\documentclass[12pt]{amsart}
\usepackage{amsmath}
\usepackage{mathtools}

\DeclarePairedDelimiter{\norm}{\lVert}{\rVert}

\usepackage{amsfonts}
\usepackage{amsthm}
\usepackage{autobreak}
\usepackage[english]{babel}
\usepackage{bold-extra}
\usepackage{hyperref}
\usepackage{comment}
\usepackage{mdwlist}
\theoremstyle{definition}
\newtheorem{definition}{Definition}
\newtheorem*{problem}{Problem}
\newtheorem*{problemQSAIP}{Problem QSAIP (Quantitative Simultaneous Approximation by Incomplete Polynomials)}
\newtheorem{remark}[definition]{Remark}
\theoremstyle{plain}
\newtheorem{prop}[definition]{Proposition}
\newtheorem{cor}[definition]{Corollary}
\newtheorem{conjecture}{Conjecture}
\newtheorem{lemma}[definition]{Lemma}
\newtheorem{theorem}[definition]{Theorem}

\newtheorem*{theoremA}{Theorem A}
\newtheorem*{theoremB}{Theorem B}
\newtheorem*{theoremC}{Theorem C}
\newtheorem*{theoremD}{Theorem D}
\newtheorem*{theoremE}{Theorem E}
\newtheorem*{theoremBW}{Bernstein-Walsh Theorem}

\newtheorem{question}[definition]{Question}

\newtheorem*{questionA}{Question A}
\newtheorem*{questionB}{Question B}
\newtheorem{example}[definition]{Example}

\numberwithin{definition}{section}
\allowdisplaybreaks

\def\D{{\mathbb{D}}}

\def\R{{\mathbb{R}}}
\def\C{{\mathbb{C}}}
\def\Q{{\mathbb{Q}}}
\def\N{{\mathbb{N}}}

\def\T{{\mathbb{T}}}
\def\UU{{\mathcal{U}}}

\def\LL{{\mathcal{L}}}

\def\EE{{\mathcal{E}}}

\def\DD{{\mathcal{D}}}

\def\PP{{\mathcal{P}}}

\def\MM{{\mathcal{M}}}

\def\dlow{\underline{d}}
\def\dup{\overline{d}}
\newcommand{\dlowg}[1]{\underline{d}_{#1}}

\numberwithin{equation}{section} 
\numberwithin{figure}{section} 
\numberwithin{table}{section} 

\newcommand{\indicatrice}[1]{\mathds{1}_{#1}}

\newcommand{\vp}{\varphi}

\usepackage{multicol}
\usepackage[T1]{fontenc}
\usepackage[utf8]{inputenc}
\usepackage[a4paper]{geometry}
\geometry{verbose,tmargin=2.5cm,bmargin=3cm,lmargin=2.5cm,rmargin=2cm}
\setcounter{secnumdepth}{5}
\usepackage{amssymb,amsmath}
\usepackage{version}
\usepackage{mathtools} 
\usepackage{enumerate}
\usepackage{dsfont}
\usepackage{mathdots}
\makeatletter

\usepackage{latexsym}
\usepackage{color}

\begin{document}
	
	\title[Incomplete polynomial approximation and frequently universal series]{Quantitative incomplete polynomial approximation and frequently universal Taylor series}
	
	\author{St\'{e}phane Charpentier, Konstantinos Maronikolakis}
	
	\address{St\'ephane Charpentier, Aix-Marseille Univ, CNRS, I2M, Marseille, France}
	\email{stephane.charpentier.1@univ-amu.fr}
	
	\address{Konstantinos Maronikolakis, Department of Mathematics, Bilkent University, 06800 Ankara, Turkey}
	\email{conmaron@gmail.com}
	
	\thanks{The research was conducted during the stay of the second author in the Institut de Mathématiques de Marseille, and was funded by the Embassy of France in Ireland through the 2024 France Excellence Research Residency grant. Konstantinos Maronikolakis also acknowledges financial support from the Irish Research Council through the grant [GOIPG/2020/1562]. The first author is partially supported by the grant ANR-24-CE40-0892-01 of the French National Research Agency
		ANR (project ComOp).}
	
	
	\keywords{Incomplete polynomial, Weighted polynomial approximation, universal Taylor series, frequently universal operators}
	\subjclass[2020]{30K15, 41A10, 30E10, 41A25}

	\begin{abstract}Let $(\tau_n)_n$ be a sequence of real numbers in $(1,+\infty)$. Using potential theoretic methods, we prove quantitative results  -- Bernstein-Walsh type theorems -- about uniform approximation by polynomials of the form $\sum_{k=\lfloor \frac{n}{\tau_n} \rfloor}^na_k z^k$, on the union of two disjoint compact sets, one containing 0 and the other not. Moreover, we reveal the interplay between the compact sets and the asymptotic behaviour of the sequence $(\tau_n)_n$. As applications of our results, we prove the existence of frequently universal Taylor series, with respect to the natural and the logarithmic densities, providing solutions to two problems posed by Mouze and Munnier.
	\end{abstract}
	
	\maketitle
	
	\section{Introduction}
	
	Approximation by incomplete polynomials constitutes a natural and intriguing variation of classical polynomial approximation, that goes back to Lorentz \cite{LorentzBook1977,LorentzBook1980}. 	Let $(\tau_n)_n$ be a sequence of real numbers with $\tau_n >1$, $n\in \N$. A complex polynomial $P$ is said to be \textit{incomplete with respect to $(\tau_n)_n$} if there exist $n\in \N$ and $a_{\lfloor \frac{n}{\tau_n}\rfloor},\ldots,a_n \in \C$ such that
	\begin{equation}\label{eq-def-inc-poly}
		P(z)=\sum_{k=\lfloor \frac{n}{\tau_n} \rfloor}^na_kz^k,
	\end{equation}
	where $\lfloor x \rfloor$ denotes the integer part of $x$. Over the past decades, incomplete polynomial approximation has emerged as a subject in its own right, driven not only by applications, notably in numerical analysis, but also by its intrinsic mathematical richness, see for example \cite{Saff1983,SaffUllmanVarga1980}. In particular, it has been successfully developed from the point of view of weighted polynomial approximation, with tools from potential theory \cite{MhaskarSaff1985,SaffTotikBookLogpotextfields,PritskerVarga1998}. In this paper, we investigate further the topic, motivated by a novel and purely theoretical application to the study of frequently universal Taylor series.
	
	Frequent hypercyclicity, introduced in \cite{BayartGrivaux2006}, is one of the central concepts in linear dynamics, see \cite{Menet2017,BayartRusza2015,GrivauxMatheron2014}. Given an infinite dimensional Fréchet space $X$, a bounded linear operator $T:X\to X$ is hypercyclic provided there exists $x\in X$, such that the orbit of $x$ under $T$ is dense in $X$, namely the sets $\left\{n\in\N:T^n x\in U\right\}$ are infinite for all non-empty open sets $U$ of $X$. Frequent hypercyclicity quantifies the size of these sets, in terms of density for subsets of the set $\N$ of all non-negative integers. Let us recall that the natural lower density of a set $E\subset \N$, that we shall denote by $\dlow(E)$, is defined by
	\begin{equation}\label{eq-def-dlow}
		\dlow(E)=\liminf_{n\to \infty}\frac{|E\cap \{0,\ldots,n\}|}{n},
	\end{equation}
	where $|E|$ is the cardinality of $E$. Thus, an operator is frequently hypercyclic whenever there exists $x\in X$ such that for any non-empty open set $U$ of $X$,
	\[
	\dlow\left(\left\{n\in\N:T^n x\in U\right\}\right)>0 .
	\]
	We refer the reader to the textbooks \cite{BayartMatheron2009,GrosseErdmannPerisManguillot2011} for a rich introduction to linear dynamics.
	
	Although frequent hypercyclicity has been the subject of systematic study for a relatively short time, research had already began around the frequent hypercyclicity properties of certain Dirichlet series, for example the Riemann $\zeta$ function, more than 40 years ago. Let $h>0$ and let $T_h$ be the vertical translation by $h$, acting from the Fréchet space of all functions $f$ holomorphic on $\{s\in \C:\,1/2< \Im(s)<1\}$ into itself, defined by $T_h(f)(\cdot)=f(\cdot + ih)$. Improving the celebrated result of Voronin on the hypercyclic properties of the $\zeta$ function for $T_h$ \cite{Voronin1975}, Reich showed in \cite{Reich1980} that, given any $\varepsilon>0$, any compact set $K \subset  \{s\in \C:\,1/2< \Im(s)<1\}$ and any zero-free function $\varphi\in A(K)$, we have
	\[
	\dlow\left(\left\{n\in \N:\, \max_{z\in K}\left|T_h^n(f)(z)-\varphi(z)\right|<\varepsilon\right\}\right)>0.
	\]
	Recent developments in this direction can be found in \cite{BayartKouroupis2024,Sourmelidis2023}. Nowadays,  the frequent hypercyclicity criterion proves to be useful for checking that an explicit operator is frequently hypercyclic, and the frequent hypercyclicity of operators from standard classes of operators (weighted shifts, composition operators,...) is well-studied, and rather well understood for weighted shifts \cite{BayartRusza2015,BonillaGrosseErdmann2007,CharpentierGrosseErdmannMenet2021}.
	
	Linear dynamics is a subfield of the broader theory of universality in operator theory, that extends the above notions to sequences of operators that are not necessarily iterates of a single one. Given two Fréchet spaces, $X$ and $Y$, and a sequence $(T_n)_n$ of bounded linear operators from $X$ to $Y$, we say that $(T_n)_n$ is universal if there exists $x\in X$ such that the generalized orbit $\left\{T_n x:\, n\in \N\right\}$ of $x$ under $(T_n)_n$ visits infinitely many times any non-empty open sets of $Y$. We refer to \cite{GrosseErdmann1999,BayartGrosseErdmanNestoridisPapadimitropoulos2008} for background on the theory of universality. Likewise, the vector $x$ is frequently universal for $(T_n)_n$ if the sets $\left\{n\in\N:T_n x\in U\right\}$ have positive lower density, for all non-empty open sets $U$ of $Y$. Criteria for frequent universality were given in \cite{BonillaGrosseErdmann2007,BonillaGrosseErdmann2009}. Amongst the numerous concrete examples of universal sequences of operators, universal Taylor series is one of them that has been thoroughly studied for 30 years.
	
	From now on, if $\Omega$ is a domain in $\C$, we shall denote by $H(\Omega)$ the Fréchet space of all holomorphic functions of $\Omega$, endowed with the topology of locally uniform convergence. We will denote the unit disc by $\D$ and the unit circle by $\T$. For a fixed compact set $K\subset \C\setminus \D$, we will denote by $C(K)$ the Banach space of all continuous functions on $K$, endowed with the supremum norm $\Vert \cdot \Vert _K$. The notation $A(K)$ will stand for the subspace of $C(K)$ consisting of the functions that are holomorphic in the interior of $K$. Then, for $n\in\N$, let $S_n^{(K)}$ denote the $n$-th partial sum operator, defined by
	\[
	S_n^{(K)}:\left\{\begin{array}{lll}
		H(\D) & \to & A(K)\\
		f=\sum _{k=0}^\infty a_kz^k & \mapsto & \sum_{k=0}^n a_kz^k.
	\end{array}\right.
	\]
	The study of the universal properties of $(S_n^{(K)})_n$ was initiated by Chui and Parnes in \cite{ChuiParnes1971} and led to the quite active theory of universal Taylor series.
	\begin{definition}[$K$-Universal Taylor series]Let $K\subset \C\setminus \D$ be compact. A function $f$ in $H(\D)$ is called a $K$-universal Taylor series if $f$ is universal for $(S_n^{(K)})_n$. We denote by $\UU(\D,K)$ the set of all such functions.
	\end{definition}
	Denoting by $\MM$ the set of all compact subsets of $\C\setminus \D$ with connected complement, a Baire argument was used by Nestoridis \cite{Nestoridis1996} to show that the set
	\[
	\UU(\D):=\bigcap_{K\in \MM}\UU(\D,K)
	\]
	is a dense $G_{\delta}$-subset of $H(\D)$. Various properties of universal Taylor series have been intensively studied, we refer to \cite{MelasNestoridisPapadoperakis1997,MelasNestoridis2001,MullerVlachouYavrian2006,Gardiner2014,GardinerKhavinson2014} for a small sample of indicative results. In 2013, Papachristodoulos initiated a study of frequently universal Taylor series \cite{Papachristodoulos2013}:
	\begin{definition}[Frequently universal Taylor series]\label{definition-intro-FUTS}A function $f$ in $H(\D)$ is called a \textit{frequently universal Taylor series} if, for any compact set $K\in \MM$ and any non-empty open set $U$ in $A(K)$,
		\[
		\dlow\left(\left\{n\in \N: \, S_n^{(K)}(f) \in U\right\}\right)>0.
		\]
		The set of all frequently universal Taylor series is denoted by $FU(\D)$.
	\end{definition}
	In \cite{Papachristodoulos2013}, the author shows that $FU(\D)$ is meagre in $H(\D)$. Later, Mouze and Munnier proved that the set $FU(\D)$ is actually empty, and extend this to more general weighted densities, that are larger than the natural one, but smaller than the logarithmic density \cite{MouzeMunnier2017,MouzeMunnier2016} (the precise definition of the logarithmic density is given in Section \ref{prelim-1-1}). Thus they ask the following question:
	\begin{questionA}[Problem 2, page 944 in \cite{MouzeMunnier2016}]\label{equt-1-intro}Do there exist functions in $H(\D)$ that are frequently universal with respect to the \textit{logarithmic density}?
	\end{questionA}
	
	The reader who is familiar with universality or linear dynamics may have observed that functions in $\UU(\D)$ are ``common'' universal functions for all sequences of operators $(S_n^{(K)})_n$, $K\in \MM$. Now, it turns out that common frequent hypercyclicity has been proven to be a very restrictive notion for most non-trivial families of operators, see for example \cite{CharpentierErnstMestiriMouze2022}. Thus it is natural to wonder whether it may be a reason that explains why $FU(\D)=\emptyset$. The proofs in \cite{MouzeMunnier2016,MouzeMunnier2017} confirm this, as they show that in order to arrive at the latter result, it is very useful to exploit the universal approximation properties of functions in $\UU(\D)$ on compact sets with non-empty interior, and diameter going to $\infty$. We mention that the non-existence of frequently universal Taylor series can also be directly derived from the useful fact that every function in $\UU(\D)$ possesses Ostrowski-gaps \cite{Charpentier2019}, a property that is also proved using approximation on ``large'' compact sets \cite{GehlenLuhMuller2000}.
	
	\medskip
	
	This leads to weakening Definition \ref{definition-intro-FUTS} in the following way:
	\begin{definition}[$K$-Frequently universal Taylor series]\label{definition-intro-K-FUTS}Let $K\subset \C\setminus \D$. A function $f$ in $H(\D)$ is a \textit{$K$-frequently universal Taylor series} if, for any non-empty open set $U$ in $A(K)$,
		\[
		\dlow\left(\left\{n\in \N: \, S_n^{(K)}(f) \in U\right\}\right)>0.
		\]
		The set of all $K$-frequently universal Taylor series is denoted by $FU(\D,K)$.
	\end{definition}
	
	Mouze and Munnier raised the following problem:
	\begin{questionB}[Problem 3, page 945 in \cite{MouzeMunnier2016}]\label{quest-2-intro}Let $K\subset \C \setminus \D$. Do there exist $K$-frequently universal Taylor series?
	\end{questionB}

	Achieving the initial motivation for this work, we give in this paper a positive answer to Question A and a partial positive answer to Question B:
	\begin{theoremA}\label{thma}There exist a dense set of functions $f$ in $H(\D)$ that are $K$-frequently universal Taylor series for any $K\in \MM$, with respect to the logarithmic density.
	\end{theoremA}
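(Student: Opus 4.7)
The plan is to build $f\in H(\D)$ by an inductive block construction, placing its non-zero Taylor coefficients on disjoint blocks $\Lambda_i=[n_i,m_i]$ separated by gaps $(m_i,n_{i+1})$ where all coefficients vanish. Fix a cofinal countable family $(\widetilde K_j)_j\subset\MM$ (every $K\in\MM$ is contained in some $\widetilde K_j$), countable dense subsets $(g_{j,k})_k$ of $A(\widetilde K_j)$, and enumerate triples $(\widetilde K_{j_i},g_{j_i,k_i},\varepsilon_i)_i$, with $\varepsilon_i\searrow 0$, so that each triple is assigned to a subset of block indices of positive natural density. The blocks will be chosen geometrically so that, for every fixed triple, the union of its associated gaps $[m_i,n_{i+1})$ has positive lower logarithmic density in $\N$.

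At step $i$, let $K:=\widetilde K_{j_i}$, let $P_{i-1}=\sum_{n<n_i}a_n z^n$ be the polynomial built so far, and set $h:=g_{j_i,k_i}-P_{i-1}|_K\in A(K)$. Since $Q_i(z):=\sum_{k=n_i}^{m_i}a_k z^k$ is an incomplete polynomial in the sense of \eqref{eq-def-inc-poly} with parameter $\tau_{m_i}=m_i/n_i$, I invoke the quantitative Bernstein-Walsh type theorem proved earlier in the paper, applied to the two disjoint compacta $K$ (bounded away from $0$) and $\{|z|\le r_i\}$ (containing $0$), with $r_i\nearrow 1$. Provided $\tau_{m_i}$ exceeds the explicit threshold supplied by that theorem (depending on $K$ and $r_i$), it yields coefficients $a_{n_i},\ldots,a_{m_i}$ satisfying
\[
\|Q_i-h\|_K<\varepsilon_i\quad\text{and}\quad \sup_{|z|\le r_i}|Q_i(z)|<2^{-i}.
\]
Setting $a_k=0$ for $k\notin\bigcup_i\Lambda_i$, the bounds $\sup_{|z|\le r_i}|Q_i|<2^{-i}$ with $r_i\nearrow 1$ force $f=\sum_k a_k z^k$ to lie in $H(\D)$, while by construction $\|S_{m_i}(f)-g_{j_i,k_i}\|_K<\varepsilon_i$; since the coefficients vanish on each gap, $S_n(f)=S_{m_i}(f)$ for $n\in[m_i,n_{i+1})$, and the approximation therefore persists throughout the gap.

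With $\log(n_{i+1}/m_i)$ chosen in a fixed ratio with $\log(m_i/n_i)$, a short logarithmic-average computation shows that, for every fixed triple, the union of the associated gaps has positive lower logarithmic density in $\N$; hence $f\in FU(\D,K)$ with respect to the logarithmic density, for every $K\in\MM$. Density of such $f$'s in $H(\D)$ follows by absorbing any preassigned polynomial into the initial data of the inductive construction. The main obstacle is the tension between two competing demands on $\tau_{m_i}=m_i/n_i$: on the one hand, the Bernstein-Walsh type theorem requires $\tau_{m_i}$ to exceed a threshold that diverges as $\widetilde K_{j_i}$ fattens up to exhaust $\C\setminus\overline{\D}$; on the other hand, positive logarithmic density constrains how rapidly $m_i/n_i$ may grow relative to the gap ratios $n_{i+1}/m_i$. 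The explicit, quantitative nature of the Bernstein-Walsh estimates proved in the paper -- combined with the inherent flexibility of the logarithmic density, which (unlike the natural density) tolerates geometrically sparse and rapidly growing block placements -- is precisely what allows these requirements to be reconciled, and therefore explains why Theorem A holds for the logarithmic density even though $FU(\D)$ is empty for the natural one.
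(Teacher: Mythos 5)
Your proposal is, in substance, the paper's own proof of Theorem A (Theorem \ref{thm-log-density-FUTS}): an inductive block construction driven by the quantitative incomplete approximation of Theorem D (Corollary \ref{cor-incomplete}) with $\tau\to\infty$, smallness on an exhaustion of $\D$ to force $f\in H(\D)$, partial sums frozen over the gaps between blocks, and positive logarithmic density of those gaps; the paper merely pre-packages the frequent sets via a combinatorial lemma (Lemma \ref{lemma-log-FHC-sets}) instead of reading them off the block placement afterwards, as you do.

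Two points in your write-up need tightening. First, assigning each triple a set $A$ of block indices of \emph{positive natural density} is not sufficient as stated: since $\tau_{m_i}=m_i/n_i\to\infty$ forces the logarithmic lengths $\log(n_{i+1}/m_i)$ to grow without bound, the worst-case ratio $\sum_{i'\in A,\,i'\le i}\log(n_{i'+1}/m_{i'})\big/\log m_{i+1}$ is a \emph{weighted} density of $A$ with unbounded weights, and a positive-density $A$ containing long runs of omitted indices can make it vanish. What you need is that the assigned index sets recur with bounded gaps (e.g.\ residue classes), which is precisely the hypothesis imposed on the sets $A_p(i)$ in Lemma \ref{lemma-log-FHC-sets}. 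Second, Corollary \ref{cor-incomplete} bounds the error by $\Vert \vp\Vert_{\overline{U}}\theta^{n}$ with $\vp=g_{j_i,k_i}-P_{i-1}$, and $\Vert P_{i-1}\Vert_{\overline{U}}$ is a priori of order $G^{m_{i-1}}$ with $G>1$ (Bernstein's lemma transferring the bound from the small disc to $\overline{U}$); obtaining $\Vert Q_i-h\Vert_K<\varepsilon_i$ therefore requires $G^{m_{i-1}}\theta^{m_i}$ to be small, i.e.\ an explicit constraint such as $m_{i-1}\le\sqrt{m_i}$, coupled with a scheduling of the radii $r_i$ so that the relevant thresholds are met --- this is the content of conditions \eqref{eq-assumpt-log-0} and \eqref{eq-assumpt-log-1} in the paper and should not be elided. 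Both repairs are compatible with your architecture, so the proposal is correct once they are made.
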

	
	\begin{theoremB}\label{thmb}For any compact set $K\subset \C\setminus \overline{\D}$, with connected complement, the set $FU(\D,K)$ is non-empty.
	\end{theoremB}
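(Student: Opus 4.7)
The plan is to construct $f \in FU(\D,K)$ directly via its Taylor coefficients, using the Bernstein-Walsh type theorem established earlier in the paper (uniform approximation on the disjoint union $K \cup \overline{\D}$ by polynomials of the form $\sum_{k=\lfloor n/\tau_n \rfloor}^n a_k z^k$) to build those coefficients in successive multiplicatively-spaced blocks. I will first fix a countable dense subset $(\phi_l)_{l \geq 1}$ of $A(K)$; by density, it suffices to show that for every $l$ and every $\varepsilon > 0$, the set $\{m \in \N : \|S_m^{(K)}(f) - \phi_l\|_K < \varepsilon\}$ has positive natural lower density.

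To arrange the block structure, I will choose a value $\tau > 1$ for which our Bernstein-Walsh theorem applies with parameter $\tau_n \to \tau$ to the pair $(K,\overline{\D})$ (the admissibility being dictated by the relative geometry of $K$ and $\overline\D$). I then pick $\alpha > 0$, set $r = \tau(1+\alpha)$, and define integer intervals $I_k = [N_k, M_k]$ with $N_k \sim r^k$ and $M_k \sim (1+\alpha) N_k$, arranged so that the support interval $[\lfloor N_k/\tau_{N_k}\rfloor, N_k]$ coincides with $(M_{k-1}, N_k]$. A labeling $\lambda : \N \to \N$ for which each $\lambda^{-1}(l)$ is syndetic in $\N$---for instance $\lambda(k) = 1 + v_2(k+1)$ with $v_2$ the $2$-adic valuation, making $\lambda^{-1}(l)$ an arithmetic progression of step $2^l$---ensures, by a direct geometric-sum computation, that $\dlow(A_l) > 0$ for every $l$, where $A_l := \bigsqcup_{\lambda(k)=l} I_k$. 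Next, I define the polynomials $Q_k$ inductively: given $Q_1,\ldots,Q_{k-1}$, I let $g_k$ be the element of $A(K \cup \overline{\D})$ equal to $\phi_{\lambda(k)} - \sum_{j<k} Q_j|_K$ on $K$ and to $0$ on $\overline{\D}$, and I apply the Bernstein-Walsh theorem to obtain $Q_k(z) = \sum_{m=M_{k-1}+1}^{N_k} c_{k,m} z^m$ with $\|Q_k - g_k\|_{K \cup \overline{\D}} \leq \delta_k$ for a chosen sequence $\delta_k \to 0$. The Taylor coefficients of $f$ are then $c_{k,m}$ for $m \in (M_{k-1}, N_k]$ and zero elsewhere; since Cauchy's inequality gives $|c_{k,m}| \leq \|Q_k\|_{\overline{\D}} \leq \delta_k$, this bound forces $f \in H(\D)$.

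To verify the visit property, I observe that for $m \in I_k$ the polynomials $Q_1,\ldots,Q_k$ have supports contained in $[1,N_k] \subset [1,m]$, while the coefficients of $f$ on $(N_k,m]$ vanish, so $S_m^{(K)}(f) = \sum_{j \leq k} Q_j$. By construction $\|S_m^{(K)}(f) - \phi_{\lambda(k)}\|_K = \|Q_k - g_k\|_K \leq \delta_k$, whence for any $l$ and any $\varepsilon > 0$ the visit set contains $\bigsqcup_{k \geq k_0,\, \lambda(k)=l} I_k$ with $k_0$ chosen so that $\delta_k < \varepsilon$ for $k \geq k_0$, and has the same positive lower density as $A_l$. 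The hard part will be the iterative invocation of the Bernstein-Walsh theorem on the auxiliary functions $g_k$, whose norms on $K$ may grow as $l$ varies along $\lambda$: we need simultaneous control of the error on $K$ (forced to $0$) and of the size on $\overline{\D}$ (keeping the Taylor coefficients of $f$ bounded), and this is precisely what the quantitative potential-theoretic Bernstein-Walsh machinery developed in the earlier sections is designed to provide.
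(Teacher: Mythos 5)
Your construction is, in outline, the same as the paper's: write $f=\sum_k Q_k$ where each $Q_k$ is an incomplete polynomial produced by the quantitative simultaneous approximation theorem (Theorem \hyperref[thmc]{C} / Corollary \ref{cor-incomplete-1}) for the pair $(K,\overline{\D})$, with blocks supported on multiplicatively separated index intervals so that each target $\phi_l$ is revisited along a set of positive natural lower density; your explicit intervals $I_k$ with the $2$-adic labelling play exactly the role of the sets $E_p$ of Lemma \ref{lemma-FHC-sets}.

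The genuine gap is the step you defer as ``the hard part'', and it is not a routine invocation of the machinery. The error bound in Theorem \hyperref[thmc]{C} reads $\Vert Q_k-g_k\Vert_{K}\le\Vert g_k\Vert_{\overline U}\,\theta^{N_k}$, where $U$ is a fixed open neighbourhood of $K$; so your $\delta_k$ is \emph{not} a freely chosen sequence tending to $0$, it is forced to equal $\Vert g_k\Vert_{\overline U}\theta^{N_k}$. Moreover the difficulty is not the growth of $\Vert g_k\Vert_K$ (which the induction keeps under control, up to the slowly growing norms of the $\phi_l$), but the growth of $\Vert\sum_{j<k}Q_j\Vert_{\overline U}$: the only a priori smallness of the earlier blocks away from $K$ is on $\overline{\D}$, and transferring it to $\overline U$ via Bernstein's lemma costs a factor $G_U^{N_{k-1}}$ with $G_U=\max_{z\in\overline U}e^{g_{\overline{\C}\setminus\overline{\D}}(z,\infty)}>1$. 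Hence $\delta_k\lesssim G_U^{N_{k-1}}\theta^{N_k}$, and since $N_{k-1}\le N_k/\tau$ this tends to $0$ only if $G_U^{1/\tau}\theta<1$ --- an extra constraint on $\tau$ beyond admissibility for $(K,\overline{\D})$, and exactly the paper's condition \eqref{eq-assumpt-tau-FHC}. Justifying that such a $\tau$ exists requires returning to the finer statement (Theorem \ref{thm-BW-incomplete}) to see how $\theta$ depends on $\tau$: as $\tau\to\infty$ the bound degrades only through the factor $\left(M_{K\cup L}/(m_K/2)\right)^{1/\tau}\to1$ while the essential decay rate stays bounded away from $1$, so enlarging $\tau$ works. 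Once you insert this Bernstein-lemma estimate and the enlarged choice of $\tau$, your induction closes; without it, the claimed bound $\Vert Q_k-g_k\Vert_{K\cup\overline{\D}}\le\delta_k$ with $\delta_k\to0$ is unjustified.
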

	
	Theorem \hyperref[thmb]{B} is only a partial answer to Question B since it does not cover the case where $K$ intersects the unit circle. So far we are not able to treat this case.
	
	\medskip
	
	Let us comment on the ideas of the proofs, as they lead to the topic of approximation by incomplete polynomials. The usual way to build a universal Taylor series $f$ in $H(\D)$ is to write $f$ as a series $\sum_n P_n$ of polynomials $P_n$, that are inductively chosen with good \textit{simultaneous} approximation properties. More precisely, fix a compact set $K \in \MM$ and a countable basis $(U_n)_n$ of the topology of $A(K)$. The $P_n$'s are chosen of the form $\sum_{v_n}^{d_n}a_kz^k$, with coefficients that do not overlap (that is, $v_n>d_{n-1}$) in such a way that $P_n$ is small in $H(\D)$ and $P_0 +\ldots +P_n$ belongs to $U_n$. Then, for any $n \in \N$, $S_{d_n}^{(K)}(f)=P_0+\ldots + P_n \in U_n$. The existence of the polynomials $P_n$ follows from Mergelyan's Theorem which states that any function in $A(K)$ can be uniformly approximated on $K$ by polynomials. A suitable application of Mergelyan's Theorem can also give us the condition $v_n>d_{n-1}$. Now, in order to ensure that the partial sums $S_k^{(K)}(f)$, $k\in \N$, visit frequently every non-empty open set $U_n$,  the idea is to assign to each $U_n$ a subset $E_n$ of $\N$, with positive density, such that, for any $k\in E_n$, $S_k^{(K)}(f) $ belongs to $U_n$. In order to accomplish this, it is basically enough to be able, given any pair $(E_n,E_m)$ with $n\neq m$ and any $(k,l)\in E_n\times  E_m$ with $k>l$, to find a polynomial $P$ of the form $\sum_{i=l+1}^ka_iz^i$ that lies in $U_n$ and is small in $H(\D)$. This is where quantitative approximation by incomplete polynomials comes into play.
	
	\medskip
	
	Thus the following independent and interesting problem on complex polynomial approximation arises:
	
	\begin{problemQSAIP}\label{qsaip}Given two disjoint compact sets $K\subset \C\setminus \{0\}$ and $L \subset \C$, with connected complement,  find a description of the sequences $(\tau_n)_n$ of real numbers in $(1,+\infty)$ such that any function in $A(K)$ can be uniformly approximated on $K$ by polynomials of the form $\sum_{k=\lfloor \frac{n}{\tau_n}\rfloor }^n a_kz^k$, that simultaneously approximate $0$ uniformly on $L$. Moreover, find a quantitative approximation result of this kind.
	\end{problemQSAIP}
	
	It is worth mentioning that, in Problem \hyperref[qsaip]{QSAIP}, the compact set $L$ may contain $0$, which is the most interesting case. The less restrictive problem of approximation by incomplete polynomials only on $K$ (not considering simultaneous approximation around $0$) has attracted much attention, especially when $K$ is an interval of the real line \cite{LorentzBook1977,LorentzBook1980,SaffVarga1978,SaffVarga1979,Golitschek1980,LorentzolitschekMakovoz1996}. In the complex plane, this has been successfully tackled from the point of view of weighted polynomial approximation, using advanced tools from potential theory \cite{PritskerVarga1998,Kuijlaars1996,SaffTotikBookLogpotextfields,TotikVarju2007}; see Section \ref{wpa} for the presentation of a very nice result of Pritsker and Varga \cite{PritskerVarga1998}.
	
	The first part of this paper is thus devoted to investigating solutions to Problem \hyperref[qsaip]{QSAIP}. This part can be read independently from the one dealing with frequently universal Taylor series. Thanks to a certain refinement of the classical Bernstein-Walsh Theorem \cite{Ransford1995}, and to variants of classical results from potential theory, we are able to prove the following three results. From now on, and without possible confusion, we shall simply write $\sum_{k=\frac{n}{\tau_n}}^na_kz^k$ instead of $\sum_{k=\lfloor \frac{n}{\tau_n} \rfloor}^na_kz^k$, where $\lfloor x \rfloor =\max\{n\in \N: n\leq x\}$, $x\in [0,+\infty)$. With the notations used in the statement of Problem \hyperref[qsaip]{QSAIP}, we will first consider the case where the sequence $(\tau_n)_n$ is constant:
	
	\begin{theoremC}\label{thmc}Let $K\subset \C\setminus \{0\}$ and $L\subset \C$ be two disjoint compact sets with connected complement. Let also $U$ be a bounded open set containing $K$. Then there exist $\tau \geq 1$, $\theta\in (0,1)$ and $N\in \N$ such that for any $\vp \in A(\overline{U})$ and any $n \geq N$, there exists a polynomial $P_n$ of the form $\sum_{k= n/\tau}^{n} a_k z^k$ such that
		\[
		\Vert P_n \Vert _L \leq \Vert \vp \Vert _{\overline{U}}\theta^{n}\quad \text{and}\quad \Vert P_n - \vp \Vert _K \leq \Vert \vp \Vert _{\overline{U}}\theta^{n}.
		\]
	\end{theoremC}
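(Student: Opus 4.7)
The strategy is to factor $P_n(z)=z^m R_{n-m}(z)$ with $m=\lfloor n/\tau\rfloor$, which automatically makes $P_n$ incomplete of the required form. Since $0\notin K$, the two bounds $\|P_n-\vp\|_K\leq\e$ and $\|P_n\|_L\leq\e$ reduce to a single polynomial approximation problem: find $R_{n-m}$ of degree at most $n-m$ which is uniformly close to the function $\tilde f_m$ on $K\cup L$, where $\tilde f_m$ is defined to be $\vp/z^m$ on a small open neighborhood of $K$ and $0$ on a small open neighborhood of $L$.

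The first step is geometric. I verify that $K\cup L$ itself has connected complement: any bounded component of $\C\setminus(K\cup L)$ would be enclosed by a Jordan curve $\gamma\subset K\cup L$, but then the disjoint decomposition $\gamma=(\gamma\cap K)\cup(\gamma\cap L)$ of the connected set $\gamma$ into two closed subsets would force $\gamma\subset K$ or $\gamma\subset L$, contradicting the hypothesis that both $K$ and $L$ have connected complement. By replacing $U$ with a slightly smaller open neighborhood of $K$ --- which only decreases $\|\vp\|_{\overline{U}}$ --- I may assume $\overline{U}\cap(L\cup\{0\})=\emptyset$. Let $g$ denote the Green's function of $\C\setminus(K\cup L)$ with pole at $\infty$, set $\Omega_\rho=\{g<\log\rho\}$, and choose $\rho>1$ close enough to $1$ that $\Omega_\rho$ splits into two disjoint components $\Omega_\rho^K\supset K$ and $\Omega_\rho^L\supset L$, with $\overline{\Omega_\rho^K}\subset U$; both conditions follow from continuity of $g$, compactness of $K$ and $L$ together with their positive separation, and the openness of $U$. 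Set $r=\min_{\overline{\Omega_\rho^K}}|z|>0$ and $M=\max_{K\cup L}|z|$.

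The central step is the Bernstein-Walsh theorem on $K\cup L$ applied to $\tilde f_m$, which is holomorphic on $\Omega_\rho$ since its two components are disjoint. It yields a constant $C$ depending only on $K\cup L$ and $\rho$ such that, for every $d\in\N$, some polynomial $R_d$ of degree $\leq d$ satisfies
\[
\|R_d-\tilde f_m\|_{K\cup L}\leq C\rho^{-d}\|\tilde f_m\|_{\overline{\Omega_\rho}}\leq Cr^{-m}\rho^{-d}\|\vp\|_{\overline{U}}.
\]
Setting $P_n(z)=z^m R_{n-m}(z)$ and using $|z|\leq M$ on $K\cup L$, both $\|P_n-\vp\|_K$ and $\|P_n\|_L$ are bounded by
\[
CM^m r^{-m}\rho^{-(n-m)}\|\vp\|_{\overline{U}}\leq C\bigl[(M/r)^{1/\tau}\rho^{-(1-1/\tau)}\bigr]^n\|\vp\|_{\overline{U}}.
\]

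Any $\tau>1+\log(M/r)/\log\rho$ makes the bracket strictly less than $1$; choosing $\theta$ in the resulting interval and $N$ large enough to absorb the constant $C$ finishes the proof. The main obstacle will be the quantitative Bernstein-Walsh bound on the union $K\cup L$ with constant $C$ independent of the function $\tilde f_m$ (and hence of $m$); this is precisely the refinement of the classical Bernstein-Walsh theorem announced in the introduction. A secondary technical subtlety is the splitting of $\Omega_\rho$ into two disjoint components, which relies on a careful analysis of the Green's function of a union of disjoint compacta.
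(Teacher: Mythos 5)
Your proposal is correct and follows essentially the same route as the paper: the paper's Theorem \ref{thm-BW-incomplete} is exactly your quantitative Bernstein--Walsh step on $K\cup L$ applied to the function equal to $\vp/z^{\lfloor n/\tau\rfloor}$ near $K$ and $0$ near $L$, with the factor $(M_{K\cup L}/(m_K/2))^{n/\tau}$ playing the role of your $(M/r)^{m}$, after which Theorem C is obtained by taking $\tau$ large enough, just as you do. The only (harmless) imprecision is that the Fekete--Hermite construction yields a rate $(G\theta_d)^d$ with $\theta_d\downarrow 1$ rather than a clean $C\rho^{-d}$ valid for all $d$, which is absorbed by taking $N$ large --- precisely the ``careful tracking of constants'' the paper carries out.
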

	
	It will appear from the proof how the constant $\tau$ depends on $K$ and $L$. The next theorem corresponds to the case where $\tau_n \to +\infty$.
	
	\begin{theoremD}\label{thmd}Let $(\tau_n)_n$ be a sequence of positive real numbers with $\tau _n \to +\infty$. For any disjoint compact sets $K\subset \C\setminus \{0\}$ and $L\subset \C$ , with connected complement, and for any bounded open set $U$ containing $K$, there exist $\theta\in (0,1)$ and $N\in \N$ such that, for any $\vp \in A(\overline{U})$ and any $n \geq N$, there exists a polynomial $P_n$ of the form $\sum_{k=n/\tau_n}^{n}a_kz^k$ such that
		\[
		\Vert P_n \Vert _L \leq \Vert \vp \Vert _{\overline{U}}\theta^{n}\quad \text{and}\quad \Vert P_n - \vp\Vert _K \leq \Vert \vp \Vert _{\overline{U}}\theta^{n}.
		\]
	\end{theoremD}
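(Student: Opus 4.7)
My plan is to derive Theorem \hyperref[thmd]{D} directly from Theorem \hyperref[thmc]{C} by a simple monotonicity argument, exploiting the fact that enlarging the gap parameter only restricts the set of incomplete polynomials, so that a polynomial with spectrum in $[\lfloor n/\tau \rfloor, n]$ is automatically of the form $\sum_{k=\lfloor n/\tau_n\rfloor}^n a_k z^k$ whenever $\tau_n \geq \tau$.

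More precisely, first I would apply Theorem \hyperref[thmc]{C} to the given data $K$, $L$, $U$. This produces a constant $\tau \geq 1$ (depending only on $K$ and $L$), a number $\theta \in (0,1)$, and an integer $N_0 \in \N$ with the following property: for every $\vp \in A(\overline{U})$ and every $n \geq N_0$ there exists a polynomial $Q_n = \sum_{k=\lfloor n/\tau\rfloor}^n a_k z^k$ satisfying
\[
\Vert Q_n \Vert_L \leq \Vert \vp \Vert_{\overline{U}} \theta^n \quad \text{and} \quad \Vert Q_n - \vp \Vert_K \leq \Vert \vp \Vert_{\overline{U}} \theta^n.
\]

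Next, since $\tau_n \to +\infty$, there exists $N_1 \in \N$ such that $\tau_n \geq \tau$ for all $n \geq N_1$. For such $n$ we have $n/\tau_n \leq n/\tau$, hence $\lfloor n/\tau_n \rfloor \leq \lfloor n/\tau \rfloor$. I would then set $N = \max(N_0, N_1)$ and, for every $n \geq N$, define $P_n := Q_n$, rewriting
\[
P_n = \sum_{k=\lfloor n/\tau_n\rfloor}^n b_k z^k, \qquad b_k = \begin{cases} 0 & \text{if } \lfloor n/\tau_n\rfloor \leq k < \lfloor n/\tau\rfloor, \\ a_k & \text{if } \lfloor n/\tau\rfloor \leq k \leq n. \end{cases}
\]
This $P_n$ has the required spectrum and inherits the two desired estimates from Theorem \hyperref[thmc]{C}. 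Thus the same $\theta$ and the integer $N$ serve to prove Theorem \hyperref[thmd]{D}.

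There is really no substantive obstacle here once Theorem \hyperref[thmc]{C} is in hand: the hypothesis $\tau_n \to +\infty$ makes the incomplete-polynomial constraint less restrictive than in the constant-$\tau$ case, so the geometric rate $\theta^n$ transfers for free. The only mild subtlety is to notice that the direction of the inequality goes the right way, $\tau_n \geq \tau \Rightarrow \lfloor n/\tau_n\rfloor \leq \lfloor n/\tau\rfloor$, so that padding with zero coefficients is legitimate. Note also that this argument gives no improvement on $\theta$ using the divergence rate of $(\tau_n)_n$; obtaining a sharper $\theta$ depending on the speed at which $\tau_n \to +\infty$ would require revisiting the potential-theoretic estimates behind Theorem \hyperref[thmc]{C} directly, but this is not needed for the statement of Theorem \hyperref[thmd]{D}.
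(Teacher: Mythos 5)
Your proof is correct. The one point that needs checking --- that a polynomial with valuation at least $\lfloor n/\tau\rfloor$ is \emph{a fortiori} of the form $\sum_{k=\lfloor n/\tau_n\rfloor}^{n}a_kz^k$ once $\tau_n\geq\tau$, because then $\lfloor n/\tau_n\rfloor\leq\lfloor n/\tau\rfloor$ --- is exactly the monotonicity you isolate, and the inequality does go the right way. Your route differs mildly from the paper's: the authors do not deduce Theorem \hyperref[thmd]{D} from Theorem \hyperref[thmc]{C}, but derive both in parallel from Theorem \ref{thm-BW-incomplete}, whose error bound
\[
C\Vert \vp \Vert_{\overline{U}}\left(\frac{M_{K\cup L}}{m_K/2}\right)^{\frac{n}{\tau}}\left(G\theta_{n-\lfloor \frac{n}{\tau}\rfloor}\right)^{n-\lfloor \frac{n}{\tau}\rfloor}
\]
is uniform in $\tau>1$; for Theorem \hyperref[thmd]{D} one takes $\tau=\tau_n$ and uses $\tau_n\to\infty$ to control the two factors. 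Both arguments rest on the same potential-theoretic estimate, so nothing substantive is gained or lost. Your reduction is the more economical way to organize the deduction, while the paper's formulation keeps explicit how the bound depends on $\tau_n$ --- which, as you yourself note, could in principle yield a faster rate for rapidly divergent $(\tau_n)_n$, though the stated theorem does not require this.
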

	
	In our third result, we are interested in the case where $\tau_n \to 1$. In this situation, we have to assume that $L=\emptyset$ (see Section \ref{pot} for the definition of a polar set).
	
	\begin{theoremE}\label{thme}Let $K\subset \C\setminus \{0\}$ be a polar compact set and let $U$ be a bounded open set containing $K$. There exist a sequence $(\tau_n)_n$ in $(1,\infty)$ with $\tau_n\to1$, a sequence $(\eta_n)_n$ of positive real numbers with $\eta_n\to 0$, and $N\in \N$, such that for any $\vp\in A(\overline{U})$ and any $n\geq N$, there exists a polynomial $P_n$ of the form $\sum_{k= n/\tau_n}^{n}a_kz^k$ such that
		\[
		\Vert P_n-\vp\Vert_K\leq \Vert \vp\Vert _{\overline{U}}\eta_n^{n}.
		\]
	\end{theoremE}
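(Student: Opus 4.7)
The plan is to reduce the problem of approximating $\vp$ on $K$ by incomplete polynomials to the more standard problem of approximating the auxiliary function $\psi_n(z) := z^{-m_n}\vp(z)$ on $K$ by arbitrary polynomials of degree at most $n-m_n$, where $m_n := \lfloor n/\tau_n\rfloor$. Multiplying such an approximation by the monomial $z^{m_n}$ produces a polynomial of the form $\sum_{k=m_n}^n a_k z^k$, as required. The crucial advantage of the polarity of $K$ is that $\psi_n$ can be approximated on $K$ at a rate faster than any geometric one, and this super-exponential decay will compensate both for the amplification by $z^{m_n}$ and for the large sup-norm of $\psi_n$ near $0$, even as $\tau_n \to 1$.

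Concretely, I would fix an open neighborhood $V$ of $K$ with $\overline{V} \subset U$ and $0 \notin \overline{V}$, and set $r := \min_{z\in\overline{V}}|z|>0$ and $R := \max_{z\in K}|z|$. Applying the refined Bernstein-Walsh theorem for polar compacts (the main potential-theoretic tool of the paper, cf.\ \cite{Ransford1995}) provides a sequence $(\delta_d)_d$ with $\delta_d \to 0$ and an integer $d_0$ such that, for every $g \in A(\overline{V})$ and every $d \geq d_0$, there exists a polynomial $\Pi_d$ of degree at most $d$ satisfying $\Vert \Pi_d - g \Vert_K \leq \Vert g \Vert_{\overline{V}}\,\delta_d^d$. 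Given $\vp \in A(\overline{U})$, the function $\psi_n$ belongs to $A(\overline{V})$ with $\Vert \psi_n \Vert_{\overline{V}} \leq \Vert \vp \Vert_{\overline{U}}\,r^{-m_n}$. Applying the refined theorem to $g = \psi_n$ with $d = d_n := n - m_n$ yields a polynomial $Q_n$ of degree at most $d_n$ with $\Vert Q_n - \psi_n \Vert_K \leq \Vert \vp \Vert_{\overline{U}}\, r^{-m_n}\delta_{d_n}^{d_n}$. Setting $P_n(z) := z^{m_n} Q_n(z)$, which is an incomplete polynomial of the required form, and using $|z|^{m_n} \leq R^{m_n}$ on $K$, gives
\[
\Vert P_n - \vp \Vert_K \leq \Vert \vp \Vert_{\overline{U}} (R/r)^{m_n}\,\delta_{d_n}^{d_n}.
\]

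It remains to choose $\tau_n \to 1$ so that the right-hand side takes the form $\Vert \vp \Vert_{\overline{U}}\,\eta_n^{n}$ with $\eta_n \to 0$, which after extracting $n$-th roots amounts to $(R/r)^{m_n/n}\,\delta_{d_n}^{d_n/n} \to 0$. Since $(R/r)^{m_n/n}$ stays bounded, everything reduces to arranging $(d_n/n)|\log \delta_{d_n}| \to \infty$. This is the main obstacle: $\tau_n \to 1$ forces $d_n/n \to 0$, yet $d_n/n$ must not vanish too fast compared to the (a priori unspecified) growth of $|\log \delta_d|$. Since $|\log \delta_d| \to \infty$ by the polarity of $K$, a diagonal argument yields a sequence of positive integers $d_n \to \infty$ satisfying simultaneously $d_n/n \to 0$ and $d_n |\log \delta_{d_n}|/n \to \infty$. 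Setting $m_n := n - d_n$, $\tau_n := n/m_n$, and $\eta_n := (R/r)^{m_n/n}\delta_{d_n}^{d_n/n}$ then ensures $\tau_n > 1$, $\tau_n \to 1$, and $\eta_n \to 0$; the displayed bound above becomes $\Vert \vp \Vert_{\overline{U}}\,\eta_n^{n}$ for all $n \geq N$, where $N$ is chosen so that $d_n \geq d_0$ and $m_n \geq 1$.
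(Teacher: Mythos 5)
Your overall architecture is essentially the paper's: the reduction $\psi_n=z^{-m_n}\vp$, approximation of $\psi_n$ by a polynomial $Q_n$ of degree at most $d_n=n-m_n$, and the shift $P_n=z^{m_n}Q_n$ is exactly the mechanism inside the paper's Theorem \ref{thm-BW-incomplete} (there the auxiliary function is $h_n=\vp/z^{\lfloor n/\tau\rfloor}$ and the factor $(R/r)^{m_n}$ appears as $(M_{K\cup L}/(m_K/2))^{n/\tau}$), and your final balancing, i.e.\ choosing $d_n$ with $d_n/n\to0$ and $d_n\abs{\log\delta_{d_n}}/n\to\infty$, corresponds to the paper's explicit choice of $\tau_n$. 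The genuine gap is the step you treat as a citation. The ``refined Bernstein--Walsh theorem'' in the uniform form you invoke --- a \emph{single} sequence $\delta_d\to0$ and a \emph{single} $d_0$ such that \emph{every} $g\in A(\overline V)$ admits, for every $d\geq d_0$, a polynomial of degree at most $d$ within $\Vert g\Vert_{\overline V}\,\delta_d^{\,d}$ of it on $K$ --- is not the statement in \cite{Ransford1995}. The classical polar case of Bernstein--Walsh asserts only that for each \emph{fixed} $f$ one has $\limsup_d\bigl(\inf_{\deg P\leq d}\Vert P-f\Vert_K\bigr)^{1/d}=0$, with no effective rate and no uniformity in $f$. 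Uniformity is not a cosmetic issue here: your $g=\psi_n$ changes with $n$ and $\Vert\psi_n\Vert_{\overline V}$ grows like $r^{-m_n}$, i.e.\ geometrically in $n$, so a function-dependent threshold or rate would destroy the argument.

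Proving that uniform estimate is precisely the technical core of the paper's proof. Running the Fekete-polynomial construction on the shrinking neighbourhoods $K_d:=K^{1/d}$, the factor $\max_\Gamma e^{-g_{\overline{\C}\setminus K_d}(\cdot,\infty)}$ does tend to $0$ by polarity, but one must also control the factor $\bigl(\delta_d(K_d)/c(K_d)\bigr)^{d\cdot(\cdots)}$ along the diagonal where the compact set and the degree vary \emph{simultaneously} --- Fekete--Szeg\"o for a fixed compact set does not cover this, and it is exactly what the paper's Lemmas \ref{dilation} and \ref{conv} are designed to do. (A softer alternative that would also fill your gap: for each \emph{fixed} $\rho>0$ the quantitative non-polar Bernstein--Walsh bound is linear in $\Vert g\Vert$ and hence already uniform in $g$, with ratio roughly $\max_\Gamma e^{-g_{\overline{\C}\setminus K^{\rho}}(\cdot,\infty)}$; one can then diagonalize over a sequence $\rho_k\downarrow0$, letting $\rho$ depend on $d$ only through a slowly increasing sequence of thresholds.) Your concluding diagonal extraction of $d_n$ is fine once $(\delta_d)_d$ exists, so the proposal is repairable; but as written it assumes its hardest ingredient.
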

	
	We mention that Theorem \hyperref[thmd]{D} is a slightly more precise reformulation of a result by Costakis and Tsirivas \cite{CostakisTsirivas2014}. Theorem \hyperref[thmc]{C} with $L=\emptyset$ was obtained in \cite{CharpentierLevenbergWielonsky2024}, using the approach of Pritsker and Varga in \cite{PritskerVarga1998}. We will see that, to some extent, these theorems are sharp. For example, we will see that if a compact set supports approximation by polynomials of the form $\sum_{k=n/\tau_n}^na_kz^k$, with $\tau_n \to 1$, then $K$ must have empty interior.
	
	As an application, Theorem \hyperref[thmc]{C} will be used to prove Theorem \hyperref[thmb]{B}, and Theorem \hyperref[thmd]{D} to prove Theorem \hyperref[thma]{A}. We will discuss how Theorem \hyperref[thme]{E} could be a first step towards building frequently universal Taylor series on polar sets, with respect to weighted densities that are strictly smaller than the natural one. Finally, let us say that quantitative incomplete polynomial approximation has been used earlier in the construction of universal Taylor series. Indeed, a result similar to Theorem \hyperref[thmd]{D} was used by Costakis and Tsirivas to build so-called doubly universal Taylor series \cite{CostakisTsirivas2014}, see also \cite{Mouze2018,Charpentier2019,ChatzigiannakidouVlachou2016}. In \cite{CharpentierMaronikolakisII2025}, we apply the above-mentioned result of Pritsker and Varga \cite{PritskerVarga1998}, in order to tackle some other open problems about universal Taylor series.
	
	\medskip
	
	The paper is organized as follows. Section \ref{sec-2}, devoted to approximation by incomplete polynomials, is divided into 4 subsections: in the first one, we recall the basic ingredients and the main tools from potential theory that will be useful to us; the second one is dedicated to introducing the topic of approximation by incomplete polynomials, and to proving Theorems \hyperref[thmc]{C} and \hyperref[thmd]{D}. In the third subsection, we prove Theorem \hyperref[thme]{E}. We present some open problems related to the topic in the fourth subsection. Section \ref{sec-FUTS} deals with applying the results of Section \ref{sec-2} to frequently universal Taylor series. The notion of weighted density is first introduced, then Theorems \hyperref[thma]{A} and \hyperref[thmb]{B} are proven. In a final subsection, problems for further research on frequently universal Taylor series are discussed.

	\section{Approximation by incomplete polynomials}\label{sec-2}
	\subsection{Tools from potential theory}\label{pot}
	Potential theory plays a crucial role in the development of approximation theory. Thus, in this section, we give some preliminary potential theoretic notions that will be used throughout the paper. We follow the textbook of Ransford \cite{Ransford1995} for the notation and definitions.
	
	First, let us state the Harnack inequalities for a general domain, which is a crucial property of positive harmonic functions:
	\begin{theorem}[Harnack inequalities]
		Let $D$ be a domain in $\C$. Then, for any $z,w\in D$, there exists a positive number $d$ such that, for any positive harmonic function $h$ on $D$ we have
		$$\frac{1}{d}\cdot h(w)\leq h(z)\leq d\cdot h(w).$$
	\end{theorem}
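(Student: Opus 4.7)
The plan is to prove this in two stages. First, establish Harnack's inequality on a single disk using the Poisson integral formula; then bootstrap to an arbitrary domain by a chaining argument along a path joining $z$ to $w$.

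For the local step, consider a positive harmonic function $h$ defined on a neighbourhood of the closed disk $\overline{B(a,R)}$. The Poisson integral representation writes $h(z)$ as the boundary integral of $h$ against the Poisson kernel
\[
P(z,\theta)=\frac{R^2-|z-a|^2}{|Re^{i\theta}-(z-a)|^2}.
\]
For $|z-a|=r<R$ this kernel is squeezed between $(R-r)/(R+r)$ and $(R+r)/(R-r)$, and since $h\ge 0$, combining this with the mean-value identity $h(a)=\frac{1}{2\pi}\int_0^{2\pi}h(a+Re^{i\theta})\,d\theta$ gives
\[
\frac{R-r}{R+r}\,h(a)\le h(z)\le \frac{R+r}{R-r}\,h(a).
\]
In particular, on any disk $B(a,R/2)$ contained in $D$, every pair of points is comparable to $h(a)$, and hence to each other, with a multiplicative constant (for instance, $9$) that is independent of $h$.

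To globalise, fix $z,w\in D$ and, using connectedness of the open set $D$, join them by a compact path $\gamma\subset D$. By compactness, $\delta:=\dist(\gamma,\partial D)>0$, so $B(p,\delta)\subset D$ for every $p\in\gamma$. Cover $\gamma$ with finitely many balls, say $N$ of them, of the form $B(p_i,\delta/2)$, with $p_1=z$, $p_N=w$, arranged consecutively so that $p_{i+1}\in B(p_i,\delta/2)$. Applying the local estimate on each enlarged ball $B(p_i,\delta)\subset D$ yields a fixed constant $C$ (coming only from the Poisson kernel bound) with $h(p_{i+1})\le C\,h(p_i)$. Iterating gives $h(w)\le C^{N-1}h(z)$, and the reverse inequality follows by symmetry; the claim then holds with $d=C^{N-1}$.

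The only subtle point -- and essentially the entire content of the argument -- is the uniformity of $d$ over all positive harmonic functions $h$. Since the local constant $C$ depends only on the ratio of radii used in the Poisson kernel bound, and the number $N$ of overlapping balls depends only on $z$, $w$, and the geometry of $D$ (through the chosen path and its distance to $\partial D$), the cumulative constant $d$ is genuinely independent of $h$, which is precisely what the theorem asserts.
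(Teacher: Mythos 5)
Your proof is correct and is the standard argument; the paper states this theorem without proof as a classical fact from Ransford's textbook, where it is established in essentially the same way (the Poisson-kernel estimate on a disk, followed by a chaining argument showing the Harnack distance is finite on a connected domain). The only point worth polishing is that the Poisson formula should be applied on a ball of radius strictly less than $\delta$ (so that $h$ is continuous on the closed ball), which changes nothing in the constants.
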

	For $D,z,w$ as above, we call the \emph{Harnack distance} of $z$ and $w$ the smallest number $d$ as in the previous theorem. We denote the Harnack distance of $z$ and $w$ by $d_D(z,w)$.
	
	Next, we give the standard definitions of classical (logarithmic) potential theory. Let $\mu$ be a finite Borel measure on $\C$ with compact support. Then, the function $p_{\mu}:\C\to[-\infty,\infty)$ with
	$$p_{\mu}(z)=\int\log|z-w|d\mu(w),z\in\C$$
	is called the \emph{(logarithmic) potential} of $\mu$. Potentials are subharmonic functions on $\C$.
	\begin{definition}[Energy]
		The \emph{energy} of a finite Borel measure $\mu$ on $\C$ is defined as the quantity
		$$I(\mu):=\int p_{\mu}(z)d\mu(z)=\int\int\log|z-w|d\mu(w)d\mu(z).$$
	\end{definition} 
	Further, let us fix a compact set $K$ in $\C$. A probability measure $\nu$ supported on $K$ is called an \emph{equilibrium measure} for $K$, if $I(\nu)=\sup_{\mu} I(\mu)$ where the supremum is taken over all probability measures $\mu$ supported on $K$. It can be shown that an equilibrium measure for $K$ always exists and is supported on $\partial K$.
	\begin{definition}[Capacity]
		The \emph{(logarithmic) capacity} of a compact set $K$ in $\C$ is defined as $c(K):=e^{I(\nu)}$, where $\nu$ is an equilibrium measure for $K$.
	\end{definition}
	We note that, for $K$ and $\nu$ as above, the case $I(\nu)=-\infty$ is possible, which is equivalent to saying that any probability measure supported on $K$ has energy equal to $-\infty$. We call these sets \emph{polar}. Clearly, this is equivalent to $c(K)=0$.
	
	The capacity of a compact set can also be understood through the notion of the transfinite diameter.
	\begin{definition}
		Let $K$ be a compact set in $\C$ and $n\geq2$. The \emph{$n$-th diameter} of $K$ is defined as
		$$\delta_n(K):=\sup\left\{\prod_{1\leq j<k\leq n}|w_j-w_k|^{\frac{2}{n(n-1)}}:w_1,\dots,w_n\in K\right\}.$$
		Also, we call an $n$-tuple $(w_1,\dots,w_n)\in K^n$ for which the supremum is attained a \emph{Fekete $n$-tuple} for $K$.
	\end{definition}
	It can be shown that, for any compact set $K$ in $\C$, the sequence $(\delta_n(K))_n$ is decreasing and so converges to a number called the transfinite diameter. It turns out that the transfinite diameter is equal to the capacity, that is, $\lim_{n\to\infty}\delta_n(K)=c(K)$; this is the Fekete-Szeg\"o Theorem.
	
	We now turn our attention to the Dirichlet problem for a domain $D$; that is, for a given continuous function $\phi:\partial D\to\R$, find a harmonic function $h$ on $D$ such that $\lim_{z\to\zeta}h(z)=\phi(\zeta)$ for all $\zeta\in\partial D$. A domain $D$ for which the Dirichlet problem is solvable for any continuous boundary data $\phi$ is called a \emph{regular domain}. As an easy example, we can see that $\D$ is a regular domain. Moreover, if $\phi:\T\to\R$ is a continuous function, then the function $\D\ni z\mapsto\frac{1}{2\pi}\int_{0}^{2\pi}\phi(e^{i\theta})P(z,e^{i\theta})d\theta\in\R$ solves the corresponding Dirichlet problem, where $P$ denotes the Poisson kernel. Roughly speaking, the solution to the Dirichlet problem is given by integrating the boundary data $\phi$ with respect to an appropriate measure. This leads us to the following definition for regular domains:
	\begin{definition}[Harmonic measure]
		Let $D$ be a regular domain. We denote by $\mathcal{B}(\partial D)$ the set of all Borel subsets of $\partial D$. A \emph{harmonic measure} for $D$ is a function $\omega(\cdot,\cdot,D):D\times\mathcal{B}(\partial D)\to\R$ with the following properties:
		\begin{enumerate}
			\item for any $z\in\D$, the set function $\omega(z,\cdot,D)$ is a Borel probability measure on $\mathcal{B}(\partial D)$,
			\item for any continuous function $\phi:\partial D\to\R$, the function
			\[
			\left\{\begin{array}{lll}
				D & \to & \T\\
				z & \mapsto & \int_{\partial D}\phi(\zeta)d\omega(z,\zeta,D)
			\end{array}
			\right.
			\]
			solves the corresponding Dirichlet problem.
		\end{enumerate}
	\end{definition}
	The existence of a harmonic measure for a regular domain is guaranteed by the Riesz Representation Theorem. We note that harmonic measures can be defined differently for more general domains, but the relationship with the Dirichlet problem is lost. In this paper, we are content to define the harmonic measure only for regular domains, as all the domains we will consider are actually simply connected, which are known to be regular. As always, we refer to \cite{Ransford1995} for further details. An important property about the harmonic measure is the following:
	\begin{theorem}
		Let $D$ be a regular domain and $B$ a Borel subset of $\partial D$. Then, the function $D\ni z\mapsto\omega(z,B,D)$ is either a positive harmonic function or the constant function $0$.
	\end{theorem}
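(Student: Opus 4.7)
The plan is to first show that the function $u(z):=\omega(z,B,D)$ is harmonic on $D$, and then to use Harnack's inequality to obtain the stated dichotomy.

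For the harmonicity step, I would proceed by a standard monotone-class argument. By the very definition of harmonic measure, the function $z\mapsto\int_{\partial D}\phi(\zeta)\,d\omega(z,\zeta,D)$ is harmonic on $D$ whenever $\phi$ is continuous, since it solves the Dirichlet problem with boundary data $\phi$. For an open subset $B$ of $\partial D$, I would write $\mathbf{1}_B$ as a pointwise increasing limit of continuous functions $\phi_n:\partial D\to[0,1]$ (Urysohn-type construction). The harmonic functions $u_n(z):=\int\phi_n\,d\omega(z,\cdot,D)$ are uniformly bounded by $1$ (since $\omega(z,\cdot,D)$ is a probability measure), and by monotone convergence $u_n(z)\uparrow u(z)$ pointwise. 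Harnack's principle---that the increasing pointwise limit of a uniformly bounded sequence of harmonic functions is itself harmonic---then gives harmonicity of $u$ on $D$. The extension from open sets to arbitrary Borel sets $B$ is achieved by a monotone-class argument, applied to the collection of Borel subsets of $\partial D$ for which $z\mapsto\omega(z,B,D)$ is harmonic; this collection is closed under increasing and decreasing monotone limits (again via Harnack's principle and the uniform bound) and contains all open sets, hence all Borel sets.

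Once harmonicity of $u$ is in hand, the dichotomy follows directly from the Harnack inequalities stated earlier. Since $u\geq 0$ on the connected set $D$, if there exists $z_0\in D$ with $u(z_0)=0$, then for every $w\in D$,
\[
0\leq u(w)\leq d_D(z_0,w)\,u(z_0)=0,
\]
so $u\equiv 0$ on $D$. Otherwise $u(z)>0$ for every $z\in D$, and $u$ is a strictly positive harmonic function.

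The main obstacle is the first step, namely promoting harmonicity of $z\mapsto\int\phi\,d\omega(z,\cdot,D)$ from continuous $\phi$ to indicator functions of arbitrary Borel sets. The defining property of harmonic measure only controls continuous boundary data, so one must carefully combine an approximation procedure with a measure-theoretic extension (Harnack's principle together with the monotone-class theorem) to transfer the property to all of $\mathcal{B}(\partial D)$. Once this technical point is settled, the dichotomy itself is essentially an immediate consequence of the Harnack inequalities together with the connectedness of $D$.
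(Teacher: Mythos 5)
Your proof is correct, and it is essentially the standard argument for this fact; the paper itself states the theorem without proof as background from Ransford's book, where this is exactly how it is established (approximation of $\mathbf{1}_B$ by continuous boundary data plus Harnack). Two small points to tighten: the passage from open sets to all Borel sets should be run as a Dynkin $\pi$--$\lambda$ argument --- the class of ``good'' sets $B$ is a $\lambda$-system (it contains $\partial D$, is closed under proper differences by subtracting harmonic functions, and under increasing unions by Harnack's principle and the bound $0\leq\omega\leq1$) containing the $\pi$-system of open sets; merely being closed under monotone limits and containing the open sets is not the hypothesis of the monotone class theorem, which requires an algebra. Also, the Harnack inequality as stated applies to \emph{positive} harmonic functions, so at a zero $z_0$ of $u$ you should apply it to $u+\varepsilon$ and let $\varepsilon\to0$, or simply invoke the strong minimum principle for the non-negative harmonic function $u$ on the connected set $D$.
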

	We note that, by a conformal mapping, we can define all the previous notions for subsets of the Riemann sphere $\overline{\C}:=\C\cup\{\infty\}$ instead of $\C$. Thus, we can define the Green function with singularity at $\infty$.
	\begin{definition}[Green function]
		Let $D$ be a proper subdomain of $\overline{\C}$ with $\infty\in D$. A Green function for $D$ with singularity at $\infty$ is a function $g_D(\cdot,\infty):D\to\R$ with the following properties:
		\begin{enumerate}
			\item $g_D(\cdot,\infty)$ is harmonic on $D\setminus\{\infty\}$ and bounded on any neighbourhood of $\infty$,
			\item $g_D(\infty,\infty)=\infty$ and $\lim_{z\to\infty}(g_D(z,\infty)-\log|z|)$ exists in $\R$,
			\item $\lim_{z\to\zeta}g_D(z,\infty)=0$ for every $z\in\partial D$, except possibly for a polar set.
		\end{enumerate}
	\end{definition}
	It is known that a unique Green function exists for a domain $D$, whenever $D$ is a proper subdomain of $\overline{\C}$ with $\infty\in D$ and $\partial D$ non-polar.
	
	\medskip
	
	To conclude this preliminary part, we recall the Bernstein-Walsh Theorem, which is a classical result about quantitative polynomial approximation in the complex plane, and which we wish to generalise. We refer to Theorem 6.3.1 in \cite{Ransford1995}.
	\begin{theoremBW}
		Let $K\subset \C$ be a compact set with connected complement. Let also $U$ be an open set containing $K$. Then, for any $\vp \in A(\overline{U})$,
		\[
		\limsup _{n}\left\{\inf\{\Vert P - \vp \Vert _K:\,P \text{ is a polynomial with deg}(P)\leq n\}\right\}^{1/n}  = \theta < 1,
		\]
		where
		\[
		\theta = \left\{
		\begin{array}{lll}
			\sup_{\overline{\C}\setminus U}e^{-g_{\overline{\C}\setminus K}(z,\infty)} & \text{if } K\text{ is not polar}\\
			0 &  \text{if } K\text{ is polar}
		\end{array}\right..
		\]
	\end{theoremBW}
	
	\subsection{Incomplete polynomial approximation - Proofs of Theorems \hyperref[thmc]{C} and \hyperref[thmd]{D}}\label{wpa}In this section, we first introduce two of the very few results on approximation by incomplete polynomials in the complex plane. Then we shall prove a general statement, namely Theorem \ref{thm-BW-incomplete} below, from which Theorems \hyperref[thmc]{C} and \hyperref[thmd]{D} will immediately follow as corollaries.
	
	
	We recall that approximation by incomplete polynomials means approximation by polynomials of the form
	\[
	P_n(z)=\sum_{k=n/\tau_n}^na_kz^k,
	\]
	where $(\tau_n)_n$ is a sequence of real numbers with $\tau_n >1$, for any $n\in \N$. As said in the introduction, the problem of approximation on compact sets of the complex plane by incomplete polynomials was successfully attacked from the perspective of weighted polynomial approximation. For instance, in the late 1990s, Pritsker and Varga obtained a nice result in this direction. We shall present a particular case of it.
	
	Let $G$ be a bounded domain in $\C$ and $W$ a function holomorphic in $G$ that does not vanish in $G$. Following \cite{PritskerVarga1998}, we say that $(G,W)$ is a pair of approximation if every function $f$ holomorphic in $G$ can be locally uniformly approximated on $G$ by the sequence $(W^nP_n)_n$ for some sequence $(P_n)_n$ of polynomials with $\text{deg}(P_n)\leq n$ for $n\in\N$.
	\begin{theorem}[Theorem 2.2 in \cite{PritskerVarga1998}]\label{thm-PV}Let $G$ be a simply connected bounded domain contained in $\C\setminus (-\infty,0]$ and $W(z)=z^{\alpha}$ for $z\in G$ and some $\alpha >0$ (here we choose the principal branch of the logarithm). Then $(G,z^{\alpha})$ is a pair of approximation if and only if
		\begin{equation}\label{eq-PV}
			\mu:=(1+\alpha)\omega(\infty,\cdot,\overline{\C}\setminus \overline{G}) - \alpha\omega(0,\cdot,\overline{\C}\setminus \overline{G})
		\end{equation}
		is a positive measure. 
	\end{theorem}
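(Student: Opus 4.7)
The plan is to interpret $z^{\alpha n}P_n(z)$ as a weighted polynomial of total degree at most $(1+\alpha)n$, carrying $\alpha n$ ``forced'' zeros at the origin (supplied by the weight $z^{\alpha n}$) and up to $n$ free zeros coming from $P_n$. Classical Walsh theory suggests that, for an optimal approximation from $\overline{\C}\setminus\overline{G}$ of a holomorphic function on $G$, the full asymptotic distribution of the zeros of the weighted polynomial should equal $(1+\alpha)\omega(\infty,\cdot,\overline{\C}\setminus\overline{G})$, since this is the measure driving the Bernstein--Walsh rate at total degree $(1+\alpha)n$. Peeling off the compulsory mass $\alpha\,\delta_0$ (with multiplicity $n$) and sweeping it onto $\partial G$ via balayage relative to $\overline{\C}\setminus\overline{G}$ produces exactly $\alpha\,\omega(0,\cdot,\overline{\C}\setminus\overline{G})$, so the residual asymptotic distribution of the free zeros of $P_n$ is precisely the measure $\mu$ of the statement. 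In this picture, the positivity of $\mu$ is identified at once as the obstruction: a weak-$\ast$ limit of normalized zero counting measures of polynomials is automatically a positive measure.

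For necessity, I would apply the pair-of-approximation property to the constant function $f\equiv 1$, producing polynomials $P_n$ with $\deg P_n\le n$ and $z^{\alpha n}P_n(z)\to 1$ locally uniformly on $G$. Writing $\nu_n=\frac{1}{n}\sum_j\delta_{z_{j,n}}$ for the normalized zero counting measure of $P_n$, I would extract a weak-$\ast$ subsequential limit $\nu$, tightness being automatic once escape of zeros to infinity is ruled out by a Bernstein--Walsh estimate on a large circle in $\overline{\C}\setminus\overline{G}$. A principle-of-descent argument would then convert the locally uniform convergence of $z^{\alpha n}P_n$ to $1$ on $G$ into convergence of the subharmonic functions $\frac{1}{n}\log|z^{\alpha n}P_n(z)|$ to $0$ on $G$. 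Solving the Dirichlet problem on $\overline{\C}\setminus\overline{G}$ with boundary data $-\alpha\log|z|$ then identifies $p_\nu$ on $\overline{\C}\setminus\overline{G}$ with $p_\mu$ up to an additive constant, and the uniqueness theorem for logarithmic potentials forces $\nu=\mu$, which is therefore positive.

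For sufficiency, assuming $\mu\ge 0$---so that $\mu$ is a probability measure on $\partial G$, since $(1+\alpha)-\alpha=1$---I would realize $\mu$ as the weak-$\ast$ limit of normalized counting measures of $n$-point Fekete-type configurations $\{z_{1,n},\ldots,z_{n,n}\}\subset\partial G$, set $P_n(z)=c_n\prod_{j=1}^{n}(z-z_{j,n})$, and determine $c_n$ by Hermite-type interpolation of the target holomorphic function at these nodes. The error $z^{\alpha n}P_n-f$ would then be controlled via a Cauchy-integral representation of $f$ along a contour slightly inside $G$, using the Bernstein--Walsh type estimate that $\frac{1}{n}\log|z^{\alpha n}P_n(z)|$ converges to $\alpha\log|z|+p_\mu(z)-I(\mu)$, a function that is strictly negative on compacta of $G$ precisely by the defining balance encoded in $\mu$.

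The main obstacle is exactly this last step: verifying that the identity $\mu=(1+\alpha)\omega(\infty,\cdot,\overline{\C}\setminus\overline{G})-\alpha\omega(0,\cdot,\overline{\C}\setminus\overline{G})$ translates into the strict negativity on $G$ that ensures geometric convergence. This will require careful use of the maximum principle together with the hypothesis $G\subset\C\setminus(-\infty,0]$, which is precisely what allows $z^{\alpha}$ to be chosen single-valued and makes the whole weighted construction meaningful.
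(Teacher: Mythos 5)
First, a point of reference: the paper offers no proof of this statement at all. Theorem \ref{thm-PV} is quoted verbatim from Pritsker and Varga (their Theorem 2.2), so there is no internal argument to compare yours against; I can only assess your outline on its own terms. Your overall strategy --- reading $z^{\alpha n}P_n$ as a weighted polynomial, sweeping the forced mass $\alpha\delta_0$ onto $\partial G$ to produce $\alpha\,\omega(0,\cdot,\overline{\C}\setminus\overline{G})$, and identifying positivity of the residual measure as the obstruction --- is indeed the standard potential-theoretic route and is the one taken in the cited literature.

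There are, however, two concrete gaps. In the necessity step, the raw weak-$\ast$ limit $\nu$ of the normalized zero counting measures is \emph{not} equal to $\mu$ in general: Hurwitz only excludes zeros from compacta of $G$, so the zeros of $P_n$ may sit anywhere in $\overline{\C}\setminus G$, and two distinct measures can have potentials that agree up to a constant on the open set $G$; the unicity theorem for potentials does not apply when the potentials are only known to coincide on an open set away from the supports. One must first take the balayage of $\nu$ out of $\overline{\C}\setminus\overline{G}$ onto $\partial G$; it is that swept measure which is identified with $\mu$, and positivity of $\mu$ then follows because balayage preserves positivity. Second, and more seriously, the strict negativity you hope for in the sufficiency step is false. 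By Frostman's theorem the potential of $\omega(\infty,\cdot,\overline{\C}\setminus\overline{G})$ is constant on $G$, and by the balayage identity the potential of $\omega(0,\cdot,\overline{\C}\setminus\overline{G})$ equals $\log|z|$ plus a constant on $G$; hence $\alpha\log|z|+p_{\mu}(z)$ is \emph{exactly constant} on $G$. This is precisely the critical, borderline feature of this weighted approximation problem: the Hermite-interpolation error estimate, whose exponential rate is governed by the difference of this function between the compact set and the contour, yields no geometric decay when both lie in $G$. The genuine proof of sufficiency requires the finer discretization techniques of Totik and Kuijlaars, in which the nodes are placed so that the error in the exponent is controlled at scale $O(\log n/n)$ rather than $O(1)$. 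So the step you yourself flag as ``the main obstacle'' cannot be resolved in the way you describe, and the sketch does not constitute a proof.
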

	
	For more general weights, see Theorem 1.1 in \cite{PritskerVarga1998}.  Further, one can make the following easy observation, see \cite{CharpentierMaronikolakisII2025}:
	
	\begin{prop}Let $G$ be a simply connected bounded domain contained in $\C\setminus (-\infty,0]$. There exists $\alpha >0$ such that $(G,z^{\alpha})$ has the approximation property.
	\end{prop}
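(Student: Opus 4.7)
The plan is to apply Theorem \ref{thm-PV} and exhibit $\alpha > 0$ for which the signed measure in \eqref{eq-PV},
$$\mu_{\alpha} := (1+\alpha)\,\omega(\infty,\cdot,D) - \alpha\,\omega(0,\cdot,D), \qquad D := \overline{\C}\setminus \overline{G},$$
is a positive Borel measure on $\partial D$. Under the hypothesis that $G$ is a simply connected bounded domain of $\C$ avoiding the branch cut $(-\infty,0]$, the set $D$ is a domain of $\overline{\C}$ containing both $0$ and $\infty$, so the harmonic measures $\omega(0,\cdot,D)$ and $\omega(\infty,\cdot,D)$ are well defined.

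The key step is a Harnack comparison of these two measures, taking $0$ and $\infty$ as base points. For every Borel set $B\subset\partial D$, the function $z\mapsto \omega(z,B,D)$ is either identically zero or a positive harmonic function on $D$, by the theorem recalled in Section \ref{pot}. Applying the Harnack inequality of Section \ref{pot} with Harnack distance $d := d_{D}(0,\infty)$, one obtains
$$\omega(0,B,D)\leq d\,\omega(\infty,B,D)$$
for every Borel $B\subset\partial D$; this is trivial when $\omega(\infty,B,D)=0$, since the dichotomy then forces $\omega(0,B,D)=0$ as well. As a measure inequality on $\partial D$, this reads $\omega(0,\cdot,D)\leq d\,\omega(\infty,\cdot,D)$.

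Inserting this into the definition of $\mu_{\alpha}$ yields
$$\mu_{\alpha}\geq \bigl(1+\alpha - \alpha d\bigr)\,\omega(\infty,\cdot,D),$$
so $\mu_{\alpha}$ is positive as soon as $\alpha(d-1)\leq 1$. Any $\alpha > 0$ works when $d\leq 1$, and any $\alpha\in\bigl(0,1/(d-1)\bigr]$ works when $d > 1$. Fixing such an $\alpha > 0$ and invoking Theorem \ref{thm-PV} concludes the proof. The only subtle point is passing from a pointwise Harnack bound at $0$ and $\infty$ to a genuine inequality between the two measures: this is exactly what the alternative \emph{identically zero or positive everywhere} for $\omega(\cdot,B,D)$ delivers, and it is the reason no regularity assumption on $\partial G$ beyond what is implicit in Theorem \ref{thm-PV} is needed here.
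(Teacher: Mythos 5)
Your proof is correct and follows essentially the same route as the paper's: the dichotomy (identically zero or positive harmonic) for $z\mapsto\omega(z,B,\overline{\C}\setminus\overline{G})$, a Harnack comparison between the base points $0$ and $\infty$ giving $\omega(0,\cdot)\leq d\,\omega(\infty,\cdot)$, and then the choice $\alpha\leq 1/(d-1)$ to make the measure in \eqref{eq-PV} positive. No discrepancies to report.
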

	Having in mind that harmonic measures are conformally invariant and coincide with the Poisson measure if $\Omega=\D$, condition \eqref{eq-PV} is rather easy to check in various concrete situations. We refer to \cite{PritskerVarga1998,CharpentierMaronikolakisII2025} for explicit computations or estimations on simple examples, related to Theorem \ref{thm-PV}. 

	To see why Theorem \ref{thm-PV} is useful to deal with incomplete polynomial approximation, let us fix $\alpha = \frac{p}{q}$ positive and rational. It is easily checked that, if $(G,z^{\alpha})$ has the approximation property then, on any compact set $K \subset G$, any function holomorphic on $G$ can be aproximated uniformly on $K$ by incomplete polynomials of the form $\sum_{k=n/\tau}^n a_k z^k$ with $\tau = \frac{\alpha +1}{\alpha}=1+\frac{q}{p}$. In the same vein as the Bernstein-Walsh Theorem, a quantitative version of Pritsker-Varga's Theorem has recently been formulated in \cite{CharpentierLevenbergWielonsky2024}. It can be stated as follows:
	
	\begin{theorem}\label{thm-PV-quantitative}Let $G$ be a simply connected bounded domain contained in $\C\setminus (-\infty,0]$ and $\alpha >0$ such that $(G,z^{\alpha})$ is a pair of approximation. For any  compact subset $K$ of $G$, with connected complement, there exist $B\geq 1$, $\theta \in (0,1)$ and $N\in \N$, such that for any $\vp\in H(G)$ and any $n\geq N$, there exists a polynomial $P_n$ with $\text{deg}(P_n)\leq n$ such that
		\[
		\Vert z^{\alpha n}P_n - \vp \Vert_K \leq B\Vert \vp\Vert _{\overline{U}}\theta^{n}.
		\]
	\end{theorem}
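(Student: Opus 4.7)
The plan is to adapt the classical Bernstein--Walsh argument based on Hermite interpolation to the weighted framework provided by the pair of approximation $(G,z^\alpha)$. First I would pick a simply connected intermediate domain $G_0$ with analytic boundary such that $K\subset G_0 \Subset G$ (feasible because $K$ has connected complement and lies in $G$). The sup norm of $\varphi$ over $\partial G_0$ --- which will play the role of $\|\varphi\|_{\overline{U}}$ in the statement --- will drive the estimate. The candidate polynomials $P_n$ are then produced as the Lagrange interpolants of the function $z\mapsto \varphi(z)/z^{\alpha n}$ at the $(n+1)$-th \emph{weighted} Fekete points $\{z_0^{(n)},\dots,z_n^{(n)}\}$ of $\overline{G}$ relative to the external field $Q(z)=-\alpha\log|z|$, using the principal branch of the logarithm (well defined since $G\subset \C\setminus(-\infty,0]$).

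The Hermite remainder formula then gives, for every $z\in K$,
\begin{equation*}
\varphi(z)-z^{\alpha n}P_n(z)=\frac{1}{2\pi i}\int_{\partial G_0}\frac{z^{\alpha n}\,\omega_n(z)}{\zeta^{\alpha n}\,\omega_n(\zeta)}\cdot \frac{\varphi(\zeta)}{\zeta-z}\,d\zeta,
\end{equation*}
where $\omega_n(z)=\prod_{j=0}^n(z-z_j^{(n)})$. The task then reduces to controlling the ratio $|z|^{\alpha n}|\omega_n(z)|/|\zeta|^{\alpha n}|\omega_n(\zeta)|$ uniformly for $z\in K$ and $\zeta\in\partial G_0$. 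Here I would invoke weighted logarithmic potential theory à la Saff--Totik: the normalized counting measures of the weighted Fekete points for $(\overline G,e^{-Q})$ converge weak-$*$ to the weighted equilibrium measure $\mu_Q$, and $\frac{1}{n}\log|\omega_n|$ converges to its logarithmic potential $U^{\mu_Q}$. The positivity of the measure $\mu$ in \eqref{eq-PV} is precisely what forces $\mathrm{supp}(\mu_Q)=\partial G$ and makes the weighted Green-type function $z\mapsto \alpha\log|z|+U^{\mu_Q}(z)$ strictly larger on $\partial G$ than on any compact set $K\subset G$. Choosing $G_0$ sufficiently close to $\partial G$ creates a uniform gap and delivers some $\theta\in(0,1)$; the prefactor $B$ then comes from a standard $ML$-type estimate on the contour $\partial G_0$ combined with the factor $\dist(K,\partial G_0)^{-1}$.

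The main obstacle is to upgrade the asymptotic statement $\frac{1}{n}\log|\omega_n|\to U^{\mu_Q}$ into genuine uniform exponential bounds valid for every large $n$: one needs an \emph{upper} envelope estimate on $K$ and a \emph{lower} envelope estimate on $\partial G_0$, and both must be uniform. This is where the deeper weighted potential-theoretic machinery (the lower envelope theorem and Bernstein--Markov type inequalities) enters, and where the positivity hypothesis on $\mu$ is indispensable: it prevents the support of the weighted equilibrium measure from retreating away from $\partial G$, which would cause the estimate to degenerate. Once these uniform envelope bounds are established on both sides, the rest of the argument reduces to the length estimate on $\partial G_0$ and the absorption of finitely many correction terms into the constant $B$.
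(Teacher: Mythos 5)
First, a point of reference: the paper does not prove Theorem \ref{thm-PV-quantitative} at all --- it is quoted as a known result from \cite{CharpentierLevenbergWielonsky2024} --- so your proposal has to be judged on its own terms, and as written it contains a gap that is fatal to the stated strategy. You interpolate $\vp(z)/z^{\alpha n}$ at the weighted Fekete points of $\overline{G}$ for the external field $Q(z)=-\alpha\log|z|$ and claim that the positivity of the Pritsker--Varga measure makes $z\mapsto \alpha\log|z|+U^{\mu_Q}(z)$ ``strictly larger on $\partial G$ than on any compact set $K\subset G$.'' This is false: the positivity hypothesis forces $\operatorname{supp}\mu_Q=\partial G$, so $\alpha\log|z|+U^{\mu_Q}(z)$ is harmonic throughout $G$ (note $0\notin G$ and $\mu_Q$ charges only $\partial G$), and the Frostman conditions make it equal to the Robin constant q.e.\ on $\partial G$; by the maximum principle it is therefore \emph{constant} on all of $\overline{G}$. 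Consequently the ratio $|z|^{\alpha n}|\omega_n(z)|\,/\,|\zeta|^{\alpha n}|\omega_n(\zeta)|$ for $z\in K$ and $\zeta\in\partial G_0$ is only $e^{o(n)}$, not $O(\theta^n)$: there is no exponential gap between two compact subsets of $\overline{G}$ in this equilibrium problem, and no amount of envelope or Bernstein--Markov machinery will manufacture one. There is also a structural inconsistency in your remainder formula: the Hermite/Lagrange remainder requires the interpolation nodes to lie \emph{inside} the contour of integration, but your nodes sit on $\partial G$ while your contour is $\partial G_0\Subset G$, and the contour cannot be pushed beyond $\partial G$ because $\vp$ is only holomorphic in $G$.

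The geometric rate $\theta^n$ in the statement must instead come from the room between $K$ and $\partial G$, i.e.\ from the holomorphy of $\vp$ on a set strictly larger than $K$ --- exactly as in the classical Bernstein--Walsh theorem, where $\theta=\sup_{\partial U}e^{-g_{\overline{\C}\setminus K}(\cdot,\infty)}$ is attached to $K$, not to the ambient domain. A repairable version of your plan would take the weighted Fekete points of $K$ itself (so the nodes lie inside $\partial G_0$ and the remainder formula is legitimate), and then the quantity to control is the weighted Green-type function $\alpha\log|z|+p_{\nu}(z)+F$ associated with the weighted equilibrium measure $\nu$ of $K$: one must show it is bounded below by a strictly positive constant on $\partial G_0$. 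That is where the positivity of the measure in \eqref{eq-PV} (and the geometry of $G$ relative to $0$) genuinely enters, because this function tends to $-\infty$ at the origin and its positivity region does not automatically contain $\partial G_0$; this is also the step that is absent from your sketch. In short, you have placed the potential-theoretic gap in the wrong equilibrium problem, and the step you flag as ``the main obstacle'' (uniform envelope estimates) is not the real difficulty.
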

	
	This result can be seen as a quantitative approximation result by incomplete polynomials of the form $\sum _{k=n/\tau}^na_k z^k$ for a fixed $\tau >1$. However it is not satisfactory enough with respect to Problem \hyperref[qsaip]{QSAIP}. In particular, it does not tell us anything about simultaneous approximation around $0$.
	As the main contribution of this section, the following result will fill this gap. In the sequel, we will call \textit{valuation} of a non-zero polynomial with coefficients $a_k\in\C$, the smallest $k\in \N$ such that $a_k \neq 0$.
	\begin{theorem}\label{thm-BW-incomplete}Let $K\subset \C\setminus \{0\}$ and $L\subset \C$ be two disjoint compact sets with connected complement. Let also $U$ be a bounded open set containing $K$. Then there exist $(\theta_n)_n$ decreasing with $\theta_n \to 1$ as $n\to \infty$, $C>0$ and $G\in (0,1)$ such that, for any $\tau >1$, any $\vp \in A(\overline{U})$, and any $n\in\N$, there exists a polynomial $P_n$ of the form $\sum_{k=n/\tau}^{n} a_k z^k$ such that
		\[
		\max\{\Vert P_n \Vert _L,\Vert \vp - P_n\Vert _K \}\leq C\Vert \vp \Vert _{\overline{U}}\left(\frac{M_{K\cup L}}{m_K/2}\right)^{\frac{n}{\tau}}\left(G\theta_{n-\lfloor \frac{n}{\tau}\rfloor}\right)^{n-\lfloor \frac{n}{\tau}\rfloor},
		\]
		where $M_{K\cup L}=\max_{z\in K\cup L}|z|$ and $m_{K}=\min_{z\in K}|z|$.
	\end{theorem}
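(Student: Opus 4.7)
The plan is to construct $P_n$ by Lagrange interpolation at Fekete points of $K\cup L$, with interpolation data chosen so that the resulting polynomial automatically has valuation at least $m:=\lfloor n/\tau\rfloor$. Without loss of generality we may shrink $U$ to a thin tubular neighborhood of $K$ so that $\overline U\cap(L\cup\{0\})=\emptyset$ and $\min_{\overline U}|z|\geq m_K/2$, since $\|\varphi\|_{\overline U}$ only decreases under shrinking. Fix a small open neighborhood $V_L$ of $L$ disjoint from $\overline U$ and define $\tilde\psi$ on $\overline U\cup V_L$ by $\tilde\psi=\varphi/z^m$ on $\overline U$ and $\tilde\psi\equiv 0$ on $V_L$, which is holomorphic in $U\cup V_L$ and continuous on the closure. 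Letting $\zeta_1,\ldots,\zeta_{N+1}$ be Fekete points of $K\cup L$ with $N:=n-m$, set $\omega_N(z):=\prod_{j=1}^{N+1}(z-\zeta_j)$ and let $Q_N$ be the degree $\leq N$ Lagrange interpolant of $\tilde\psi$ at these nodes. Then
\[
P_n(z):=z^m Q_N(z)
\]
has degree at most $n$ and valuation at least $m$, hence is of the prescribed form.

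Hermite's contour formula applied on $\partial(\overline U\cup V_L)$ yields, for every $z\in K\cup L$,
\[
\tilde\psi(z)-Q_N(z)=\frac{\omega_N(z)}{2\pi i}\oint_{\partial\overline U}\frac{\varphi(w)/w^m}{\omega_N(w)(w-z)}\,dw,
\]
the $\partial V_L$ contribution vanishing because $\tilde\psi\equiv 0$ there. Crudely estimating the integrand using $|w|\geq m_K/2$ on $\partial\overline U$, $|\varphi(w)|\leq\|\varphi\|_{\overline U}$, and uniform positive lower bounds on $\dist(\partial\overline U,K)$ and $\dist(\partial\overline U,L)$, together with the identities $\varphi-P_n=z^m(\tilde\psi-Q_N)$ on $K$ and $P_n=-z^m(\tilde\psi-Q_N)$ on $L$, and multiplying by $|z|^m\leq M_{K\cup L}^m$, we obtain
\[
\max\{\|P_n\|_L,\;\|\varphi-P_n\|_K\}\leq C_0\left(\frac{M_{K\cup L}}{m_K/2}\right)^m\|\varphi\|_{\overline U}\cdot\frac{\sup_{K\cup L}|\omega_N|}{\inf_{\partial\overline U}|\omega_N|}
\]
for a constant $C_0>0$ depending only on $K,L,U$.

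It remains to estimate the Fekete ratio asymptotically. By the Fekete--Szeg\H o theorem, $\sup_{K\cup L}|\omega_N|^{1/(N+1)}\to c(K\cup L)$, while Frostman's theorem implies that $|\omega_N(w)|^{1/(N+1)}$ converges locally uniformly on the unbounded component $\Omega$ of $\overline{\C}\setminus(K\cup L)$ to $c(K\cup L)\exp(g_\Omega(w,\infty))$. Since our tight $\overline U$ around $K$ can be arranged so that $\partial\overline U\subset\Omega$ at positive distance from $\partial\Omega$, the convergence is uniform on $\partial\overline U$ and
\[
\left(\frac{\sup_{K\cup L}|\omega_N|}{\inf_{\partial\overline U}|\omega_N|}\right)^{1/N}\longrightarrow G:=\sup_{\partial\overline U}e^{-g_\Omega(\cdot,\infty)}\in(0,1).
\]
Setting $\theta_N:=\sup_{M\geq N}\max\bigl(1,\,(\sup_{K\cup L}|\omega_M|/\inf_{\partial\overline U}|\omega_M|)^{1/M}/G\bigr)$ produces a decreasing sequence $\theta_N\geq 1$ with $\theta_N\to 1$, and by construction the Fekete ratio is bounded by $(G\theta_N)^N$, which combined with the previous display gives the claim. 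The main obstacle is this last step: we must verify that $\partial\overline U$ genuinely lies in the outer component $\Omega$, which can be nontrivial when $K$ and $L$ together enclose a bounded region (e.g.\ complementary arcs of a circle), and this is handled by taking $\overline U$ tight enough around $K$ that it cannot enter any hole created by $L$; the secondary technical point is the uniformity of the Fekete asymptotic on $\partial\overline U$ when $\overline{\C}\setminus(K\cup L)$ is disconnected, which follows from the standard potential-theoretic compactness of the normalized logarithms $\frac{1}{N+1}\log|\omega_N|$.
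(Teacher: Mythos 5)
Your construction is, in its skeleton, the same as the paper's: divide $\varphi$ by $z^{m}$ on a neighbourhood of $K$, extend by $0$ near $L$, interpolate at Fekete points of $K\cup L$ via the Hermite remainder formula, and multiply back by $z^{m}$; the factor $\left(M_{K\cup L}/(m_K/2)\right)^{n/\tau}$ and the valuation/degree bookkeeping come out identically. Where you genuinely diverge is in estimating the Fekete ratio $\sup_{K\cup L}|\omega_N|/\inf_{\partial\overline{U}}|\omega_N|$. The paper invokes Bernstein's lemma (Theorem 5.5.7(b) in \cite{Ransford1995}), a non-asymptotic inequality involving the Harnack distance, which yields the explicit sequence $\theta_m=\left(\delta_m(K\cup L)/c(K\cup L)\right)^{\max_{\Gamma}d_{\overline{\C}\setminus(K\cup L)}(\cdot,\infty)}$, decreasing to $1$ by the Fekete--Szeg\"o theorem. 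You instead use the $N$-th root asymptotics of Fekete polynomials (weak-$*$ convergence of the Fekete measures to the equilibrium measure, plus Frostman's identity $p_\mu=\log c+g_\Omega(\cdot,\infty)$) and then manufacture $(\theta_N)_N$ as a tail supremum. This is logically sufficient, since the statement only asserts the existence of some decreasing $\theta_N\to1$; but it sacrifices the explicit form of $\theta_m$, which the paper exploits later: the proof of Theorem E needs $\sup_n\theta_n<\infty$ uniformly over the varying compacts $K^{1/n}$, obtained from Lemma \ref{conv} precisely because $\theta_n$ is expressed through $\delta_n/c$. Your tail-supremum $\theta$ would not transfer to that argument.

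Two smaller points. First, your proof presupposes $K\cup L$ non-polar: if it is polar there is no Green function and no equilibrium measure, so your $G$ and the claimed $N$-th root limit are undefined. The paper disposes of this in one line by enlarging $K$ inside $U$; you should do the same. Second, the ``main obstacle'' you single out is a phantom: two disjoint compact sets, each with connected complement, always have a union with connected complement (Janiszewski's theorem), so $\overline{\C}\setminus(K\cup L)$ has a single, unbounded component and $\partial\overline{U}$ automatically lies in it; no tightening of $U$ is needed for that purpose.
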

	
	\begin{proof}The proof closely follows that of the Bernstein-Walsh Theorem (Theorem 6.3.1 in \cite{Ransford1995}), but requires a careful tracking of the constants and an introduction of the parameter $\tau$ at the right moment.
		
		Upon enlarging $K$, we may and shall assume that $K$ is non-polar. Let $\tau>1$ be fixed. Let $U_1$ and $U_2$ be two disjoint open sets such that $L\subset U_1$ and $K\subset U_2$ (we can assume $U_2\subset U$) with $0\notin \overline{U_2}$. We also fix a path $\Gamma=\Gamma_1\cup \Gamma_2$ such that $\Gamma_1 \subset U_1\setminus L$ and $\Gamma_2 \subset U_2\setminus K$. We assume that both $\Gamma_1$ and $\Gamma_2$ have winding number equal to $1$ around any point of $L$ and $K$, and equal to $0$ around any point of $\C\setminus U_1$ and $\C\setminus U_2$, respectively. Let $\tau>1$ be arbitrary, $\vp \in A(\overline{U})$ and $n\in \N$. We define the function:
		\[
		h_n(z)=\left\{
		\begin{array}{cl}
			\frac{\vp(z)}{z^{\lfloor n/\tau \rfloor}} & \text{if }z\in U_2\\
			0 & \text{if }z\in U_1
		\end{array}
		\right..
		\]
		Let us denote by $q_m$, $m\geq 2$, a Fekete polynomial of degree $m$, for the set $K\cup L$ which is a compact set with connected complement (see Definition 5.5.3 in \cite{Ransford1995}). Let then $Q_m$ be the polynomial
		\[
		Q_m(w)=\frac{1}{2i\pi}\int_{\Gamma}\frac{h_n(z)}{q_m(z)}\frac{q_m(w)-q_m(z)}{w-z}dz.
		\]
		It is easily seen that $Q_m$ is a polynomial of degree at most $m-1$. By Cauchy's integral formula and the definition of $h_n$, we get, for any $w\in K\cup L$,
		\[
		h_n(w)-Q_m(w)=\frac{1}{2i\pi}\int_{\Gamma}\frac{h_n(z)}{w-z}\frac{q_m(w)}{q_m(z)}dz=\frac{1}{2i\pi}\int_{\Gamma_2}\frac{h_n(z)}{w-z}\frac{q_m(w)}{q_m(z)}dz,
		\]
		and thus
		\begin{eqnarray}\label{eqnar1}
			\notag |h_n(w)-Q_m(w)| & \leq & \frac{l(\Gamma_2)}{2\pi}\frac{\Vert h_n \Vert _{\Gamma_2}}{\text{dist}(\Gamma_2,K)}\frac{\Vert q_m \Vert _{K\cup L}}{\min_{z\in \Gamma_2}|q_m(z)|}\\
			& \leq & \frac{l(\Gamma_2)}{2\pi\text{dist}(\Gamma_2,K)}\frac{\Vert \vp\Vert _{\Gamma_2}}{\min_{z\in \Gamma_2}|z|^{\lfloor \frac{n}{\tau} \rfloor}}\frac{\Vert q_m \Vert _{K\cup L}}{\min_{z\in \Gamma_2}|q_m(z)|},
		\end{eqnarray}
		where $l(\Gamma_2)$ stands for the length of $\Gamma_2$. In particular, we get  for any $z\in L$,
		\begin{equation}\label{eqnar2}
			|z^{\lfloor \frac{n}{\tau} \rfloor}Q_m(z)| \leq \frac{l(\Gamma_2)\Vert \vp\Vert _{\Gamma_2}}{2\pi\text{dist}(\Gamma_2,K)}\frac{\max_{z\in L}|z|^{\lfloor \frac{n}{\tau} \rfloor}}{\min_{z\in \Gamma_2}|z|^{\lfloor \frac{n}{\tau} \rfloor}}\frac{\Vert q_m \Vert _{K\cup L}}{\min_{z\in \Gamma_2}|q_m(z)|}
		\end{equation}
		and for any $z\in K$,
		\begin{equation}\label{eqnar3}
			|z^{\lfloor \frac{n}{\tau} \rfloor}Q_m(z)-\vp(z)|\leq  \frac{l(\Gamma_2)\Vert \vp\Vert _{\Gamma_2}}{2\pi\text{dist}(\Gamma_2,K)}\frac{\max_{z\in K}|z|^{\lfloor \frac{n}{\tau} \rfloor}}{\min_{z\in \Gamma_2}|z|^{\lfloor \frac{n}{\tau} \rfloor}}\frac{\Vert q_m \Vert _{K\cup L}}{\min_{z\in \Gamma_2}|q_m(z)|}.
		\end{equation}
		Now, since $K\cup L$ is non-polar, by Bernstein's lemma (Theorem 5.5.7 (b) in \cite{Ransford1995}),
		\begin{equation}\label{eqnar4}
		\frac{\Vert q_m \Vert _{K\cup L}}{\min_{z\in \Gamma_2}|q_m(z)|}\leq \max_{z\in \Gamma_2}e^{-mg_{\overline{\C}\setminus (K\cup L)}(z,\infty)}\left(\frac{\delta_m(K\cup L)}{c(K\cup L)}\right)^{m\max_{z\in \Gamma_2}d_{\overline{\C}\setminus (K\cup L)}(z,\infty)},
		\end{equation}
		where $d_{\overline{\C}\setminus (K\cup L)}$ is the Harnack distance for the domain $\overline{\C}\setminus (K\cup L)$ and $g_{\overline{\C}\setminus (K\cup L)}(\cdot,\infty)$ is the Green function for $\overline{\C}\setminus (K\cup L)$ at $\infty$. Let us set
		\[
		G= \max_{z\in \Gamma_2}e^{-g_{\overline{\C}\setminus (K\cup L)}(z,\infty)}\quad \text{and}\quad \theta_m=\left(\frac{\delta_m(K\cup L)}{c(K\cup L)}\right)^{\max_{z\in \Gamma_2}d_{\overline{\C}\setminus (K\cup L)}(z,\infty)}.
		\]
		We have $G\in (0,1)$ and, by Fekete-Szegö theorem, $(\theta_m)_m$ decreases to $1$ as $m\to \infty$ (see Theorem 5.5.2 in \cite{Ransford1995}).
		
		It follows from \eqref{eqnar1}, \eqref{eqnar2}, \eqref{eqnar3} and \eqref{eqnar4}, with $m=n-\lfloor\frac{n}{\tau}\rfloor$, that if we set $P_n=z^{\lfloor\frac{n}{\tau}\rfloor}Q_{n-\lfloor\frac{n}{\tau}\rfloor}$, then
		\[
		\Vert P_n \Vert _L \leq C\Vert \vp\Vert _{\overline{U}}\left(\frac{\max_{z\in L}|z|}{\min_{z\in \Gamma_2}|z|}\right)^{n/\tau}(G\theta_{n-\lfloor\frac{n}{\tau}\rfloor})^{n-\lfloor\frac{n}{\tau}\rfloor}
		\]
		and
		\[
		\Vert \vp - P_n \Vert _K \leq C\Vert \vp\Vert _{\overline{U}}\left(\frac{\max_{z\in K}|z|}{\min_{z\in \Gamma_2}|z|}\right)^{n/\tau}(G\theta_{n-\lfloor\frac{n}{\tau}\rfloor})^{n-\lfloor\frac{n}{\tau}\rfloor},
		\]
		where $C>0$ is independent of $\tau$. Also observe that $\lfloor n/\tau \rfloor\leq \text{val}(P_n)\leq \text{deg}(P_n)\leq n$. The conclusion follows, choosing $\Gamma_2$ such that $\min_{z\in \Gamma_2}|z|\geq m_K/2$.
	\end{proof}
	
	\begin{remark}\label{remark-useful}\quad \begin{enumerate}\item From the proof above, we notice that the constants $C$ and $G$ in the statement of Theorem \ref{thm-BW-incomplete} can be chosen as
			\[
			C=\frac{l(\Gamma)}{2\pi\text{dist}(\Gamma,K)}\quad \text{and}\quad G= \max_{z\in \Gamma}e^{-g_{\overline{\C}\setminus (K\cup L)}(z,\infty)},
			\]
			where $g_{\overline{\C}\setminus (K\cup L)}(z,\infty)$ is the Green function of $\overline{\C}\setminus (K\cup L)$ at $\infty$, and $\Gamma$ is any Jordan curve contained in $U$, that contains $K$ in its interior and $L$ in its exterior.
			
			\item Comparing Theorem \ref{thm-BW-incomplete} with Theorem \ref{thm-PV-quantitative}, note that the compact set $K$ does not need to be a subset of $\C\setminus (-\infty,0]$.
			
		\end{enumerate}
	\end{remark}

	Now, it is quite easy to derive Theorems \hyperref[thmc]{C} and \hyperref[thmd]{D} from Theorem \ref{thm-BW-incomplete}. Indeed, let us observe that one can choose $n$ large enough so that
	\[
	C^{1/n}\left(G\theta_{n-\lfloor\frac{n}{\tau}\rfloor}\right)^{\frac{n-\lfloor\frac{n}{\tau}\rfloor}{n}} \in (0,1)
	\]
	for any $\tau \geq 2$. Then $\tau$ can also be chosen large enough so that $\left(\frac{M_{K\cup L}}{m_K/2}\right)^{\frac{1}{\tau}}$ is so close to $1$ that 
	\[
	C^{1/n}\left(\frac{M_{K\cup L}}{m_K/2}\right)^{\frac{1}{\tau}}\left(G\theta_{n-\lfloor\frac{n}{\tau}\rfloor}\right)^{\frac{n-\lfloor\frac{n}{\tau}\rfloor}{n}}\in (0,1).
	\]
	Note that the choice of $n$ is independent of $\tau$ and that $\tau$ only depends on $M_{K\cup L}$, $m_K$, the Green function of $\overline{\C}\setminus (K\cup L)$, and the choice of $\Gamma$ (which depends only on $K$ and $L$).

	For the readers' convenience, and for further use in the next section, let us restate Theorems \hyperref[thmc]{C} and \hyperref[thmd]{D}, as corollaries.
	\begin{cor}\label{cor-incomplete-1}Let $K\subset \C\setminus \{0\}$ and $L\subset \C$ be two disjoint compact sets with connected complement. Let also $U$ be a bounded open set containing $K$. Then there exist $\tau \geq 1$, $\theta\in (0,1)$ and $N\in \N$ such that for any $\vp \in A(\overline{U})$ and any $n \geq N$, there exists a polynomial $P_n$ of the form $\sum_{k= n/\tau}^{n} a_k z^k$ such that
		\[
		\Vert P_n \Vert _L \leq \Vert \vp \Vert _{\overline{U}}\theta^{n}\quad \text{and}\quad \Vert P_n - \vp \Vert _K \leq \Vert \vp \Vert _{\overline{U}}\theta^{n}.
		\]
	\end{cor}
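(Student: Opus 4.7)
The plan is to deduce the corollary directly from Theorem \ref{thm-BW-incomplete} by taking $n$-th roots and absorbing every pre-factor into a single geometric rate $\theta^n$, provided one chooses $\tau$ sufficiently large and $N$ sufficiently large. No new approximation tool is needed; the bulk of the work has already been done in Theorem \ref{thm-BW-incomplete}, and the only task is a careful sequencing of the limits.

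First, I would apply Theorem \ref{thm-BW-incomplete} to the data $K$, $L$, $U$, obtaining constants $C>0$, $G\in(0,1)$ and a decreasing sequence $(\theta_n)_n$ with $\theta_n\to 1$, and the estimate
\[
\max\{\|P_n\|_L,\|\varphi-P_n\|_K\}\leq C\|\varphi\|_{\overline{U}}\left(\frac{M_{K\cup L}}{m_K/2}\right)^{n/\tau}\bigl(G\theta_{n-\lfloor n/\tau\rfloor}\bigr)^{n-\lfloor n/\tau\rfloor}
\]
valid for every $\tau>1$ and $n\in\N$, with $P_n$ of the required form $\sum_{k=n/\tau}^n a_k z^k$. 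After extracting the $n$-th root, it suffices to arrange that
\[
C^{1/n}\left(\frac{M_{K\cup L}}{m_K/2}\right)^{1/\tau}\bigl(G\theta_{n-\lfloor n/\tau\rfloor}\bigr)^{(n-\lfloor n/\tau\rfloor)/n}\leq\theta<1.
\]

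Next, I would handle the base $G\theta_{n-\lfloor n/\tau\rfloor}$ uniformly in $\tau$. Since $G\in(0,1)$ and $\theta_m\to 1$, pick any $\eta\in(G,1)$ and an integer $m_0$ such that $G\theta_m\leq\eta$ for $m\geq m_0$. Restricting from the start to $\tau\geq 2$, one has $n-\lfloor n/\tau\rfloor\geq n/2$, so for $n\geq 2m_0$ the third factor is at most $\eta^{1/2}<1$. Combined with $C^{1/n}\to 1$, this lets me fix an integer $N$, depending only on $K$, $L$, $U$ and not on $\tau$, such that
\[
C^{1/n}\bigl(G\theta_{n-\lfloor n/\tau\rfloor}\bigr)^{(n-\lfloor n/\tau\rfloor)/n}\leq\eta'
\]
for some $\eta'\in(0,1)$ and every $n\geq N$, $\tau\geq 2$.

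Finally, since $\left(\frac{M_{K\cup L}}{m_K/2}\right)^{1/\tau}\to 1$ as $\tau\to\infty$, I can enlarge $\tau$ once (independently of $n$, depending only on $\eta'$ and the geometric data $M_{K\cup L}$, $m_K$) to obtain a single $\theta\in(\eta',1)$ with
\[
\left(\frac{M_{K\cup L}}{m_K/2}\right)^{1/\tau}\eta'\leq\theta<1.
\]
This yields the two desired estimates simultaneously and concludes the proof. The only mildly delicate point—hardly an obstacle—is to make the choice of $N$ before the choice of $\tau$, which is possible precisely because the bound on $G\theta_{n-\lfloor n/\tau\rfloor}$ can be taken uniform over $\tau\geq 2$ thanks to the lower bound $n-\lfloor n/\tau\rfloor\geq n/2$.
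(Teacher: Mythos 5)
Your proposal is correct and follows essentially the same route as the paper: both deduce the corollary from Theorem \ref{thm-BW-incomplete} by taking $n$-th roots, using $n-\lfloor n/\tau\rfloor\geq n/2$ for $\tau\geq 2$ to fix $N$ uniformly in $\tau$, and only afterwards enlarging $\tau$ to absorb the factor $\left(\frac{M_{K\cup L}}{m_K/2}\right)^{1/\tau}$. Your write-up simply makes explicit the details that the paper leaves as a sketch.
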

	
	\quad
	
	\begin{cor}\label{cor-incomplete}Let $(\tau_n)_n$ be a sequence of positive real numbers with $\tau _n \to +\infty$. For any disjoint compact sets $K\subset \C\setminus \{0\}$ and $L\subset \C$ , with connected complement, and for any bounded open set $U$ containing $K$, there exist $\theta\in (0,1)$ and $N\in \N$ such that, for any $\vp \in A(\overline{U})$ and any $n \geq N$, there exists a polynomial $P_n$ of the form $\sum_{k=n/\tau_n}^{n}a_kz^k$ such that
		\[
		\Vert P_n \Vert _L \leq \Vert \vp \Vert _{\overline{U}}\theta^{n}\quad \text{and}\quad \Vert P_n - \vp\Vert _K \leq \Vert \vp \Vert _{\overline{U}}\theta^{n}.
		\]
	\end{cor}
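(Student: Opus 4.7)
The plan is to apply Theorem \ref{thm-BW-incomplete} directly, taking the parameter $\tau$ in that statement to be $\tau_n$ at stage $n$. This immediately produces, for each $n$, a polynomial $P_n$ of the form $\sum_{k=n/\tau_n}^n a_k z^k$ satisfying
\[
\max\{\Vert P_n\Vert_L,\ \Vert \varphi - P_n\Vert_K\} \leq C\Vert \varphi\Vert_{\overline{U}} \left(\frac{M_{K\cup L}}{m_K/2}\right)^{n/\tau_n} \bigl(G\,\theta_{n-\lfloor n/\tau_n\rfloor}\bigr)^{n - \lfloor n/\tau_n\rfloor},
\]
where the constants $C > 0$, $G \in (0,1)$ and the decreasing sequence $(\theta_m)_m$ with $\theta_m \to 1$ depend only on $K$, $L$, and $U$; in particular they are independent of $n$ and of the sequence $(\tau_n)_n$.

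All that remains is an asymptotic analysis of this bound under the hypothesis $\tau_n \to \infty$. Writing $r := M_{K\cup L}/(m_K/2)$, the $n$-th root of the right-hand side (after factoring out $\Vert \varphi\Vert_{\overline{U}}$) equals
\[
Y_n := C^{1/n}\, r^{1/\tau_n}\, \bigl(G\,\theta_{n-\lfloor n/\tau_n\rfloor}\bigr)^{(n-\lfloor n/\tau_n\rfloor)/n}.
\]
Since $\tau_n \to \infty$, we have $1/\tau_n \to 0$, so $r^{1/\tau_n} \to 1$; similarly $\lfloor n/\tau_n\rfloor/n \leq 1/\tau_n \to 0$, hence $(n - \lfloor n/\tau_n\rfloor)/n \to 1$ and $n - \lfloor n/\tau_n\rfloor \to \infty$, the latter forcing $\theta_{n - \lfloor n/\tau_n\rfloor} \to 1$. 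Combined with $C^{1/n} \to 1$, the product $Y_n$ converges to $G \in (0,1)$.

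To conclude, fix any $\theta \in (G, 1)$. By the limit just computed, there exists $N \in \N$ such that $Y_n \leq \theta$ for every $n \geq N$. Raising to the $n$-th power yields $\max\{\Vert P_n\Vert_L,\ \Vert \varphi - P_n\Vert_K\} \leq \Vert \varphi\Vert_{\overline{U}}\theta^n$, which is the desired estimate. There is no serious obstacle here beyond careful bookkeeping: the whole mechanism of the hypothesis $\tau_n \to \infty$ is that the potentially large base $r$ is raised only to the sublinear-in-$n$ exponent $n/\tau_n$, while the contracting factor $G\,\theta_{\cdot}$ keeps an exponent of order $n$, so its geometric decay dictates the leading asymptotics and absorbs the multiplicative constant $C$ as $n$ grows.
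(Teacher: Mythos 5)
Your proposal is correct and follows essentially the same route as the paper: both apply Theorem \ref{thm-BW-incomplete} with $\tau=\tau_n$, exploit that $C$, $G$ and $(\theta_m)_m$ are uniform in $\tau$, and observe that the $n$-th root of the bound tends to $G\in(0,1)$ because $r^{1/\tau_n}\to 1$ while the factor $(G\theta_{n-\lfloor n/\tau_n\rfloor})^{(n-\lfloor n/\tau_n\rfloor)/n}\to G$. The only (harmless) omission is that Theorem \ref{thm-BW-incomplete} requires $\tau>1$, so one should restrict to the cofinitely many $n$ with $\tau_n>1$, which is absorbed into the choice of $N$.
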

	
	Let us end this section by showing to which extent the above two corollaries are optimal. Firstly, a reformulation of Corollary 2.3 in \cite{PritskerVarga1998} in terms of incomplete polynomials, yields the following.
	\begin{prop}Let $D(a,r) \subset \C \setminus \{0\}$ be a non-empty disc. If any polynomial $\vp$ can be approximated uniformly on $\overline{D(a,r)}$ by polynomials of the form $\sum_{k= n/\tau}^{n} a_k z^k$, then $r\leq a\frac{\tau -1}{\tau +1}$.
	\end{prop}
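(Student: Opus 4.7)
The plan is to recognize the hypothesis as a Pritsker-Varga-type pair-of-approximation statement, then apply their characterization (Theorem \ref{thm-PV}) together with an explicit harmonic-measure computation. The class of incomplete polynomials $\sum_{k = n/\tau}^{n} a_k z^k$ being invariant under rotations $z \mapsto e^{i\psi} z$, I may assume that $a > 0$ is real. Since every such polynomial vanishes at $0$, approximating the constant polynomial $\vp = 1$ on $\overline{D(a,r)}$ forces $0 \notin \overline{D(a,r)}$, so $r < a$. Setting $\alpha = 1/(\tau - 1) > 0$, one has $\tau = (\alpha + 1)/\alpha$, and every incomplete polynomial $\sum_{k = n/\tau}^{n} a_k z^k$ can be written as $W^{d}(z) Q(z)$ with $W(z) = z^{\alpha}$, $d = n - \lfloor n/\tau \rfloor$ and $\deg Q \leq d$. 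Combining the hypothesis with Runge's theorem and a standard diagonal argument shows that every $f \in H(D(a,r))$ is locally uniformly approximable on $D(a,r)$ by such products, so that $(D(a,r), z^{\alpha})$ is a pair of approximation in the sense of Pritsker and Varga.

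By Theorem \ref{thm-PV}, the measure
\[
\mu \;=\; (1+\alpha)\, \omega(\infty,\cdot, D^{*}) \;-\; \alpha\, \omega(0,\cdot, D^{*}), \qquad D^{*} := \overline{\C} \setminus \overline{D(a,r)},
\]
must therefore be non-negative. I will compute both harmonic measures through the conformal isomorphism $\phi \colon D^{*} \to \D$, $\phi(z) = r/(z - a)$, which restricts to an arc-length-preserving bijection $\partial D(a,r) \to \T$ and sends $\infty \mapsto 0$ and $0 \mapsto -r/a$. Pulling back the corresponding Poisson kernels via $\phi$, $\omega(\infty,\cdot, D^{*})$ has constant density $1$ on $\partial D(a,r)$ with respect to normalized arc length, while $\omega(0,\cdot, D^{*})$ has density $(1 - r^{2}/a^{2}) / |e^{i\theta} + r/a|^{2}$, whose supremum $(a+r)/(a-r)$ is attained at $\theta = \pi$.

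Consequently positivity of $\mu$ amounts to $(a+r)/(a-r) \leq (1+\alpha)/\alpha = \tau$, which rearranges precisely to $r \leq a(\tau - 1)/(\tau + 1)$. The only non-routine step is the passage, in the first paragraph, from uniform approximation of polynomials on $\overline{D(a,r)}$ to locally uniform approximation of arbitrary holomorphic functions on $D(a,r)$; once that is in place, the remainder reduces to a direct Poisson-kernel calculation.
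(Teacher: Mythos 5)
Your route is the paper's route: the paper disposes of this proposition by citing Corollary 2.3 of Pritsker--Varga, which is exactly the specialisation of their Theorem 2.2 to a disc, and your harmonic-measure computation (conformal invariance via $\phi(z)=r/(z-a)$, Poisson density at $-r/a$ with maximum $\frac{a+r}{a-r}$ at $\theta=\pi$, whence $\tau\geq\frac{a+r}{a-r}$) is precisely what underlies that corollary; the computation is correct. Two points in your reduction should be tightened, though neither is fatal. First, the identity $\sum_{k=\lfloor n/\tau\rfloor}^{n}a_kz^k=W^{d}(z)Q(z)$ with $W=z^{\alpha}$, $d=n-\lfloor n/\tau\rfloor$ and $\deg Q\leq d$ holds only when $n/\tau$ is an integer; in general $\alpha d-\lfloor n/\tau\rfloor$ lies in $[0,\tfrac{\tau}{\tau-1})$, so the incomplete polynomial is $W^{d}$ times $z^{-\delta}\tilde Q$ with a bounded but nonzero fractional exponent $\delta$, and the definition of a pair of approximation moreover demands the form $W^{n}P_n$ for a full sequence of $n$, not just along the degrees produced by the hypothesis. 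You must either restrict to $n$ in a suitable arithmetic progression (for rational $\tau$), or observe that the bounded factor $|z|^{-\delta}$ on a compact set avoiding $0$ is harmless for the $n$-th-root asymptotics that drive the necessity half of Theorem 2.2 --- but the latter means opening up Pritsker--Varga's proof rather than quoting their statement as a black box. Second, it is not true that ``every such polynomial vanishes at $0$'': for $n<\tau$ one has $\lfloor n/\tau\rfloor=0$ and constants are admissible, so approximating $\vp=1$ gives nothing. The hypothesis $D(a,r)\subset\C\setminus\{0\}$ already yields $r\leq a$; to exclude the boundary case $r=a$ either test against $\vp=1+z^N$ with $N>\tau$ (whose approximants must eventually have $n\geq\tau$ and hence vanish at $0$), or simply apply the result on $\overline{D(a,r')}$ for every $r'<a$ and let $r'\to a$.
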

	Thus, if we choose $K$ with non-empty interior in Corollary \ref{cor-incomplete-1}, then there exists a $\tau_K >1$ such that, for any $1<\tau < \tau_K$, the set of all incomplete polynomials of the form $\sum_{k= n/\tau}^{n} a_k z^k$ is not dense in $A(K)$. This shows that polynomials of the form $\sum_{k= n/\tau}^{n} a_k z^k$ in Corollary \ref{cor-incomplete} cannot be replaced by polynomials of the form $\sum_{k= n/\tau_n}^{n} a_k z^k$, with $\tau_n$ decreasing to $1$. 
	
	The same type of argument can be used to show that Corollary \ref{cor-incomplete} is optimal. Indeed, using Proposition 2.4 in \cite{CharpentierMaronikolakisII2025}, we can show the following: given any $\tau >1$, there exists a compact set $K\subset \C\setminus \{0\}$, with non-empty interior, such that the set of all polynomials of the form $\sum_{k= n/\tau}^{n} a_k z^k$ is not dense in $A(K)$. Therefore, the condition $\tau _n \to +\infty$ in Corollary \ref{cor-incomplete} cannot be replaced by $(\tau_n)_n$ bounded.
	
	\medskip
	
	At this point of our study, we are naturally led to the problem of approximation by incomplete polynomials of the form $\sum_{k= n/\tau_n}^{n} a_k z^k$, with $\tau_n \to 1$. We shall discuss it in the next subsection.
	
	\subsection{Approximation by strongly incomplete polynomials - Proof of Theorem \hyperref[thme]{E}}
	
	In the sequel, we will refer to incomplete polynomials of the form $\sum_{k= n/\tau_n}^{n} a_k z^k$, with $\tau_n \to 1$, as \textit{strongly} incomplete polynomials. We have not found in the literature research dealing with approximation by strongly incomplete polynomials, although, according to \cite{SaffVarga1979,Golitschek1980}, it is most likely that any sequence of strongly incomplete polynomials that would be bounded on $\D$ would converge to $0$ at least in some neighbourhood of $0$. However, we are able to show that approximation by strongly incomplete polynomials is possible when $K$ is very \textit{small}, namely, when $K$ is polar.
	
	This paragraph is dedicated to proving Theorem \hyperref[thme]{E}. For convenience, let us recall its statement.
	
	\begin{theoremE}Let $K\subset \C\setminus \{0\}$ be a polar compact set and let $U$ be a bounded open set containing $K$. There exist a sequence $(\tau_n)_n$ in $(1,\infty)$ with $\tau_n\to1$, a sequence $(\eta_n)_n$ of positive real numbers with $\eta_n\to 0$ and $N\in \N$, such that for any $\vp\in A(\overline{U})$ and any $n\geq N$, there exists a polynomial $P_n$ of the form $\sum_{k= n/\tau_n}^{n}a_kz^k$ such that
		\[
		\Vert P_n-\vp\Vert_K\leq \Vert \vp\Vert _{\overline{U}}\eta_n^{n}.
		\]
	\end{theoremE}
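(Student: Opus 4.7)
The plan is to adapt the integral-representation proof of Theorem~\ref{thm-BW-incomplete} using polynomials tailored to the polar set $K$, and then to feed the resulting fixed-$\tau$ estimate into a diagonal construction so that the valuation $\lfloor n/\tau_n\rfloor$ eventually exhausts almost the entire degree.

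First I would set up the geometry. Since $0\notin K$ and $K$ is compact, after shrinking $U$ we may assume $0\notin\overline{U}$, and we fix a Jordan curve $\Gamma\subset U\setminus K$ encircling $K$; this is possible because a polar compact set has connected complement (any bounded component of $\overline{\C}\setminus K$ would give a non-degenerate continuum in $K$, contradicting $c(K)=0$). Let $m_U:=\min_{z\in\overline{U}}|z|>0$ and $M_K:=\max_{z\in K}|z|$. The potential-theoretic input is: a Leja sequence for $K$ produces monic polynomials $q_m$ of degree $m$ with all zeros in $K$ and $\Vert q_m\Vert_K^{1/m}\to c(K)=0$. Because the zeros lie in $K$, $|q_m(z)|\geq\dist(z,K)^m$ for every $z\notin K$; setting $\sigma_m:=\Vert q_m\Vert_K^{1/m}/\dist(\Gamma,K)$ we therefore have $\sigma_m\to0$.

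The core step is a fixed-$\tau$ estimate: for every $\tau>1$ and every $\epsilon>0$ there exists $N(\tau,\epsilon)\in\N$ such that for any $n\geq N(\tau,\epsilon)$ and any $\varphi\in A(\overline{U})$, there is a polynomial $P_n$ of the form $\sum_{k=n/\tau}^n a_kz^k$ satisfying $\Vert P_n-\varphi\Vert_K\leq\epsilon^n\Vert\varphi\Vert_{\overline{U}}$. To prove it, I would follow the proof of Theorem~\ref{thm-BW-incomplete} verbatim with $L=\emptyset$, but use the above Leja polynomials $q_{m_n}$ in place of the Fekete polynomials of $K\cup L$, where $k_n:=\lfloor n/\tau\rfloor$ and $m_n:=n-k_n$. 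This yields a polynomial $Q_{m_n}$ of degree $\leq m_n-1$ approximating $\psi_n(z):=\varphi(z)/z^{k_n}$ on $K$, and $P_n(z):=z^{k_n}Q_{m_n}(z)$ satisfies
\[
\Vert P_n-\varphi\Vert_K \leq C_0 \left(\frac{M_K}{m_U}\right)^{k_n}\sigma_{m_n}^{m_n}\Vert\varphi\Vert_{\overline{U}},
\]
where $C_0:=l(\Gamma)/(2\pi\dist(\Gamma,K))$. Since $\tau$ is held fixed, $m_n/n\to(\tau-1)/\tau>0$, so $\sigma_{m_n}^{m_n/n}\to0$ while $(M_K/m_U)^{k_n/n}$ stays bounded, hence the $n$-th root of the error tends to $0$.

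I would then diagonalize. Pick $\tau_j\downarrow1$ and $\epsilon_j\downarrow0$; set $N_j:=N(\tau_j,\epsilon_j)$, and after replacing $N_j$ by $\max_{l\leq j}N_l+1$ we may assume $(N_j)_j$ is strictly increasing. Define $\tau_n:=\tau_j$ and $\eta_n:=\epsilon_j$ for $N_j\leq n<N_{j+1}$. Then $\tau_n\to1$, $\eta_n\to0$, and the fixed-$\tau$ estimate delivers a polynomial $P_n$ of the required incomplete form with $\Vert P_n-\varphi\Vert_K\leq\eta_n^n\Vert\varphi\Vert_{\overline{U}}$ for all $n\geq N_1$. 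The main obstacle is the potential-theoretic input above: producing, for an arbitrary polar compact $K$, monic polynomials $q_m$ of degree $m$ with zeros in $K$ and $\Vert q_m\Vert_K^{1/m}\to0$. This is precisely where the proof of Theorem~\ref{thm-BW-incomplete} fails for polar $K\cup L$ (Bernstein's lemma there requires dividing by $c(K\cup L)=0$), and it is resolved by using Leja points of $K$, which always lie in $K$ and whose Chebyshev-type norms tend to $c(K)=0$ whenever $K$ is polar.
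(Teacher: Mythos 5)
Your argument is correct, and it reaches Theorem E by a genuinely different route from the paper. The paper fattens $K$ into the non-polar neighbourhoods $K_n=K^{1/n}$ and applies the general Theorem \ref{thm-BW-incomplete} to each of them; this forces it to control the ratio $\delta_n(K^{1/n})/c(K^{1/n})$ uniformly in $n$ -- which is precisely the content of the two new potential-theoretic lemmas (Lemma \ref{dilation} and Lemma \ref{conv}) -- and to exploit the blow-up $G_n\to0$ of $\max_{\Gamma}e^{-g_{\overline{\C}\setminus K_n}(\cdot,\infty)}$, after which $\tau_n$ is chosen by an explicit formula in $\log(\theta G_n)$. You instead stay on $K$ itself and repair the one step of the proof of Theorem \ref{thm-BW-incomplete} that breaks for polar sets, namely Bernstein's lemma: taking monic $q_m$ with all zeros in $K$ and $\Vert q_m\Vert_K^{1/m}\to c(K)=0$, together with the trivial bound $|q_m(z)|\geq\dist(z,K)^m$ off $K$, gives $\Vert q_m\Vert_K/\min_{\Gamma}|q_m|\leq\sigma_m^m$ with $\sigma_m\to0$, hence a fixed-$\tau$ estimate whose $n$-th root tends to $0$ uniformly in $\vp$, and a routine diagonalization over $\tau_j\downarrow1$ finishes. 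Your route is shorter and bypasses Lemmas \ref{dilation} and \ref{conv} entirely (though these are of independent interest and motivate the paper's first conjecture); what it gives up is any quantitative link between the speed of $\tau_n\to1$, the speed of $\eta_n\to0$, and the geometry of $K$, which the paper's explicit choice of $\tau_n$ in terms of the Green functions $G_n$ retains. Two small points to tighten: the polynomials you need are most safely taken to be Fekete polynomials of $K$ (Theorem 5.5.4 in Ransford gives $\Vert q_m\Vert_K^{1/m}\leq(m+1)^{1/m}\delta_{m+1}(K)\to c(K)$ even in the polar case, with zeros in $K$), rather than a Leja sequence, for which the polar case is less immediate; and your parenthetical justification that $\overline{\C}\setminus K$ is connected is imprecise -- the boundary of a bounded complementary component need not contain a non-degenerate continuum of $K$, but it is a non-polar compact subset of $K$, which gives the same contradiction.
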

	For the proof of this theorem, we will need preliminary results that might also be of independent interest. We refer to Section \ref{pot} for the definitions of capacity and energy. In the sequel, for a non-empty compact set $K\subset \C$ and for $\varepsilon>0$, we will denote by $K^{\varepsilon}$ the closure of the $\varepsilon$-neighbourhood of $K$,
	\[
	K^{\varepsilon}:=\left\{z\in\C:\,\text{dist}(z,K)\leq\varepsilon\right\}.
	\]
	\begin{lemma}\label{dilation}
		Let $K\subseteq\C$ be a non-empty compact set. For any $\varepsilon_1>\varepsilon_2>0$, one has
		\[
		c(K^{\varepsilon_1})\leq2\frac{\varepsilon_1}{\varepsilon_2}c(K^{\varepsilon_2}).
		\]
	\end{lemma}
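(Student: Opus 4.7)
The plan is to derive the inequality from a classical maximum-modulus estimate for polynomials on concentric disks, together with the Chebyshev characterization of the logarithmic capacity. This strategy in fact yields the slightly stronger bound $c(K^{\varepsilon_1}) \leq (\varepsilon_1/\varepsilon_2) c(K^{\varepsilon_2})$, of which the stated inequality is a trivial consequence.

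The first step is to establish the following standard estimate, valid for every polynomial $P$ of degree at most $n$, every $y \in \C$, and every $r_1 \geq r_2 > 0$:
\[
\max_{|z-y|\leq r_1} |P(z)| \leq \pare*{\frac{r_1}{r_2}}^n \max_{|z-y|\leq r_2} |P(z)|.
\]
This follows from the maximum modulus principle applied to the reversed polynomial $Q(w) := w^n P(y + 1/w)$, which (after expansion) is a polynomial in $w$ of degree at most $n$: the map $s \mapsto \max_{|w| \leq s}|Q(w)|$ is non-decreasing, and evaluating it at $s_1 = 1/r_1 \leq 1/r_2 = s_2$ translates by direct computation into the displayed bound.

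The second step exploits the trivial identity $K^{\varepsilon_i} = \bigcup_{y \in K}\overline{D(y, \varepsilon_i)}$, $i = 1, 2$. Taking suprema over $y \in K$ above immediately yields
\[
\Vert P \Vert_{K^{\varepsilon_1}} \leq \pare*{\frac{\varepsilon_1}{\varepsilon_2}}^n \Vert P \Vert_{K^{\varepsilon_2}}
\]
for every polynomial $P$ of degree at most $n$. Applying this to the $n$-th Chebyshev polynomial $T_n$ of $K^{\varepsilon_2}$ (the monic polynomial of degree $n$ minimizing the sup-norm on $K^{\varepsilon_2}$), with sup-norm $t_n(K^{\varepsilon_2})$, one obtains $t_n(K^{\varepsilon_1}) \leq \Vert T_n\Vert_{K^{\varepsilon_1}} \leq (\varepsilon_1/\varepsilon_2)^n t_n(K^{\varepsilon_2})$. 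Taking $n$-th roots and letting $n \to \infty$, the classical potential-theoretic identity $\lim_n t_n(E)^{1/n} = c(E)$ (the Chebyshev counterpart of the Fekete-Szegő theorem recalled in Section \ref{pot}, see \cite{Ransford1995}) gives $c(K^{\varepsilon_1}) \leq (\varepsilon_1/\varepsilon_2) c(K^{\varepsilon_2})$.

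No substantial obstacle is expected: each ingredient is either elementary (the max-modulus estimate and the covering identity) or a classical fact from potential theory (the Chebyshev characterization of capacity). The only minor observation is that the factor $2$ in the statement of the lemma is in fact not needed, since the argument produces the sharper factor $\varepsilon_1/\varepsilon_2$; this refinement, however, plays no role in the subsequent application to Theorem~\hyperref[thme]{E}.
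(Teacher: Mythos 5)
Your proof is correct, and it takes a genuinely different---and considerably shorter---route than the paper's. You reduce everything to the elementary growth estimate $\max_{|z-y|\le r_1}|P(z)|\le (r_1/r_2)^n\max_{|z-y|\le r_2}|P(z)|$ for polynomials of degree at most $n$ (a special case of the Bernstein--Walsh inequality for a disc; your reversed-polynomial argument is a valid elementary proof of it), combine it with the covering identity $K^{\varepsilon}=\bigcup_{y\in K}\overline{D(y,\varepsilon)}$, and pass to capacities via the classical identity between the Chebyshev constant and the capacity (see \cite{Ransford1995}). The paper instead works directly with equilibrium measures: it covers $K^{\varepsilon_1}$ by finitely many discs of radius $\varepsilon_1+\varepsilon$ centred in $K$, splits the equilibrium measure of that union, pushes each piece forward by the contraction onto the corresponding disc of radius $\varepsilon_2$, and estimates the energy term by term; the factor $2$ in the statement comes precisely from a $\log 2$ loss in the cross terms of that computation. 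Your argument avoids all of this machinery and yields the sharper constant $\varepsilon_1/\varepsilon_2$ in place of $2\varepsilon_1/\varepsilon_2$. This is more than cosmetic: inserted into the proof of Lemma \ref{conv}, the sharper bound gives $\limsup_n \delta_n(K^{1/n})/c(K^{1/n})\le 1$, and since $\delta_n(E)\ge c(E)$ always (the $n$-th diameters decrease to the capacity) while $c(K^{1/n})>0$ (as $K^{1/n}$ contains a disc), this would settle affirmatively the conjecture $\lim_n\delta_n(K^{1/n})/c(K^{1/n})=1$ stated in Section 2.4. For the application to Theorem \hyperref[thme]{E} itself the constant is, as you note, irrelevant.
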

	\begin{proof}
		Let $\varepsilon>0$. Clearly, $K^{\varepsilon_1}\subseteq\cup_{z\in K}D(z,\varepsilon_1+\varepsilon)$, so, by compactness, there exists $N\in\N$ and distinct points $z_1,\dots,z_N\in K$ such that
		\[
		K^{\varepsilon_1}\subseteq\cup_{k=1}^ND(z_k,\varepsilon_1+\varepsilon)\subseteq\cup_{k=1}^N\overline{D(z_k,\varepsilon_1+\varepsilon)}=:\tilde{K}.
		\]
		Since $\tilde{K}$ is compact, there exists an equilibrium measure $\mu$ for $\tilde{K}$. We set
		\[
		\mu_1=\mu_{|\overline{D(z_1,\varepsilon_1+\varepsilon)}}\quad\text{and}\quad\mu_k=\mu_{|\overline{D(z_k,\varepsilon_1+\varepsilon)}\setminus\bigcup_{i=1}^{k-1}\overline{D(z_i,\varepsilon_1+\varepsilon)}},\,k=2,\dots,N. \]
		We then have that $\mu=\sum_{k=1}^{N}\mu_k$. Moreover, $\mu$ is actually supported on $\partial\tilde{K}$ so, for $k=1,\dots,N$, $\mu_k$ is supported on $C_k:=\partial D(z_k,\varepsilon_1+\varepsilon)\setminus \left(\cup_{i< k} \overline{D(z_i,\varepsilon_1+\varepsilon)}\cup\cup_{i> k} D(z_i,\varepsilon_1+\varepsilon)\right)$.
		
		Let $k=1,\dots,N$ and let $T_k:\overline{D(z_k,\varepsilon_1+\varepsilon)}\to\overline{D(z_k,\varepsilon_2)}$ be the function defined by
		\[
		T_k(z)=\frac{\varepsilon_2}{\varepsilon_1+\varepsilon}z+\left(1-\frac{\varepsilon_2}{\varepsilon_1+\varepsilon}\right)z_k,\,z\in\overline{D(z_k,\varepsilon_1+\varepsilon)}.
		\]
		Let $\nu_k$ be the push-forward measure of $\mu_k$ by $T_k$, that is, $\nu_k(B)=\mu_k\left(T_k^{-1}(B)\right)$ for any Borel measurable subset $B$ of $\overline{D(z_k,\varepsilon_2)}$. Since $T_k$ is invertible, we can say that $\mu_k$ is the push-forward measure of $\nu_k$ by $T_k^{-1}$, where $T_k^{-1}(z)=\frac{\varepsilon_1+\varepsilon}{\varepsilon_2}z-\left(\frac{\varepsilon_1+\varepsilon}{\varepsilon_2}-1\right)z_k$ for $z\in\overline{D(z_k,\varepsilon_2)}$. Let also $C_k'=T_k(C_k)$.
		
		\medskip
		
		Let us now estimate the energy $I(\mu)$ of $\mu$:
		\begin{align*}
			\begin{autobreak}
				\MoveEqLeft[0]
				I(\mu)=\int_{\tilde{K}}\int_{\tilde{K}}\log|z-w|d\mu(z)d\mu(w)
				=\int_{\tilde{K}}\int_{\tilde{K}}\log|z-w|d\left(\sum_{k=1}^{N}\mu_k\right)(z)d\left(\sum_{k=1}^{N}\mu_k\right)(w)
				=\sum_{k=1}^N\int_{C_k}\int_{C_k}\log|z-w|d\mu_k(z)d\mu_k(w)
				+2\sum_{1\leq k<j\leq N}\int_{C_j}\int_{C_k}\log|z-w|d\mu_k(z)d\mu_j(w)
			\end{autobreak}
		\end{align*}
		First, for $k=1,\dots,N$, we have
		\begin{align*}
			\begin{autobreak}
				\MoveEqLeft[0]
				\int_{C_k}\int_{C_k}\log|z-w|d\mu_k(z)d\mu_k(w)
				=\int_{C_k'}\int_{C_k'}\log|T_k^{-1}(z')-T_k^{-1}(w')|d\nu_k(z')d\nu_k(w')
				=\int_{C_k'}\int_{C_k'}\log\left|\frac{\varepsilon_1+\varepsilon}{\varepsilon_2}z'-\left(\frac{\varepsilon_1+\varepsilon}{\varepsilon_2}-1\right)z_k-\frac{\varepsilon_1+\varepsilon}{\varepsilon_2}w'+\left(\frac{\varepsilon_1+\varepsilon}{\varepsilon_2}-1\right)z_k\right|d\nu_k(z')d\nu_k(w')
				=\int_{C_k'}\int_{C_k'}\log\left|z'-w'\right|d\nu_k(z')d\nu_k(w')+\log\left(\frac{\varepsilon_1+\varepsilon}{\varepsilon_2}\right)\nu_k(C_k')^2.
			\end{autobreak}
		\end{align*}
		Next, for $1\leq k<j\leq N$, we similarly have
		\begin{align*}
			\begin{autobreak}
				\MoveEqLeft[0]
				\int_{C_j}\int_{C_k}\log|z-w|d\mu_k(z)d\mu_j(w)
				=\int_{C_j'}\int_{C_k'}\log|T_k^{-1}(z')-T_k^{-1}(w')|d\nu_k(z')d\nu_k(w')
				=\int_{C_j'}\int_{C_k'}\log\left|\frac{\varepsilon_1+\varepsilon}{\varepsilon_2}z'-\left(\frac{\varepsilon_1+\varepsilon}{\varepsilon_2}-1\right)z_k-\frac{\varepsilon_1+\varepsilon}{\varepsilon_2}w'+\left(\frac{\varepsilon_1+\varepsilon}{\varepsilon_2}-1\right)z_j\right|d\nu_k(z')d\nu_j(w')
				=\int_{C_j'}\int_{C_k'}\log\left|\frac{\varepsilon_1+\varepsilon}{\varepsilon_2}(z'-w')-\left(\frac{\varepsilon_1+\varepsilon}{\varepsilon_2}-1\right)(z_k-z_j)\right|d\nu_k(z')d\nu_j(w')
				=\int_{C_j'}\int_{C_k'}\log\left|z'-w'\right|d\nu_k(z')d\nu_j(w')+\log\left(\frac{\varepsilon_1+\varepsilon}{\varepsilon_2}\right)\nu_k(C_k')\nu_j(C_j')
				+\int_{C_j'}\int_{C_k'}\log\left|1-\left(1-\frac{\varepsilon_2}{\varepsilon_1+\varepsilon}\right)\frac{z_k-z_j}{z'-w'}\right|d\nu_k(z')d\nu_j(w')
				\leq \int_{C_j'}\int_{C_k'}\log\left|z'-w'\right|d\nu_k(z')d\nu_j(w')+\log\left(\frac{\varepsilon_1+\varepsilon}{\varepsilon_2}\right)\nu_k(C_k')\nu_j(C_j')
				+\int_{C_j'}\int_{C_k'}\log\left(1+\left(1-\frac{\varepsilon_2}{\varepsilon_1+\varepsilon}\right)\frac{|z_k-z_j|}{|z'-w'|}\right)d\nu_k(z')d\nu_j(w').
			\end{autobreak}
		\end{align*}
		Let us fix $1\leq k<j\leq N$, $z'\in C_k'$ and $w'\in C_j'$. We shall estimate the quantity $\frac{|z_k-z_j|}{|z'-w'|}$. Let $z\in C_k$ and $w\in C_j$ be such that $z'=\frac{\varepsilon_2}{\varepsilon_1+\varepsilon}z+\left(1-\frac{\varepsilon_2}{\varepsilon_1+\varepsilon}\right)z_k$ and $w'=\frac{\varepsilon_2}{\varepsilon_1+\varepsilon}w+\left(1-\frac{\varepsilon_2}{\varepsilon_1+\varepsilon}\right)z_j$. Since the sets $C_k$ and $C_j$ are contained in different half-planes created by the perpendicular bisector of the line segment $[z_k,z_j]$, we have $\langle z-w,z_k-z_j\rangle>0$ (where $\langle\cdot,\cdot\rangle$ denotes the standard inner product on $\R^2$). Therefore
		\begin{align*}
			|z'-w'|^2 & =\left|\frac{\varepsilon_2}{\varepsilon_1+\varepsilon}z+\left(1-\frac{\varepsilon_2}{\varepsilon_1+\varepsilon}\right)z_k-\frac{\varepsilon_2}{\varepsilon_1+\varepsilon}w-\left(1-\frac{\varepsilon_2}{\varepsilon_1+\varepsilon}\right)z_j\right|^2\\
			& =\left|\frac{\varepsilon_2}{\varepsilon_1+\varepsilon}(z-w)+\left(1-\frac{\varepsilon_2}{\varepsilon_1+\varepsilon}\right)(z_k-z_j)\right|^2\\
			& =\left(\frac{\varepsilon_2}{\varepsilon_1+\varepsilon}\right)^2\left|z-w\right|^2+\left(1-\frac{\varepsilon_2}{\varepsilon_1+\varepsilon}\right)^2\left|z_k-z_j\right|^2\\
			& +2\left(\frac{\varepsilon_2}{\varepsilon_1+\varepsilon}\right)\left(1-\frac{\varepsilon_2}{\varepsilon_1+\varepsilon}\right)\langle z-w,z_k-z_j\rangle
			\geq\left(1-\frac{\varepsilon_2}{\varepsilon_1+\varepsilon}\right)^2\left|z_k-z_j\right|^2,
		\end{align*}
		so we get
		$$\left(1-\frac{\varepsilon_2}{\varepsilon_1+\varepsilon}\right)\frac{|z_k-z_j|}{|z-w|}\leq1.$$
		
		We deduce that
		\begin{multline*}
			\int_{C_j'}\int_{C_k'}\log\left(1+\left(1-\frac{\varepsilon_2}{\varepsilon_1+\varepsilon}\right)\frac{|z_k-z_j|}{|z'-w'|}\right)d\nu_k(z')d\nu_j(w')
			\leq\int_{C_j'}\int_{C_k'}\log(2)d\nu_k(z')d\nu_j(w')\\
			=\log(2)\nu_k(C_k')\nu_j(C_j'),
		\end{multline*}
		and thus
		\begin{multline*}
			\int_{C_j}\int_{C_k}\log|z-w|d\mu_k(z)d\mu_j(w)\\
			\leq\int_{C_j'}\int_{C_k'}\log\left|z'-w'\right|d\nu_k(z')d\nu_j(w')+\log\left(2\frac{\varepsilon_1+\varepsilon}{\varepsilon_2}\right)\nu_k(C_k')\nu_j(C_j').
		\end{multline*}
		Putting everything together, we obtain
		\begin{align*}
			\begin{autobreak}
				\MoveEqLeft[0]
				I(\mu)=\sum_{k=1}^N\int_{C_k}\int_{C_k}\log|z-w|d\mu_k(z)d\mu_k(w)
				+2\sum_{1\leq k<j\leq N}\int_{C_j}\int_{C_k}\log|z-w|d\mu_k(z)d\mu_j(w)
				\leq\sum_{k=1}^N\int_{C_k'}\int_{C_k'}\log\left|z'-w'\right|d\nu_k(z')d\nu_k(w')+2\sum_{1\leq k<j\leq N}\int_{C_j'}\int_{C_k'}\log\left|z'-w'\right|d\nu_k(z')d\nu_j(w')
				+\sum_{k=1}^N\log\left(\frac{\varepsilon_1+\varepsilon}{\varepsilon_2}\right)\nu_k(C_k')^2+2\sum_{1\leq k<j\leq N}\log\left(2\frac{\varepsilon_1+\varepsilon}{\varepsilon_2}\right)\nu_k(C_k')\nu_j(C_j')
				\leq I(\nu)+\log\left(2\frac{\varepsilon_1+\varepsilon}{\varepsilon_2}\right).
			\end{autobreak}
		\end{align*}
		Taking exponentials, we get that
		$$e^{I(\mu)}\leq2\frac{\varepsilon_1+\varepsilon}{\varepsilon_2}e^{I(\nu)}.$$
		Since $K^{\varepsilon_1}\subseteq\tilde{K}$ and since the measure $\nu$ is supported on $K^{\varepsilon_2}$, we finally obtain
		$$c(K^{\varepsilon_1})\leq c(\tilde{K})=e^{I(\mu)}\leq2\frac{\varepsilon_1+\varepsilon}{\varepsilon_2}e^{I(\nu)}\leq 2\frac{\varepsilon_1+\varepsilon}{\varepsilon_2}c(K^{\varepsilon_2}).$$
		Letting $\varepsilon\to0$, we get the desired result.
	\end{proof}
	
	Using the Fekete-Szeg\"o Theorem, it is not difficult to see that $\lim_n \delta_n\left(K^{\frac{1}{n}}\right) =\lim_n c\left(K^{\frac{1}{n}}\right)=c(K)$, for any compact set $K\subset \C$. In particular, if $K$ is not polar, we have
	\[
	\lim_{n\to\infty}\frac{\delta_n\left(K^{\frac{1}{n}}\right)}{c\left(K^{\frac{1}{n}}\right)}=1.
	\]
	More generally, we can prove the following.
	\begin{lemma}\label{conv}
		Let $K\subseteq\C$ be a compact set. Then,
		\[
		\limsup_{n\to\infty}\frac{\delta_n\left(K^{\frac{1}{n}}\right)}{c\left(K^{\frac{1}{n}}\right)}\leq2.
		\]
	\end{lemma}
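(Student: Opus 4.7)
My plan is to split into the non-polar and polar cases and to combine Lemma \ref{dilation} with the classical Fekete-Szegő theorem via a diagonal extraction. If $K$ is non-polar, then $c(K)>0$ and the decreasing sequence $c(K^{1/n})$ has limit $c(K)>0$. Using the double monotonicity of the $n$-th diameter (monotone in the set, so $\delta_n(K^{1/n})\le \delta_n(K^{1/m})$ for $m\le n$, and monotone in the index $n$ for a fixed compact set), I would show that $\delta_n(K^{1/n})$ is itself non-increasing in $n$; combined with $\delta_n(K^{1/n})\ge c(K^{1/n})\ge c(K)$ and with $\delta_n(K^{1/m})\to c(K^{1/m})$ for each fixed $m$, letting $m\to\infty$ gives $\delta_n(K^{1/n})\to c(K)$. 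Hence the ratio converges to $1$ and the claim is trivial in this case.

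The substantive case is $K$ polar. Then $c(K^{1/n})\to 0$, so both numerator and denominator vanish. The plan is to exploit the key inequality obtained by combining monotonicity of $\delta_n$ in the set with Lemma \ref{dilation} applied to $\varepsilon_1=1/m$ and $\varepsilon_2=1/n$ (for $m\le n$), which yields $c(K^{1/m})\le (2n/m)c(K^{1/n})$. Putting these together gives
\[
\frac{\delta_n(K^{1/n})}{c(K^{1/n})}\le \frac{2n}{m}\cdot \frac{\delta_n(K^{1/m})}{c(K^{1/m})}\qquad (m\le n),
\]
where the factor $2n/m$ is the one supplied by Lemma \ref{dilation} (and the $2$ is exactly the factor appearing in the statement), while the remaining ratio concerns the relative Fekete-Szegő gap on the fixed, non-polar compact $K^{1/m}$.

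From here, I would perform a diagonal extraction. The Fekete-Szegő theorem (Theorem 5.5.2 in \cite{Ransford1995}) ensures that for each fixed $m$ one has $\delta_N(K^{1/m})/c(K^{1/m})\to 1$ as $N\to\infty$; so I can select, for each $m$, the smallest $N_m\ge m$ with $\delta_N(K^{1/m})/c(K^{1/m})\le 1+1/m$ for all $N\ge N_m$, and then set $m(n):=\max\{m\le n:N_m\le n\}$, obtaining $m(n)\to\infty$ and $\delta_n(K^{1/m(n)})/c(K^{1/m(n)})\to 1$. Plugging $m=m(n)$ into the key inequality reduces the problem to showing $n/m(n)\to 1$.

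I expect this last point to be the main obstacle: to guarantee $n/m(n)\to 1$ I need a quantitative form of the Fekete-Szegő convergence on the family $(K^{1/m})_m$ of thickenings, ensuring that the thresholds $N_m$ grow slowly enough in $m$ that $m(n)$ can track $n$ up to lower-order terms. If this rate-control can be established (using for instance Bernstein-type estimates or a uniform extremal polynomial argument on the sets $K^{1/m}$, which have non-empty interior and regular boundary for large $m$), then taking $\limsup_n$ in the key inequality with $m=m(n)$ gives the desired bound of $2$; examples such as $K=\{0,1\}$ suggest this factor is sharp and comes precisely from the factor in Lemma \ref{dilation}.
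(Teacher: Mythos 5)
Your framework is sound up to the final step, and your non-polar case is fine (the paper itself notes that the ratio tends to $1$ there). The key inequality
\[
\frac{\delta_n\left(K^{\frac{1}{n}}\right)}{c\left(K^{\frac{1}{n}}\right)}\leq \frac{2n}{m}\cdot \frac{\delta_n\left(K^{\frac{1}{m}}\right)}{c\left(K^{\frac{1}{m}}\right)},\qquad m\leq n,
\]
is correct. But the gap you flag yourself is genuine and is exactly where the proof lives: the diagonal extraction only yields $m(n)\to\infty$, not $n/m(n)\to 1$. When $K$ is polar, $c\left(K^{1/m}\right)\to 0$ and the qualitative Fekete--Szeg\H{o} theorem gives no uniformity in $m$, so the thresholds $N_m$ may grow arbitrarily fast, $m(n)$ may be $o(n)$, and the factor $2n/m(n)$ then blows up. There is no way to close this by soft arguments; the rate control you would need is essentially equivalent to the quantitative estimate that the actual proof uses directly.

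That missing ingredient is the explicit inequality hidden inside the standard proof of the Fekete--Szeg\H{o} theorem (Theorem 5.5.2 in \cite{Ransford1995}): for a compact set $L$ and its $\varepsilon$-neighbourhood $L^{\varepsilon}$ one has $\varepsilon^{\frac{1}{n-1}}\,\delta_n(L)\leq c\left(L^{\varepsilon}\right)^{\frac{n}{n-1}}$. The paper applies this with $L=K^{\frac{1}{n}}$ and $\varepsilon=\frac{1}{n-1}-\frac{1}{n}$, so that $L^{\varepsilon}=K^{\frac{1}{n-1}}$; this compares $\delta_n\left(K^{\frac{1}{n}}\right)$ directly with the capacity of the \emph{adjacent} thickening, with an explicit polynomial prefactor $\left(n(n-1)\right)^{\frac{1}{n-1}}\to 1$, and no case split or diagonal extraction is needed. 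Lemma \ref{dilation} is then used twice: once (as in your proposal) to bound $c\left(K^{\frac{1}{n-1}}\right)/c\left(K^{\frac{1}{n}}\right)\leq 2\frac{n}{n-1}$ --- this is where the constant $2$ comes from, as you correctly guessed --- and a second time, with $\varepsilon_1=1$ and $\varepsilon_2=\frac{1}{n-1}$, to control the stray factor $c\left(K^{\frac{1}{n-1}}\right)^{-\frac{1}{n}}$ arising from the exponent $\frac{n}{n-1}$; this second application is also absent from your outline. To repair your proof you should replace the appeal to the qualitative Fekete--Szeg\H{o} limit by this quantitative neighbour-to-neighbour estimate, i.e.\ take $m=n-1$ from the start.
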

	\begin{proof}
		For $n\geq2$, observe that $K^{\frac{1}{n-1}}=\left\{z\in\C:dist\left(z,K^{\frac{1}{n}}\right)\leq\frac{1}{n-1}-\frac{1}{n}\right\}$. Proceeding as in the proof of the Fekete-Szeg\"o Theorem (Theorem 5.5.2 in \cite{Ransford1995}), we can show that
		\[
		\left(\frac{1}{n-1}-\frac{1}{n}\right)^{\frac{1}{n-1}}\delta_n\left(K^{\frac{1}{n}}\right)\leq c\left(K^{\frac{1}{n-1}}\right)^{\frac{n}{n-1}}.
		\]
		This implies $\delta_n\left(K^{\frac{1}{n}}\right)\leq\left(n(n-1)\right)^{\frac{1}{n-1}}c\left(K^{\frac{1}{n-1}}\right)^{\frac{n}{n-1}}$, and thus
		\begin{equation*}
			\frac{\delta_n\left(K^{\frac{1}{n}}\right)}{c\left(K^{\frac{1}{n}}\right)}\leq\left(n(n-1)\right)^{\frac{1}{n-1}}\frac{c\left(K^{\frac{1}{n-1}}\right)^{\frac{n-1}{n}}}{c\left(K^{\frac{1}{n}}\right)}=\left(n(n-1)\right)^{\frac{1}{n-1}}\frac{c\left(K^{\frac{1}{n-1}}\right)}{c\left(K^{\frac{1}{n}}\right)}\frac{1}{c\left(K^{\frac{1}{n-1}}\right)^{\frac{1}{n}}}.
		\end{equation*}
		We shall now apply Lemma \ref{dilation} twice: for $\varepsilon_1=\frac{1}{n-1}$ and $\varepsilon_2=\frac{1}{n}$ we get
		\begin{equation*}
			\frac{c\left(K^{\frac{1}{n-1}}\right)}{c\left(K^{\frac{1}{n}}\right)}\leq2\frac{n}{n-1}
		\end{equation*}
		and, for $\varepsilon_1=1$ and $\varepsilon_2=\frac{1}{n-1}$, we get $c\left(K^1\right)\leq2(n-1)c\left(K^{\frac{1}{n-1}}\right)$, hence
		\begin{equation*}
			\frac{1}{c\left(K^{\frac{1}{n-1}}\right)^{\frac{1}{n}}}\leq\left(2\frac{n-1}{c\left(K^1\right)}\right)^{\frac{1}{n}}.
		\end{equation*}
		Combining the previous three inequalities, we obtain
		\[
		\frac{\delta_n\left(K^{\frac{1}{n}}\right)}{c\left(K^{\frac{1}{n}}\right)}\leq2\left(n(n-1)\right)^{\frac{1}{n-1}}\frac{n}{n-1}\left(2\frac{n-1}{c\left(K^1\right)}\right)^{\frac{1}{n}},
		\]
		and the result follows by taking the $\limsup$.
	\end{proof}
	
	\begin{proof}[Proof of Theorem E]
		As usual, by Mergelyan's Theorem, we can assume that $\vp$ is a polynomial. For $n\geq 1$,  let us simply write $K_n=K^{\frac{1}{n}}$. Let $n_0\in \N$ be fixed such that, for any $n\geq n_0$, the compact set $K_n$ has connected complement and is contained in $U\cap \C\setminus \{0\}$. We also fix a path $\Gamma$ such that $\Gamma\subset U\setminus K_{n_0}$. We assume that $\Gamma$ has winding number equal to $1$ around any point of $K_{n_0}$, equal to $0$ around any point of $\C\setminus U$ and is such that $\min_{z\in \Gamma}|z|\geq m_{K_{n_0}}/2$ where $m_{K_{n_0}}:=\min_{z\in K_{n_0}}|z|$.
		
		Let $n\geq n_0$, then we set
		\[
		G_n=\max_{z\in \Gamma}e^{-g_{\overline{\C}\setminus K_n}(z,\infty)}\quad \text{and}\quad \theta_n=\left(\frac{\delta_n(K_n)}{c(K_n)}\right)^{\max_{z\in \Gamma}d_{\overline{\C}\setminus K_n}(z,\infty)}.
		\]
		Since $\max_{z\in \Gamma}d_{\overline{\C}\setminus K_n}(z,\infty)\leq\max_{z\in \Gamma}d_{\overline{\C}\setminus K_{n_0}}(z,\infty)$ (Corollary 1.3.7 in \cite{Ransford1995}), we have, by Lemma \ref{conv},
		\[
		\theta:=\sup_{n}\theta_n < \infty.
		\]
		Moreover, as explained in the second part of the proof of Theorem 6.3.1 in \cite{Ransford1995}, one has $G_n \to 0$. Hence, upon replacing $n_0$ by a larger integer, we may and shall assume that $G_n < 1/\theta$ for any $n\geq n_0$.
		
		We now apply Theorem \ref{thm-BW-incomplete} with $K=K_n$ and $L=\emptyset$. For every $\tau >1$ and every $n\geq n_0$, there exists a polynomial $P_n$ of the form $\sum_{k= n/\tau}^{n} a_k z^k$ such that
		\begin{eqnarray}\label{main_ineq}
			\Vert \vp - P_n\Vert _{K_n} & \leq & C_n\Vert \vp \Vert _{\overline{U}}\left(\frac{M_{K_{n_0}}}{m_{K_{n_0}}/2}\right)^{\frac{n}{\tau}}\left(G_n\theta_{n-\lfloor \frac{n}{\tau}\rfloor}\right)^{n-\lfloor \frac{n}{\tau}\rfloor}\nonumber\\
			 &  \leq & C\Vert \vp \Vert _{\overline{U}}\left(\left(\frac{M_{K_{n_0}}}{m_{K_{n_0}}/2}\right)^{\frac{1}{\tau}}\left(\theta G_n\right)^{\frac{\tau-1}{\tau}}\right)^n,
		\end{eqnarray}
		where $M_{K_{n_0}}=\max_{z\in K_{n_0}}|z|$, $C_n=\frac{l(\Gamma)}{2\pi\text{dist}(\Gamma,K_n)}$ and $C:=C_{n_0}$. 
		We now set
		\[
		\tau_n:=\frac{\log(\theta G_n)}{\log(\theta G_n)+\sqrt{-\log(\theta G_n)}}\left(1-\frac{\log\left(\frac{M_{K_{n_0}}}{m_{K_{n_0}}/2}\right)}{\log(\theta G_n)}\right)+\frac{1}{n}
		\]
		and
		\[
		\eta_n:=C^{\frac{1}{n}}\left(\frac{M_{K_{n_0}}}{m_{K_{n_0}}/2}\right)^{\frac{1}{\tau_n}}(\theta G_n)^{\frac{\tau_n-1}{\tau_n}}.
		\]
		We easily check that $\tau_n \to 1$. Moreover, using that $G_n < 1/\theta$, a straightforward computation shows that the inequality
		\[
		\tau_n>\frac{\log(\theta G_n)}{\log(\theta G_n)+\sqrt{-\log(\theta G_n)}}\left(1-\frac{\log\left(\frac{M_{K_{n_0}}}{m_{K_{n_0}}/2}\right)}{\log(\theta G_n)}\right)
		\]
		is equivalent to
		\[
		\left(\frac{M_{K_{n_0}}}{m_{K_{n_0}}/2}\right)^{\frac{1}{\tau_n}}(\theta G_n)^{\frac{\tau_n-1}{\tau_n}}<e^{-\sqrt{-\log(\theta G_n)}},
		\]
		and so
		\[
		\eta_n<C^{\frac{1}{n}}e^{-\sqrt{-\log(\theta G_n)}}
		\]
		Therefore $\eta_n \to 0$
		which, after taking into account \eqref{main_ineq} completes the proof.
	\end{proof}
	\begin{remark}
		We note that, by Theorem \hyperref[thme]{E}, the rate of approximation by strongly incomplete polynomials on polar compact sets is (strictly) faster than geometric. This is in accordance with the Bernstein-Walsh Theorem for polar compact sets  (see (6.8) in Theorem 6.3.1 in \cite{Ransford1995}).
	\end{remark}
	
	\subsection{Open problems}
	
	The previous results raise several natural problems. We shall mention two of them. Firstly, according to Lemma \ref{dilation}, we can ask whether the following improvement holds.
	\begin{conjecture}For any non-empty compact set $K$,
		\[
		\lim_{n\to\infty}\frac{\delta_n\left(K^{\frac{1}{n}}\right)}{C\left(K^{\frac{1}{n}}\right)}=1.
		\]
	\end{conjecture}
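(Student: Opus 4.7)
The plan is to reduce the conjecture to a regularity statement about the capacity function $\varepsilon \mapsto c(K^\varepsilon)$ at $\varepsilon = 0$, and then to attempt to prove this regularity by sharpening Lemma~\ref{dilation}.

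First I would dispose of the trivial non-polar case. When $c(K) > 0$, outer regularity of capacity yields $c(K^{1/n}) \to c(K)$; combining the monotonicity of $\delta_n$ in the set with the Fekete-Szegő theorem applied to each fixed $K^\varepsilon$ gives $\limsup_n \delta_n(K^{1/n}) \leq c(K^\varepsilon)$ for every $\varepsilon > 0$, and letting $\varepsilon \to 0$ forces $\delta_n(K^{1/n}) \to c(K)$. The ratio then tends to $1$.

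The substantive case is $c(K) = 0$. Tracing the proof of Lemma~\ref{conv} one obtains the sharper estimate
\[
\frac{\delta_n(K^{1/n})}{c(K^{1/n})} \leq \bigl(n(n-1)\bigr)^{1/(n-1)} \cdot \frac{c(K^{1/(n-1)})}{c(K^{1/n})} \cdot c(K^{1/(n-1)})^{1/(n-1)}.
\]
The first factor tends trivially to $1$. For the third, the inclusions $\overline{D(z_0, 1/(n-1))} \subset K^{1/(n-1)} \subset \overline{D(z_0, \operatorname{diam}(K)+1)}$ (for any $z_0 \in K$) give $1/(n-1) \leq c(K^{1/(n-1)}) \leq \operatorname{diam}(K)+1$, so this factor also tends to $1$. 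The conjecture therefore reduces to the capacity-regularity assertion
\[
\lim_{n \to \infty} \frac{c(K^{1/(n-1)})}{c(K^{1/n})} = 1,
\]
or more generally to the claim that $c(K^{\varepsilon_1})/c(K^{\varepsilon_2}) \to 1$ whenever $\varepsilon_1, \varepsilon_2 \to 0^+$ with $\varepsilon_1/\varepsilon_2 \to 1$.

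My proposed attack on this regularity statement is to sharpen Lemma~\ref{dilation}, replacing its constant $2$ by $1+o(1)$ as $\varepsilon_1/\varepsilon_2 \to 1$. Inspection of that proof shows the factor $2$ enters \emph{solely} through the crude bound $\log\!\bigl(1 + (1 - \tfrac{\varepsilon_2}{\varepsilon_1+\varepsilon})\tfrac{|z_k - z_j|}{|z' - w'|}\bigr) \leq \log 2$ on the off-diagonal cross-terms. Since the prefactor $1 - \varepsilon_2/(\varepsilon_1+\varepsilon)$ itself vanishes as $\varepsilon_1/\varepsilon_2 \to 1$, replacing this with $\log(1+x) \leq x$ should yield a correction of order $\varepsilon_1/\varepsilon_2 - 1$ rather than a constant loss, provided that the resulting double integrals $\int\!\int \tfrac{|z_k - z_j|}{|z' - w'|}\, d\nu_k(z')\, d\nu_j(w')$ can be summed over all pairs $(j,k)$ without blow-up. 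The main obstacle is precisely this last step: the covering of $K^{\varepsilon_1}$ by discs of radius $\varepsilon_1 + \varepsilon$ is intrinsically wasteful because neighbouring discs overlap, so any argument processing the discs pairwise risks an error growing with the number of discs, which diverges as $\varepsilon \to 0$. To sidestep this, a cleaner route might exploit the identity $K^{\varepsilon_1} = (K^{\varepsilon_2})^{\varepsilon_1 - \varepsilon_2}$ together with an asymptotic of the form $c(E^\delta) = c(E) + \delta + o(\delta)$ as $\delta \to 0$, obtained from the Green function of $\overline{\C} \setminus E$; if such an expansion can be established with an error term uniform over the relevant family of sets $E = K^{\varepsilon_2}$, the conjecture would follow immediately.
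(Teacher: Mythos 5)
This statement is posed in the paper as an open conjecture: the authors prove only $\limsup_n \delta_n(K^{1/n})/c(K^{1/n})\le 2$ (Lemma \ref{conv}) and explicitly leave the limit $=1$ unproven, so there is no proof of record to compare against. Your reduction is nonetheless worth assessing on its own terms. The non-polar case is handled correctly, and your rearrangement of the inequality from the proof of Lemma \ref{conv}, together with the two-sided bound $1/(n-1)\le c(K^{1/(n-1)})\le \operatorname{diam}(K)+2$ and the trivial lower bound $\delta_n\ge c$, does correctly reduce the polar case to the claim $c(K^{1/(n-1)})/c(K^{1/n})\to 1$. This is an accurate diagnosis: that ratio is exactly where the factor $2$ of Lemma \ref{dilation} lives, so you have reformulated the conjecture cleanly rather than circumvented it.

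The gap is that neither of your two proposed attacks on this regularity claim is carried out, and both run into concrete obstructions. (i) Replacing $\log 2$ by $\log(1+x)\le x$ in the cross-terms does not by itself yield a gain of order $\varepsilon_1/\varepsilon_2-1$: the quantity $x=\bigl(1-\tfrac{\varepsilon_2}{\varepsilon_1+\varepsilon}\bigr)\tfrac{|z_k-z_j|}{|z'-w'|}$ is only known to satisfy $x\le 1$, and it genuinely is of order $1$ when $z',w'$ lie near the gap between two adjacent contracted discs, since the lower bound $|z'-w'|\ge\bigl(1-\tfrac{\varepsilon_2}{\varepsilon_1+\varepsilon}\bigr)|z_k-z_j|$ established in the proof of Lemma \ref{dilation} is attained up to constants there. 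Summing $x\,d\nu_k\,d\nu_j$ over all pairs then gives a bound of order $\sum_{k<j}\nu_k(C_k')\nu_j(C_j')\le 1/2$, i.e.\ no improvement over the constant, unless one can show the measures $\nu_k$ put asymptotically negligible mass near those gaps -- which is the actual content of the conjecture and is not addressed. (ii) The alternative expansion $c(E^{\delta})=c(E)+\delta+o(\delta)$, uniform over $E=K^{\varepsilon_2}$, is false: for $K=\{0,R\}$ one has $c(K^{\varepsilon})\sim\sqrt{R\varepsilon}$, so $c(E^{\delta})-c(E)\sim\tfrac{\delta}{2}\sqrt{R/\varepsilon_2}\gg\delta$ as $\varepsilon_2\to 0$. (The multiplicative ratio still tends to $1$ in that example, which is consistent with the conjecture, but it shows the additive expansion cannot be the mechanism.) In short, what you have is a correct and useful reduction plus a research plan; the essential step -- a multiplicative continuity estimate for $\varepsilon\mapsto c(K^{\varepsilon})$ near $0$ that is uniform as the capacities themselves tend to $0$ -- is missing, and it is precisely the open content of the conjecture.
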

	We recall that, by the Fekete-Szeg\"o Theorem, this holds if $K$ is not polar.
	
	Furthermore, to us, the problem of characterizing the compact sets that support approximation by strongly incomplete polynomials, is quite interesting.  In view of Theorem \hyperref[thme]{E}, we make the following conjecture.
	
	\begin{conjecture}\label{conj-polar}Let $K\subset \C\setminus \{0\}$ be a compact set. Then any function continuous on $K$ (or equivalently, any polynomial) can be uniformly approximated on $K$ by polynomials of the form $\sum_{k= n/\tau_n}^{n}a_kz^k$ with $\tau_n \to 1$, if and only if $K$ is polar.
	\end{conjecture}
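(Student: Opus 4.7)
The forward implication is given by Theorem \hyperref[thme]{E}: any polar compact set $K\subset\C\setminus\{0\}$ supports strongly incomplete approximation. It remains to establish the converse: if $K\subset \C\setminus\{0\}$ is non-polar (and has connected complement), then some polynomial cannot be approximated uniformly on $K$ by polynomials $P_n=\sum_{k=\lfloor n/\tau_n\rfloor}^{n}a_kz^k$ with $\tau_n\to 1$. I take $\vp\equiv 1$ as the candidate non-approximable function and argue by contradiction: suppose such polynomials $P_n=z^{\nu_n}Q_n$ exist, with $\nu_n=\lfloor n/\tau_n\rfloor$ and $\deg Q_n\leq m_n:=n-\nu_n=o(n)$, satisfying $\Vert 1-P_n\Vert_K\to 0$, so $\Vert P_n\Vert_K$ is bounded.

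The first key step is the asymptotic bound
\[
\limsup_{n\to\infty}|P_n(z)|^{1/n}\leq \frac{|z|}{c(K)},\qquad z\in\C.
\]
For fixed $R>M_K$, the maximum modulus principle applied to the polynomial $P_n/z^{\nu_n}$ on $\{|z|\leq R\}$ gives $|P_n(z)|\leq (|z|/R)^{\nu_n}\Vert P_n\Vert_{\{|w|=R\}}$ for $|z|\leq R$; the Bernstein-Walsh theorem applied to $P_n$ (of degree at most $n$ on the non-polar set $K$) gives $\Vert P_n\Vert_{\{|w|=R\}}\leq \Vert P_n\Vert_K\exp(n\,g_K^*(R))$, where $g_K^*(R):=\max_{|w|=R}g_{\overline{\C}\setminus K}(w,\infty)$. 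Combining, extracting $n$-th roots (using $\nu_n/n\to 1$ and $\Vert P_n\Vert_K^{1/n}\to 1$), and then letting $R\to\infty$ with the standard asymptotic $g_K^*(R)=\log R-\log c(K)+o(1)$, yields the bound. In particular $P_n\to 0$ pointwise on $\{|z|<c(K)\}$.

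This settles the case $m_K<c(K)$: any $z_0\in K\cap\{|z|<c(K)\}$ gives the contradiction $0=\lim P_n(z_0)=1$. It also handles the case where $K$ has non-empty interior, via the Pritsker-Varga disk criterion: $K$ then contains some closed disk $\overline{D(a,r)}\subset\C\setminus\{0\}$ with $r>0$, and since $\tau_n\to 1$ implies that for every fixed $\tau>1$ the polynomials $P_n$ are eventually $\tau$-incomplete, the cited consequence of Theorem 2.2 in \cite{PritskerVarga1998} (restated in the Proposition preceding Theorem \hyperref[thme]{E}) forces $r\leq a(\tau-1)/(\tau+1)$ for every $\tau>1$, hence $r=0$.

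The genuine obstacle is the remaining case: $K$ is non-polar, has empty interior, and satisfies $m_K\geq c(K)$ (e.g.\ a non-polar Cantor-type subset of a circle $\{|z|=r\}$ with $r>c(K)$). Here the disk $\{|z|<c(K)\}$ misses $K$, and the disk reduction is unavailable. What is needed is a sharper, weighted Bernstein-Walsh inequality exploiting \emph{both} the bound on $P_n$ on $K$ \emph{and} the vanishing of $P_n$ at $0$ to order $\nu_n\sim n$ -- naturally phrased in terms of the balayage of $\delta_0$ onto $K$, or of a signed Green function with poles at $0$ and $\infty$. Equivalently, the problem sits within the Saff-Totik weighted potential theory framework with external field $Q(z)=-\log|z|$: the polynomials $P_n$ are weighted polynomials with weight $|z|^{\alpha_n}$, $\alpha_n\to\infty$, and the essential support $S_w$ of the associated extremal measure is expected to collapse to a polar set in the limit, forcing $K\not\subset S_w$ whenever $K$ is non-polar. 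Making this heuristic quantitative, and transferring it to a Bernstein-Walsh-type lower bound on $\Vert 1-P_n\Vert_K$ that contradicts $\Vert 1-P_n\Vert_K\to 0$, is the step where I expect the main difficulty to lie.
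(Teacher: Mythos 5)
The statement you are attempting is labelled a \emph{conjecture} in the paper: the authors give no proof of it, and they explicitly remark that it is not even known whether strongly incomplete polynomials are dense in $C(K)$ when $K$ is an arc of a circle or a segment in $\C\setminus\{0\}$. So there is no paper proof to compare against; your proposal has to be judged as an attempt at an open problem, and by your own admission it does not close it.

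What you do prove is correct and worth recording. The forward implication does follow from Theorem \hyperref[thme]{E} (together with Lavrentiev/Mergelyan to pass from polynomials to all of $C(K)$, a polar compact set being totally disconnected with connected complement). Your estimate $\limsup_n|P_n(z)|^{1/n}\leq |z|/c(K)$ is a sound combination of the maximum principle for $Q_n=P_n/z^{\nu_n}$, Bernstein's lemma on the non-polar set $K$, and the Robin-constant asymptotics $g_{\overline{\C}\setminus K}(w,\infty)=\log|w|-\log c(K)+o(1)$; it correctly kills approximation of $\vp\equiv 1$ whenever $K$ meets $D(0,c(K))$. The non-empty-interior case via the Pritsker--Varga disc criterion reproduces an observation the paper itself makes (and uses in Proposition \ref{thm-non-FUTS-alpha-fast}). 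But the case you isolate at the end --- $K$ non-polar, with empty interior, contained in $\{|z|\geq c(K)\}$ --- is precisely the substance of the conjecture, and what you offer there is a programme, not an argument: the weighted Bernstein--Walsh or balayage inequality that would convert ``$P_n$ vanishes at $0$ to order $\sim n$'' into a lower bound for $\Vert 1-P_n\Vert_K$ is exactly the missing ingredient, and producing it (or a counterexample) is the open problem. So the proposal is honest, correct partial progress, but it is not a proof of the statement.
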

	
	As far as we know, it is not even known whether strongly incomplete polynomials are dense in $C(K)$ when $K$ is an arc of a circle or an interval in $\C\setminus \{0\}$. If this conjecture is true, then one can also wonder to which extent the rate of convergence of $\tau_n \to 1$ is related to $K$. In the opposite direction, is it possible to approximate \textit{anything} on any polar set by strongly incomplete polynomial with respect to any sequence $(\tau_n)_n$ with $\tau _n \to 1$?
	
	\section{Frequently universal Taylor series}\label{sec-FUTS}
	
	\subsection{Weighted densities and frequently universal Taylor series}\label{prelim-1-1}
	For a sequence $\alpha=(\alpha_k)_{k\geq 1}$ of non-negative real numbers such that $\sum_{k\geq 1}\alpha_k=+\infty$, we define the matrix $(\alpha_{n,k})_{n,k\geq 1}$ by 
	\[
	\alpha_{n,k}=\left\{
	\begin{array}{cl}\alpha_k/\sum_{j=1}^n\alpha_j& \text{for }1\leq k\leq n,\\
		0 & \text{otherwise}.
	\end{array}\right.
	\]
	Then the lower $\alpha$-density $\dlow_{\alpha}:\PP(\N)\to [0,1]$ and the upper $\alpha$-density $\dup_{\alpha}:\PP(\N)\to [0,1]$ are defined, for any $E\subset \N$, by
	\[
	\dlow_{\alpha}(E)=\liminf_{n \to \infty}\sum_{k\geq 1}\alpha_{n,k}\indicatrice{E}(k)\quad \text{and}\quad \dup_{\alpha}(E)=1-\dlow_{\alpha}(\N\setminus E)=\limsup_{n \to \infty}\sum_{k=1}^{+\infty}\alpha_{n,k}\indicatrice{E}(k).
	\]
	If $\alpha_k=1$ for any $k\geq 1$, the density $\dlow_{\alpha}$ is the natural density (that is simply denoted by $\dlow$). For more information on those densities, we refer to \cite{FreedmanSember1981}. If it is assumed in addition that the sequence $(\alpha_n/\sum_{j=1}^n\alpha_j)_{n\geq 1}$ converges to $0$, then for any set $E\subset \N$ enumerated as an increasing sequence $(n_k)_{k\geq 1}$, we have
	\[
	\dlow_{\alpha}(E)=\liminf_{k \to \infty}\frac{\sum_{j=1}^{k}\alpha_{n_j}}{\sum_{j=1}^{n_k}\alpha_j},
	\]
	see Lemma 2.7 in \cite{ErnstMouze2019}. From now on, we say that a sequence $\alpha=(\alpha_k)_{k\geq 1}$ of non-negative numbers is \textit{admissible} if it satisfies the conditions $\alpha_n/\sum_{j=1}^n\alpha_j\to 0$ as $n\to \infty$ and $\sum_k\alpha_k =+\infty$.
	
	\medskip
	
	Lemma 2.8 in \cite{ErnstMouze2019} shows that densities can be scaled in function of the growth of the sequences $\alpha$. More precisely, for two admissible sequences $\alpha$ and $\beta$, let us write $\alpha \lesssim \beta$ if there exists $k_0\in \N$ such that $(\alpha_k/\beta_k)_{k\geq k_0}$ is non-increasing. Then  $\alpha \lesssim \beta$ implies that for any $E\subset \N$,
	\[
	\dlow_{\beta}(E)\leq \dlow_{\alpha}(E)\leq \dup_{\alpha}(E) \leq \dup_{\beta}(E).
	\]
	In particular, if $\alpha$ is non-increasing and $\beta$ non-decreasing, then $\dlow_{\beta}\leq \dlow \leq \dlow_{\alpha}$.
	
	\medskip
	
	
	\begin{example}\label{examples-densities}{\rm The interested reader can find several examples of admissible sequences $\alpha$, inducing weighted densities in \cite{ErnstMouze2019,ErnstMouze2021}, see also \cite{CharpentierErnstMestiriMouze2022}. For our purpose, we shall consider the following ones:
			\begin{enumerate}
				\item The classical natural density $\dlow$ is equal to $\dlow_{\alpha}$ where $\alpha$ is any constant sequence $(a,a,a,\ldots)$ with $a>0$.
				\item The logarithmic density $\dlow _{\log}$ is $\dlow_{\alpha}$ for the sequence $\alpha = (1/k)_{k\geq 1}$.
				\suspend{enumerate}
				The following sequences are also admissible sequences that appear in the literature:
				\resume{enumerate}
				\item $\PP_r=(k^r)_{k\geq 1}$ for $r> -1$.
				\item $\EE_{\varepsilon}=(\exp(k^{\varepsilon}))_{k\geq 1}$ for $\varepsilon\in [0,1]$.
				\item $\DD_s=(\exp(k/\log_{(s)}(k)))_{k\geq k_0}$ for $s\in \N\cup \{\infty\}$ and $k_0$ large enough. Here we denote $\log_{(s)}=\log \circ \cdots \circ \log$, $\log$ appearing $s$ times, and use the conventions $\log_{(0)}(x)=x$ and $\log_{(\infty)}(x)=1$ for any $x>0$.
				\item $\LL_l=(e^{\log(k)\log_{(l)}(k)})_{k\geq k_0}$ for $l\geq 1$ and $k_0$ large enough.
		\end{enumerate}}
	\end{example}
	On can check that for any $0<\varepsilon < 1$, any $s\in \N$, any $r> 0$, any $t\in (-1,0)$ and any $l\geq 1$, we have
	\begin{equation}\label{scale-density}
		\dlowg{\EE_1} \leq \dlowg{\DD_s} \leq \dlowg{\EE_{\varepsilon}} \leq \dlow_{\LL_l}\leq\dlowg{\PP_r} \leq \dlow \leq \dlowg{\PP_t} \leq \dlow_{\log}.
	\end{equation}
	
	Further, given two admissible sequences $\alpha$ and $\beta$, the densities $\dlow_{\alpha}$ and $\dlow_{\beta}$ are said to be equivalent if, for any subset $E\subset \N$, one has
	\[
	\dlow_{\alpha}(E)>0\text{ if and only if }\dlow_{\beta}(E)>0.
	\]
	For example:
	\begin{prop}[Lemma 2.10 in \cite{ErnstMouze2019}]\label{prop-equiv-Pr-natural}For any $r>-1$, the densities $\dlow_{\PP_r}$ and $\dlow$ or equivalent.
	\end{prop}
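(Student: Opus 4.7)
The plan is to prove two-sided quantitative comparisons between $\dlow_{\PP_r}(E)$ and $\dlow(E)$ for every $E\subset \N$, from which the equivalence follows at once. Three ingredients are enough: (i) the asymptotic $\sum_{k=1}^{n}k^r\sim n^{r+1}/(r+1)$, valid since $r>-1$; (ii) a one-line rearrangement inequality bounding $\sum_{k\in E\cap\{1,\ldots,n\}}k^r$ in terms of $A_n:=|E\cap\{1,\ldots,n\}|$; (iii) the scale comparison of Lemma 2.8 of \cite{ErnstMouze2019}, already invoked in the excerpt to set up \eqref{scale-density}.

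Suppose first that $r\geq 0$, so that $k\mapsto k^r$ is non-decreasing. Among all subsets $S\subset\{1,\ldots,n\}$ of a given cardinality $A_n$, the sum $\sum_{k\in S}k^r$ is minimised by $S=\{1,\ldots,A_n\}$. Combined with (i), this yields
\[
\sum_{k\in E\cap\{1,\ldots,n\}}k^r\;\geq\;\sum_{k=1}^{A_n}k^r\;\sim\;\frac{A_n^{r+1}}{r+1}.
\]
Dividing by $\sum_{k=1}^{n}k^r\sim n^{r+1}/(r+1)$ and passing to the $\liminf$ (replacing $\dlow(E)$ by $\dlow(E)-\varepsilon$ on the set of large $n$ where $A_n\geq(\dlow(E)-\varepsilon)n$, then letting $\varepsilon\to 0$), one gets $\dlow_{\PP_r}(E)\geq (\dlow(E))^{r+1}$. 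The reverse inequality $\dlow_{\PP_r}(E)\leq \dlow(E)$ is exactly the scale comparison already recorded in \eqref{scale-density}, applied to the non-decreasing sequence $\PP_r$. Since $r+1>0$, both densities vanish together, proving the equivalence in this range.

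For $-1<r<0$, the argument is symmetric. Now $k\mapsto k^r$ is non-increasing, so the same rearrangement identifies $\{1,\ldots,A_n\}$ as the configuration that \emph{maximises} $\sum_{k\in S}k^r$, giving
\[
\sum_{k\in E\cap\{1,\ldots,n\}}k^r\;\leq\;\sum_{k=1}^{A_n}k^r\;\sim\;\frac{A_n^{r+1}}{r+1},
\]
hence $\dlow_{\PP_r}(E)\leq (\dlow(E))^{r+1}$. The inequality $\dlow(E)\leq \dlow_{\PP_r}(E)$ is again provided by Lemma 2.8 of \cite{ErnstMouze2019}, now because $\PP_r$ is non-increasing. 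Since $r+1\in(0,1)$, the function $t\mapsto t^{r+1}$ sends $0$ to $0$ and positive numbers to positive numbers, so the two densities remain simultaneously zero or positive.

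There is no real obstacle; the only point requiring a bit of care is the routine $\varepsilon$-bookkeeping used to pass the bound $A_n\geq (\dlow(E)-\varepsilon)n$ through the $\liminf$. Everything else reduces to the standard asymptotic $\sum_{k=1}^n k^r\sim n^{r+1}/(r+1)$ and to a rearrangement principle in one line, together with the already-quoted Lemma 2.8 of \cite{ErnstMouze2019}.
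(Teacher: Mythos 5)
Your proof is correct. Note first that the paper does not prove this proposition at all: it is quoted as Lemma 2.10 of the cited reference, so your self-contained argument is genuinely supplementary rather than a variant of something in the text. Your route --- the rearrangement observation that among subsets of $\{1,\dots,n\}$ of cardinality $A_n$ the sum $\sum_{k\in S}k^r$ is extremised by $S=\{1,\dots,A_n\}$, combined with $\sum_{k=1}^m k^r\sim m^{r+1}/(r+1)$ and the scale comparison $\dlow_{\PP_r}\leq\dlow$ (resp.\ $\dlow\leq\dlow_{\PP_r}$) from the monotonicity of $\PP_r$ --- is sound in both regimes, and it buys more than the bare equivalence: the two-sided quantitative bounds $(\dlow(E))^{r+1}\leq\dlow_{\PP_r}(E)\leq\dlow(E)$ for $r\geq 0$ and $\dlow(E)\leq\dlow_{\PP_r}(E)\leq(\dlow(E))^{r+1}$ for $-1<r<0$. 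This is close in spirit to, but more direct than, the approach the paper itself sketches later (Remark after Theorem \ref{thm-log-density-FUTS}), which goes through the enumeration formula $\dlow_\alpha(E)=\liminf_k \vp_\alpha(k)/\vp_\alpha(n_k)$ and the asymptotics of $\vp_{\PP_r}$. Two small points of hygiene, both routine: in the upper bound for $-1<r<0$ you should not invoke the asymptotic $\sum_{k=1}^{A_n}k^r\sim A_n^{r+1}/(r+1)$ when $A_n$ stays bounded along the extremal subsequence, but there the numerator is bounded while the denominator diverges (admissibility gives $\sum_k k^r=+\infty$), so the ratio still tends to $0$; and the lower bound $\dlow_{\PP_r}(E)\geq(\dlow(E))^{r+1}$ for $r\geq 0$ is vacuous when $\dlow(E)=0$, which is exactly when you do not need it.
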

	In fact, under some mild condition, the densities that are equivalent to the natural one can be characterised. Given an admissible sequence $\alpha$, let us denote by $\vp_{\alpha}$ the function defined by $\vp_{\alpha}(x)=\sum_{k=1}^x\alpha_k$, $x\geq 1$. We shall say that the function $\vp_{\alpha}$ satisfies the condition $\Delta_2$ if there exists $K>0$ such that $\vp_{\alpha}(2x)\leq K\vp_{\alpha}(x)$, for some $K>0$ and any $x$ large enough. Then, one has:
	\begin{prop}[Theorem 4.2 (1) in \cite{CharpentierErnstMestiriMouze2022}]\label{prop-equi-density-delta2}Let $\alpha$ be a non-decreasing admissible sequence. Assume that $(\frac{\vp_{\alpha}(n)}{\vp_{\alpha}(Cn)})_n$ is eventually non-increasing, for some $C>1$. Then $\dlow_{\alpha}$ and $\dlow$ are equivalent if and only if $\vp_{\alpha}$  satisfies the condition $\Delta_2$.
	\end{prop}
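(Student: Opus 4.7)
The plan is to first reduce the claimed equivalence to a single implication, treat its two directions separately, and then comment on the role of the monotonicity hypothesis, which is the main technical subtlety.

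\textbf{Reduction.} Since $\alpha$ is non-decreasing, the sequence $(1/\alpha_k)_k$ is eventually non-increasing, so the scaling principle recalled just before Example \ref{examples-densities} (i.e.\ Lemma 2.8 of \cite{ErnstMouze2019}) applied to $(1,1,\ldots)\lesssim \alpha$ gives $\dlow_{\alpha}(E)\leq \dlow(E)$ for every $E\subset \N$, with no extra hypothesis. Hence the implication ``$\dlow_\alpha(E)>0\Rightarrow \dlow(E)>0$'' is automatic. The proposition thus reduces to characterizing when the reverse implication ``$\dlow(E)>0\Rightarrow \dlow_\alpha(E)>0$'' holds for every $E$, and I will show this amounts exactly to $\vp_\alpha\in\Delta_2$.

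\textbf{Sufficiency of $\Delta_2$.} Iterating $\vp_\alpha(2x)\leq K\vp_\alpha(x)$ yields $\vp_\alpha(n)\leq K^j\vp_\alpha(n/2^j)$ for every $j\in\N$. Given $E$ with $\dlow(E)=\delta>0$, I would fix $j_0$ so that $2^{j_0}\geq 4/\delta$; then for $n$ large enough
\[
|E\cap (n/2^{j_0},n]| \geq (\delta/2)n - n/2^{j_0} \geq (\delta/4)n.
\]
Using monotonicity of $\alpha$ twice, one lower-bounds the numerator by $\alpha_{\lfloor n/2^{j_0}\rfloor}(\delta/4)n$ and upper-bounds $\vp_\alpha(n/2^{j_0})\leq (n/2^{j_0})\alpha_{\lfloor n/2^{j_0}\rfloor}$. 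Combining with the iterated $\Delta_2$ inequality and simplifying gives the uniform positive bound $\dlow_\alpha(E)\geq \delta\, 2^{j_0}/(4K^{j_0})$.

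\textbf{Necessity of $\Delta_2$.} I would argue by contrapositive, constructing a witness $E$ with $\dlow(E)>0$ and $\dlow_\alpha(E)=0$. Since $\vp_\alpha$ is non-decreasing, a routine iteration shows that $\Delta_2$ is insensitive to the choice of constant $>1$, so failure of $\Delta_2$ at $2$ forces failure at the constant $C$ from the hypothesis. Combined with the eventual monotonicity of $n\mapsto \vp_\alpha(n)/\vp_\alpha(Cn)$, this forces $\vp_\alpha(n)/\vp_\alpha(Cn)\to 0$ along the \emph{whole} sequence. I would then extract integers $n_1<n_2<\cdots$ with $n_{j+1}\geq 2Cn_j$ and $\vp_\alpha(n_j)/\vp_\alpha(Cn_j)\to 0$, and set $E=\N\setminus \bigcup_j (n_j,Cn_j]$. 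A direct geometric-series count using $n_{j+1}\geq 2Cn_j$ yields $\dlow(E)\geq 1/(2C-1)>0$, while specializing to $n=Cn_j$ gives
\[
\frac{\sum_{k\in E,\, k\leq Cn_j}\alpha_k}{\vp_\alpha(Cn_j)} \leq \frac{\vp_\alpha(n_j)}{\vp_\alpha(Cn_j)} \longrightarrow 0,
\]
so $\dlow_\alpha(E)=0$.

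\textbf{Main obstacle.} The delicate point is the first step of the necessity argument: upgrading the failure of $\Delta_2$, which a priori only produces a subsequence along which $\vp_\alpha(n)/\vp_\alpha(Cn)$ becomes small, to genuine convergence of $\vp_\alpha(n)/\vp_\alpha(Cn)$ to $0$ along all large integers. This is exactly what the eventual monotonicity hypothesis buys, and it is what makes the sequence $(n_j)$ above available with prescribed rapid growth. Without this upgrade, the ``bad'' scales could cluster and the excised intervals $(n_j,Cn_j]$ could collectively carry positive natural density, ruining $\dlow(E)>0$.
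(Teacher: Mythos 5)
Your proof is correct and follows essentially the same route as the paper: the easy implication comes from the monotonicity of $\alpha$, and for necessity you use the same mechanism of upgrading the failure of $\Delta_2$ to $\vp_{\alpha}(n)/\vp_{\alpha}(Cn)\to 0$ via the eventual monotonicity of the quotient and then excising geometrically sparse intervals $(n_j,Cn_j]$ --- the paper packages this as $\dlow_{\alpha}(E)\leq 1-\dup_{\alpha}(F)$ for two interleaved unions of intervals $[a^{2n},Ca^{2n}]$ and $[a^{2n+1},Ca^{2n+1}]$, which is the mirror image of your complement-based witness set. The only substantive difference is that you spell out the sufficiency of $\Delta_2$ in full (correctly, via the iterated inequality $\vp_{\alpha}(n)\leq K^{j_0}\vp_{\alpha}(n/2^{j_0})$ and the two-sided bounds coming from the monotonicity of $\alpha$), whereas the paper simply cites the proof of Theorem 4.2 (1) in \cite{CharpentierErnstMestiriMouze2022} for that direction.
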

	
	\begin{proof}Since $\alpha$ is non-decreasing, it is clear that, for any $E\subset \N$, $\dlow_{\alpha}(E)>0$ implies $\dlow(E)>0$. The fact that if $\vp_{\alpha}$ satisfies the condition $\Delta_2$, then $\dlow(E)>0$ implies $\dlow_{\alpha}(E)>0$ for any $E\subset \N$, is contained in the proof of Theorem 4.2 (1) in \cite{CharpentierErnstMestiriMouze2022}. Conversely, using that  $(\frac{\vp_{\alpha}(n)}{\vp_{\alpha}(Cn)})_n$ is eventually non-increasing, for any $C>1$, it is not difficult to see that, if $\vp_{\alpha}$ does not satisfy the condition $\Delta_2$, then $\frac{\vp_{\alpha}(n)}{\vp_{\alpha}(Cn)}\to 0$, for some $C>1$. Then, let $a>C$ and let us consider the two sets
		\[
		E=\N\cap \bigcup_{n\in \N} [a^{2n},C a^{2n}]\quad \text{and}\quad F=\N\cap \bigcup_{n\in \N}[a^{2n+1},C a^{2n+1}].
		\]
		Using $a>C$, one directly checks that $E\cap F=\emptyset$. Therefore,
		\begin{eqnarray*}
			\dlow_{\alpha}(E) & \leq & 1-\dup_{\alpha}(F)\\
			& \leq & 1 - \limsup_n \left(1 - \frac{\vp_{\alpha}(a^{2n+1})}{\vp_{\alpha}(Ca^{2n+1})}\right)\\
			& = & 0.
		\end{eqnarray*}
		However, $\dlow(E)\geq \liminf_n \frac{(C-1)a^{2n}}{a^{2n+2}}=\frac{C-1}{a^2}>0$, which concludes the proof.
	\end{proof}
	
	Note that, for any admissible weight $\alpha$ within the classes $\EE_{\varepsilon}$, $\DD_s$, $\LL_l$ and $\PP_r$, $r> 1$, of Example \ref{examples-densities}, the ratio $(\frac{\vp_{\alpha}(n)}{\vp_{\alpha}(Cn)})_n$ is eventually non-increasing, for some $C>1$ (and actually for any $C>1$). Moreover, among these examples, the only non-decreasing sequences $\alpha$ for which $\vp_{\alpha}$ satisfies the condition $\Delta_2$ are the sequences $\PP_r$, $r\geq 0$.
	
	\medskip
	
	To finish this paragraph, let us recall the definition of the main objects of interest to us, and the motivating problems.
	\begin{definition}[Frequently universal Taylor series]Let $\alpha$ be an admissible sequence and let $K\subset \C\setminus \D$ be a compact set. A function $f=\sum_ka_kz^k$ in $H(\D)$ is a $K$-frequently universal Taylor series with respect to $\alpha$ if, for any non-empty open set $U$ of $A(K)$,
		\[
		\dlow_{\alpha}\left(\{n\in \N:\,S_n^{(K)}(f) \in U\}\right)>0.
		\]
		The set of $K$-frequently universal Taylor series with respect to $\alpha$ is denoted by $FU_{\alpha}(\D,K)$.
	\end{definition}
	When $\alpha$ is the constant sequence, we shall simply write $FU(\D,K)$, as we did in Definition \ref{definition-intro-K-FUTS}, and refer to its elements as $K$-frequently universal Taylor series. We recall that
	\begin{equation}\label{eq-result-no-FUTS-Nesto}
		\bigcap_{K\in \MM}FU(\D,K)=\emptyset,
	\end{equation}
	see \cite{MouzeMunnier2016,MouzeMunnier2017,Charpentier2019}. In view of Proposition \ref{prop-equiv-Pr-natural}, this implies, more generally,
	\[
	\bigcap_{K\in \MM}FU_{\alpha}(\D,K)=\emptyset
	\]
	whenever $\PP_r  \lesssim\alpha$ for some $r>-1$.
	
	\medskip
	
	Thus, we are led to the following reformulations of Questions A and B of the introduction:
	\begin{enumerate}
		\item Is the set $\cap_{K\in \MM}FU_{\log}(\D,K)$ empty or not, where $FU_{\log}=FU_{\alpha}$ with $\alpha =(1/k)_k$?
		\item Given a fixed compact set $K\subset \C\setminus \D$, does there exist an admissible sequence $\alpha$ such that $FU_{\alpha}(\D,K)\neq \emptyset$?
	\end{enumerate}
	
	In \cite{MouzeMunnier2016}, Question (1) corresponds to Problem (2), while Question (2) was posed for the natural density as Problem (3).
	
	\subsection{Main results - Proofs of Theorems \hyperref[thma]{A} and \hyperref[thmb]{B}}\label{subsec-mainresults}
	
	We shall first prove the existence of frequently universal Taylor series with respect to arbitrary compact subsets of $\C\setminus \overline{\D}$, with connected complement, that is Theorem \hyperref[thmb]{B}. As usual, we will denote by $A(\D)$ the disc algebra, that is, the space of all continuous functions on $\overline{\D}$ that are holomorphic in $\D$ endowed with the supremum norm.
	
	\begin{theorem}\label{FHC-Taylor-series}For any compact set  $K \subset \C\setminus \overline{\D}$, with connected complement, the set $FU(\D,K)\cap A(\D)$ is dense in $A(\D)$.
	\end{theorem}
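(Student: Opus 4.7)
The strategy is to construct, near any prescribed $f_0 \in A(\D)$, a $K$-frequently universal Taylor series by adding a convergent series of carefully chosen incomplete polynomials. By density of polynomials in $A(\D)$, it suffices to treat the case where $f_0$ is a polynomial and to produce, for any $\varepsilon > 0$, a function $f \in FU(\D,K) \cap A(\D)$ with $\|f - f_0\|_{\overline{\D}} < \varepsilon$. Since $K \subset \C\setminus \overline{\D}$ is compact and disjoint from $\overline{\D}$, fix a bounded open neighborhood $U$ of $K$ with $\overline{U} \cap \overline{\D} = \emptyset$ and apply Corollary \ref{cor-incomplete-1} to the pair $(K, L := \overline{\D})$ to get constants $\tau \geq 1$, $\theta \in (0,1)$ and $N \in \N$. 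Fix also a countable basis $(U_n)_n$ of $A(K)$, each $U_n$ being a ball of radius $\varepsilon_n > 0$ centered at a polynomial $\varphi_n$, and partition $\N = \bigsqcup_n J_n$ into pairwise disjoint sets with bounded gaps $g_n$ (for example, $J_n := \{k 2^n + 2^{n-1} : k \geq 0\}$).

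Define $d_j := \lceil \tau^{2j} d_0 \rceil$ for $d_0$ large, and $v_j := \lceil d_j/\tau \rceil$, ensuring $v_j > d_{j-1}$ so that the coefficient supports $[v_j, d_j]$ are pairwise disjoint. For each $j \geq j_0$, where $j_0$ satisfies $d_{j_0} \geq \max(\deg f_0, N)$, let $n(j)$ denote the unique $n$ with $j \in J_n$, set inductively
\[
\psi^{(j)} := \varphi_{n(j)} - f_0 - \sum_{j_0 \leq i < j} P^{(i)},
\]
and invoke Corollary \ref{cor-incomplete-1} to obtain a polynomial $P^{(j)}$ of the form $\sum_{k = v_j}^{d_j} a_k z^k$ with
\[
\max\bigl(\|P^{(j)}\|_{\overline{\D}},\ \|P^{(j)} - \psi^{(j)}\|_K\bigr) \leq \|\psi^{(j)}\|_{\overline{U}}\, \theta^{d_j}.
\]
Put $f := f_0 + \sum_{j \geq j_0} P^{(j)}$. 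A direct telescoping using the definition of $\psi^{(j)}$ gives $\|S_{d_j}^{(K)}(f) - \varphi_{n(j)}\|_K \leq \|\psi^{(j)}\|_{\overline{U}} \theta^{d_j}$, which is less than $\varepsilon_{n(j)}$ for $d_j$ large, so $S_{d_j}^{(K)}(f) \in U_{n(j)}$.

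Because the supports $[v_j, d_j]$ are pairwise disjoint and $d_j \geq \deg f_0$, the partial sum $S_k^{(K)}(f)$ is constant on each ``plateau'' $[d_j, v_{j+1}-1]$, which therefore lies entirely in $U_{n(j)}$. With the geometric scaling $d_j = \tau^{2j} d_0$, the plateau at step $j$ has length of order $(\tau-1)\tau^{2j} d_0$, and an elementary count using the bounded-gap property of each $J_n$ yields
\[
\dlow\bigl(\{k \in \N : S_k^{(K)}(f) \in U_n\}\bigr) \geq (\tau-1)\tau^{-2(g_n+1)} > 0
\]
for every $n$, which is exactly the $K$-frequent universality condition.

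The crucial technical obstacle is to close the recursion on $\|\psi^{(j)}\|_{\overline{U}}$ and ensure $\sum_j \|P^{(j)}\|_{\overline{\D}} < \varepsilon$ (so that $f \in A(\D)$ with $\|f - f_0\|_{\overline{\D}} < \varepsilon$). Since $\psi^{(j)}$ is a polynomial of degree at most $d_{j-1}$ (for $j$ large), the Bernstein-Walsh inequality gives $\|\psi^{(j)}\|_{\overline{U}} \leq \|\psi^{(j)}\|_K \exp(d_{j-1} M_U)$ with $M_U := \sup_{z \in \overline{U}} g_{\C \setminus K}(z,\infty)$, while $\|\psi^{(j)}\|_K$ admits a parallel inductive bound from $\|P^{(i)}\|_K \leq \|\psi^{(i)}\|_K + \|\psi^{(i)}\|_{\overline{U}} \theta^{d_i}$. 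The recursion closes --- yielding super-geometric decay of $\|P^{(j)}\|_{\overline{\D}}$ --- as soon as $\tau^2 > M_U/|\log \theta|$, which can be arranged by taking $\tau$ sufficiently large (permitted by the flexibility noted after the proof of Theorem \ref{thm-BW-incomplete}) together with $U$ close enough to $K$ that $M_U$ is small. With $d_0$ chosen accordingly, each $\|P^{(j)}\|_{\overline{\D}}$ falls below $2^{-j}\varepsilon$, giving $\|f - f_0\|_{\overline{\D}} < \varepsilon$; when $K$ is polar, one enlarges it first to a non-polar compact set with connected complement, which only strengthens the approximation requirement.
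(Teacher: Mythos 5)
Your proof is correct in substance, but its combinatorial core is genuinely different from the paper's. The paper invokes Lemma \ref{lemma-FHC-sets} to produce disjoint sets $E_p\subset\N$ of positive lower density whose elements are multiplicatively separated across different $E_p$'s; the degrees of the building blocks are exactly the elements of $\bigcup_pE_p$, consecutive degrees inside the same $E_p$ receive the zero polynomial, and positive density is inherited directly from $\dlow(E_p)>0$. You instead take a fixed geometric sequence of degrees $d_j\approx\tau^{2j}d_0$ and harvest density from the plateaus $[d_j,v_{j+1})$, whose length is a fixed positive proportion of their right endpoint; combined with the bounded gaps of the dyadic schedule $(J_n)_n$ this indeed gives $\dlow\geq(\tau-1)\tau^{-2g_n-1}>0$. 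Your route avoids the combinatorial lemma entirely, at the price of a density constant that degrades with $n$, whereas the paper's $E_p$ give a uniform mechanism that also transfers to other weighted densities. The second divergence is the control of $\Vert\psi^{(j)}\Vert_{\overline{U}}$: the paper applies Bernstein's lemma relative to $\overline{\D}$, where $\sum_{i<j}P^{(i)}$ is already known to be small, with closing condition $G_U^{1/\tau}\theta<1$; you apply it relative to $K$, which forces the preliminary enlargement to a non-polar set and the condition $\tau^2>M_U/|\log\theta|$. Both closings rest on the same fact, namely that $|\log\theta|$ stays bounded below as $\tau\to\infty$, which is exactly the flexibility recorded after Theorem \ref{thm-BW-incomplete}. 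Two small repairs are needed: (i) your claim that $\deg\psi^{(j)}\leq d_{j-1}$ fails whenever $\deg\vp_{n(j)}>d_{j-1}$; with the dyadic schedule $n(j)=O(\log j)$ while $d_{j-1}$ grows like $\tau^{2j}$, so only finitely many $j$ are affected and can be discarded, but this should be said (or one should bound $\Vert\vp_{n(j)}\Vert_{\overline{U}}$ directly and apply Bernstein--Walsh only to $\sum_{i<j}P^{(i)}$, as the paper in effect does); (ii) the clause ``$U$ close enough to $K$ that $M_U$ is small'' is not always achievable, since $g_{\overline{\C}\setminus K}(\cdot,\infty)$ need not tend to $0$ at irregular boundary points of $K$ --- but it is also superfluous, as taking $\tau$ large for a fixed $U$ already yields $\tau^2>M_U/|\log\theta|$.
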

	
	Since $A(\D)$ is dense in $H(\D)$ (with respect to the topology of locally uniform convergence), the conclusion of the theorem holds true if one replaces $A(\D)$ with $H(\D)$. Moreover, by a Baire argument, the set of $K$-frequently universal Taylor series is meagre in $A(\D)$ and in $H(\D)$.

	Like most of the constructive proofs of the existence of frequently universal vectors for sequences of operators, the proof makes use of the existence of countably many disjoint sets with positive lower density, satisfying suitable conditions. In our case, we will need the following lemma.
	
	\begin{lemma}[Lemma 2.2 in \cite{CharpentierErnstMestiriMouze2022}]\label{lemma-FHC-sets}Let us fix $\kappa>1$, $\nu\in \N\setminus\{0\}$, and an increasing sequence $(N_p)_p$ of positive integers. There exists a family $(E_p)_p$ of increasing sequences of multiples of $\nu$, such that:
		\begin{enumerate}
			\item for any $p\in\N$, the set $E_p$ has positive lower density;
			\item for any $p\neq q\in\N$ and any $(n,m)\in E_p\times E_q$ with $n>m$, one has $n> \kappa m$;
			\item for any $p\in\N$, we have $\min\{k\in E_p\}\geq N_p$.
		\end{enumerate}
	\end{lemma}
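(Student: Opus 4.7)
The strategy is to slice $\N$ into geometrically growing \emph{scales} $I_k := [\kappa^{2k},\kappa^{2k+1})$ for $k \in \N$, leaving the interleaved gaps $[\kappa^{2k+1},\kappa^{2k+2})$ empty, and to assign each scale entirely to one of the sequences $E_p$. The key observation is that any two points $n \in I_k$ and $m \in I_{k'}$ with $k > k'$ satisfy
\[
\frac{n}{m} > \frac{\kappa^{2k}}{\kappa^{2k'+1}} = \kappa^{2(k-k')-1} \geq \kappa,
\]
with strict inequality because $m < \kappa^{2k'+1}$. Hence the multiplicative separation demanded in (2) is automatic as soon as distinct sequences $E_p$ and $E_q$ are built from multiples of $\nu$ lying in \emph{disjoint} collections of scales, and the whole task reduces to choosing such a partition of scales in a way that also yields positive lower density for each $E_p$.

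To this end I would use the \emph{dyadic ruler partition}: for $p \in \N$, let
\[
S_p := \{k \in \N : v_2(k+1) = p\} = \{(2j+1)2^p - 1 : j \in \N\},
\]
where $v_2$ denotes the $2$-adic valuation. Uniqueness of the $2$-adic valuation shows that $(S_p)_p$ is a partition of $\N$, and each $S_p$ is an arithmetic progression with common difference $2^{p+1}$. For each $p$, choose $k_p \in \N$ large enough so that $\kappa^{2k_p} \geq N_p$ and $\nu\N \cap I_k \neq \emptyset$ for every $k \geq k_p$ (it suffices that $\kappa^{2k_p}(\kappa - 1) \geq 2\nu$), and set
\[
E_p := \bigcup_{k \in S_p,\, k \geq k_p} (\nu\N \cap I_k).
\]

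Condition (3) is then immediate from the choice of $k_p$, and (2) follows from the first-paragraph observation together with the pairwise disjointness of the $S_p$'s. The delicate step is (1): for $N$ large, let $k_N := \lfloor (\log_\kappa N)/2 \rfloor$ and $k^* := \max(S_p \cap [k_p, k_N])$. The common difference $2^{p+1}$ of $S_p$ gives the gap bound $k_N - k^* \leq 2^{p+1}$, whence
\[
\frac{|E_p \cap [0, N]|}{N} \geq \frac{|\nu\N \cap I_{k^*}|}{N} \geq \frac{(\kappa - 1)\,\kappa^{2k^*}}{2\nu\,\kappa^{2k_N+2}} \geq \frac{\kappa - 1}{2\nu\,\kappa^{2^{p+2}+2}} > 0
\]
uniformly in large $N$, so $\dlow(E_p) > 0$. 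The main obstacle of the construction is exactly this friction between (1) and (2): multiplicative separation forces one to work on a geometrically sparse sequence of scales, along which densities collapse as soon as too many consecutive scales are skipped for a given $E_p$. The dyadic ruler is tailor-made to reconcile these two requirements, since it produces countably many pairwise disjoint subsets of $\N$ all having \emph{bounded} gaps, with the explicit upper bound $2^{p+1}$ for $S_p$.
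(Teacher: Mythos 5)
The paper does not actually prove this lemma: it quotes it as Lemma 2.2 of \cite{CharpentierErnstMestiriMouze2022}, so there is no internal proof to compare against. Your construction is, in substance, the standard one behind that reference: geometric blocks $I_k=[\kappa^{2k},\kappa^{2k+1})$ separated by empty gaps (which makes the multiplicative separation in (2) automatic across distinct blocks), distributed among the $E_p$ via a partition of the block indices into arithmetic progressions with bounded gaps. Conditions (2) and (3) are verified correctly, and the count $|\nu\N\cap I_{k^*}|\ge(\kappa-1)\kappa^{2k^*}/(2\nu)$ is legitimately backed by your choice of $k_p$.

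There is one slip in the density estimate. The inequality $|E_p\cap[0,N]|\ge|\nu\N\cap I_{k^*}|$ presupposes $I_{k^*}\subseteq[0,N]$, which can fail when $k^*=k_N$: the definition $k_N=\lfloor(\log_\kappa N)/2\rfloor$ only guarantees $\kappa^{2k_N}\le N$, not $\kappa^{2k_N+1}\le N$, so for $N$ near the left endpoint of $I_{k_N}$ (e.g.\ $N=\kappa^{2k_N}$) the set $E_p\cap[0,N]$ meets $I_{k_N}$ in at most one point, and the claimed lower bound is not justified. The fix is a one-liner: take $k^*:=\max\bigl(S_p\cap[k_p,k_N-1]\bigr)$, so that $\sup I_{k^*}\le\kappa^{2k_N-1}<N$; the gap bound becomes $k_N-k^*\le 2^{p+1}+1$ and the final constant degrades by a factor $\kappa^2$, which does not affect positivity of the lower density. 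With that correction your proof is complete and self-contained.
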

	
	\begin{proof}[Proof of Theorem \ref{FHC-Taylor-series}]Since polynomials are dense in $A(\D)$, and since $f+P$ clearly belongs to $FU(\D,K)$ whenever $f$ belongs to $FU(\D,K)$ and $P$ is a polynomial, it is enough to build a function in $FU(\D,K)$ that is in $A(\D)$ and then the density of such functions would follow.
		
		Let $K$ be a compact subset of $\C\setminus \overline{\D}$ with connected complement and $U$ be a bounded open set containing $K$ such that $\overline{\D}\cap U=\emptyset$.		
		Let $(\vp_p,r_p)_p$ be an enumeration of all pairs $(\vp,r)$ where $\vp$ runs over a set of polynomials that is dense in $A(K)$, and where $r$ runs over $\Q^+:=\Q\cap\{x>0\}$. We may and shall assume that
		\begin{equation}\label{eq-vp}
			\Vert \vp_p \Vert _{\overline{U}} \leq \frac{p}{4},\quad p\in \N.
		\end{equation}
		By Mergelyan's theorem, the countable set of non-empty open balls $\left\{U_p:=B(\vp_p,r_p):\,p\in \N\right\}$ is a basis for the topology of $A(K)$. To prove the theorem, it is therefore enough to prove the existence of $f\in A(\D)$ with the property that for any $p\in \N$,
		\[
		\dlow\left(\{n\in \N:\, S_n^{(K)}(f) \in U_p\}\right)>0.
		\]
		
		Then,  let $\tau\geq 2$, $\theta\in (0,1)$ and $N\in \N$ be given by  Corollary \ref{cor-incomplete-1} applied with $K$ and $U$ as above, and $L=\overline{\D}$. In the sequel, we may and shall assume that $\tau$ also satisfies
		\begin{equation}\label{eq-assumpt-tau-FHC}
			G_U^{1/\tau}\theta \in (0,1),
		\end{equation}
		where $G_U=\max_{z\in \overline{U}}e^{g_{\overline{\C}\setminus \overline{\D}}(z,\infty)}$ and $g_{\overline{\C}\setminus \overline{\D}}(\cdot,\infty)$ is the Green function of $\overline{\C}\setminus \overline{\D}$ at $\infty$.
		
		Let us now define a sequence $(N_p)_p$ of integers such that for any $p\geq 0$, $N_p\geq N$, and for any $n\geq N_p$,
		\begin{equation}\label{eq-Np}
			n(G_U^{1/\tau}\theta)^{n}< \min\left\{\frac{1}{2},r_p\right\}.
		\end{equation}
		Note that the latter is possible by the choice of $\tau$ in \eqref{eq-assumpt-tau-FHC}.
		
		Finally, we fix a sequence $(E_p)_p$ of disjoint subsets of $\N$ with positive lower density, given by Lemma \ref{lemma-FHC-sets} applied  to the sequence $(N_p)_p$ and to $\kappa=\tau+1$. Let us denote by
		$(s_n)_n$ an increasing enumeration of $\cup_p E_p$. For every $n\in \N$, we denote by $p_n\in \N$ the (only) integer such that $s_n \in E_{p_n}$.

		We will build by induction on $n$ a sequence of polynomials $P_n$ with degree at most $s_n$, $n\in \N$. The function $f$ will then be written as the sum $\sum _n P_n$. Let us start with $P_0$ and let $p_0\in \N$ be such that $s_0\in E_{p_0}$. Note that $s_0 \geq N_{p_0}\geq N$. Thus, by Corollary \ref{cor-incomplete-1}, there exists a polynomial $P_0$ of the form $\sum_{k= s_0/\tau +1}^{s_0}a_kz^k$ that satisfies, using $\Vert \vp_{p_0} \Vert_{\overline{U}} \leq s_0$,
		\begin{equation}\label{eq1-FHC-proof}
			\Vert P_0  \Vert _{\overline{\D}} \leq s_0\theta^{s _0}\quad \text{and}\quad \Vert P_0 - \vp_{p_0} \Vert _K \leq s_0\theta^{s _0}.
		\end{equation}
		We will now build the polynomials $P_n$, $n\geq 1$. Let us assume that $P_0,\ldots,P_{n-1}$ have been built, for some $n\geq 1$. In order to define $P_n$, we distinguish two cases. If $p_n=p_{n-1}$ (meaning that $s_n$ and $s_{n-1}$ both belongs to $E_{p_{n-1}}=E_{p_n}$), then we set $P_n \equiv 0$. If not, we will use again Corollary \ref{cor-incomplete-1} to choose $P_n$ satisfying the following properties:
		\begin{enumerate}[(a)]
			\item \label{item1-FHC-proof}$\lfloor s_n/\tau \rfloor +1\leq \text{val}(P_n)\leq \text{deg}(P_n)\leq s_n$;
			\item \label{item2-FHC-proof}$\Vert P_n\Vert _{\overline{\D}} \leq s_n(G_U^{1/\tau}\theta)^{s _n}$;
			\item \label{item3-FHC-proof}$\left\Vert P_n - \left(\vp_{p_n} - \sum_{j=0}^{n-1}P_j\right)\right\Vert _K \leq s_n(G_U^{1/\tau}\theta)^{s _n}$.
		\end{enumerate}
		More precisely, since $N_{p_n}\geq  N$, Corollary \ref{cor-incomplete-1} applied to $\vp=\vp_{p_n}-\sum_{j=0}^{n-1}P_j$ yields a polynomial $P_{n}$ satisfying \eqref{item1-FHC-proof} and such that
		\begin{eqnarray}\label{eq2-FHC-proof}
			\notag \max\left\{\Vert P_n\Vert _{\overline{\D}},\left\Vert P_n - \left(\vp_{p_n}-\sum_{j=0}^{n-1}P_j\right)\right\Vert _K\right\} & \leq & \left\Vert\vp_{p_n} - \sum_{j=0}^{n-1}P_j\right\Vert _{\overline{U}}\theta^{s_n}\\
			& \leq & \left(\frac{s_n}{4} + \left\Vert \sum_{j=0}^{n-1}P_j\right\Vert _{\overline{U}}\right)\theta^{s_n}.
		\end{eqnarray}
		Now, by Bernstein's lemma (see Theorem 5.5.7 (a) in \cite{Ransford1995}), we get
		\[
		\left\Vert \sum_{j=0}^{n-1}P_j\right\Vert _{\overline{U}} \leq \left\Vert \sum_{j=0}^{n-1}P_j\right\Vert _{\overline{\D}}G_U^{s_{n-1}}\leq \frac{n}{2}G_U^{\frac{s_n}{\tau}},
		\]
		where we used the induction hypothesis, \eqref{eq-Np}, $G_U\geq 1$ and $s_{n-1}< s_n/\tau$ (which holds true since $E_{p_{n-1}}\neq E_{p_n}$).
		The previous estimate together with \eqref{eq2-FHC-proof} yields
		\[
		\max\left\{\Vert P_n\Vert _{\overline{\D}},\left\Vert P_n - \left(\vp_{p_n}-\sum_{j=0}^{n-1}P_j\right)\right\Vert _K\right\}\leq \frac{s_n}{4}\theta^{s_n} + \frac{s_n}{2}\left(G_U^{1/\tau}\theta\right)^{s_n},
		\]
		hence \eqref{item2-FHC-proof} and \eqref{item3-FHC-proof} (using again $G_U\geq 1$).
		
		Setting $f=\sum_n P_n$, it is clear by \eqref{item2-FHC-proof} that $f\in A(\D)$. Let us check that the partial sums of $f$ visit frequently any $U_p$, $p\in \N$. Let us fix $p\in \N$ and $l\in E_p$. We let $m:=\min\{n\in \N:\,\{s_{n},s_{n+1},\ldots,l\}\subset E_p\}$.  Let us remind that for any $u,v\in \N$ and $p\neq q$ such that $s_v\in E_q$, $s_u\in E_p$ and $s_{v}>s_{u}$, one has $s_{v}>(\tau +1)s_{u}$. Therefore, we deduce from the constructions of the $P_n$ (and in particular from items \eqref{item1-FHC-proof} and \eqref{item3-FHC-proof}) that
		\[
		\left\Vert S_l(f) - \vp_p \right\Vert _K = \left\Vert \sum_{j=0}^{m}P_j - \vp _p\right\Vert _K \leq s_m(G_U^{1/\tau}\theta)^{s _m}< r_p,
		\]
		where the last inequality comes from \eqref{eq-Np}. We conclude that $S_l(f)\in U_p$ for any $l\in E_p$.

	\end{proof}

	\begin{remark}\label{remark-papad}The proof of Theorem \ref{FHC-Taylor-series} gives us a bit more than the statement. It actually tells us that there exists $f\in A(\D)$ such that, for any compact set $K\subset \C\setminus \overline{\D}$, with connected complement, and any $p\in \N$, there exists an increasing sequence $(n_k)_k$, with positive density, such that $S_{n_k}(f) \to \vp_p$ uniformly on $K$. We mention that, as noticed in \cite{Papachristodoulos2013}, the previous cannot hold replacing the countable family of all $\vp_p$, $p\in \N$, by the set of all polynomials.
	\end{remark}
	
	Let now $\alpha =(\alpha_n)_n$ be an admissible sequence. We immediately deduce the following corollary from Theorem \ref{FHC-Taylor-series} and Proposition \ref{prop-equi-density-delta2}.
	\begin{cor}\label{cor-alpha-FUTS}Let $\alpha$ be an admissible sequence. Assume that $\alpha  \lesssim \beta$ for some non-decreasing sequence $\beta$, for which $\vp_{\beta}$ satisfies the condition $\Delta_2$. Then, for any compact set $K\subset \C\setminus \overline{\D}$ with connected complement, the set $FU_{\alpha}(\D,K)$ is non-empty.
	\end{cor}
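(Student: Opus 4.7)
The plan is to bootstrap from Theorem \ref{FHC-Taylor-series}, which produces functions $f \in A(\D)$ that are $K$-frequently universal with respect to the natural lower density $\dlow$, to the weighted setting by exploiting the comparison $\alpha \lesssim \beta$ and the $\Delta_2$ regularity of $\vp_\beta$. Concretely, I would start by applying Theorem \ref{FHC-Taylor-series} to obtain an $f \in FU(\D,K) \cap A(\D)$. Inspecting the construction there (see also Remark \ref{remark-papad}), we know that for any basic open set $U_p$ of a countable basis of $A(K)$, the set $E_p := \{n \in \N : S_n(f) \in U_p\}$ contains a subset of positive natural lower density; consequently $\dlow(\{n \in \N : S_n(f) \in U\}) > 0$ for every non-empty open $U \subseteq A(K)$.

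Next, I would show that this positivity of $\dlow$ transfers to positivity of $\dlow_\alpha$. The hypothesis that $\vp_\beta$ satisfies the condition $\Delta_2$ gives us, via the argument contained in Proposition \ref{prop-equi-density-delta2} (the implication from $\Delta_2$ to the comparison $\dlow(E) > 0 \Rightarrow \dlow_\beta(E) > 0$ does not need the extra monotonicity hypothesis on the ratios $\vp_\beta(n)/\vp_\beta(Cn)$), that for every $E \subseteq \N$,
\[
\dlow(E) > 0 \ \Longrightarrow\ \dlow_\beta(E) > 0.
\]
Combining this with the scaling property $\alpha \lesssim \beta \Rightarrow \dlow_\beta \leq \dlow_\alpha$ recorded in Section \ref{prelim-1-1} (Lemma 2.8 in \cite{ErnstMouze2019}), we deduce the chain
\[
\dlow(E) > 0 \ \Longrightarrow\ \dlow_\beta(E) > 0 \ \Longrightarrow\ \dlow_\alpha(E) > 0,
\]
valid for every $E \subseteq \N$.

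Applying this implication to $E = \{n \in \N : S_n(f) \in U\}$ for each non-empty open $U \subseteq A(K)$ shows that $f \in FU_\alpha(\D,K)$, proving the corollary. There is no real obstacle here: the whole content of the argument is the correct direction of the inequalities between weighted densities, together with the known fact that $\Delta_2$ is exactly what is needed to go from $\dlow > 0$ to $\dlow_\beta > 0$ without loss.
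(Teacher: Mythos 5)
Your argument is correct and is exactly the paper's (the paper states the corollary follows immediately from Theorem \ref{FHC-Taylor-series} together with Proposition \ref{prop-equi-density-delta2}): take $f\in FU(\D,K)$ from Theorem \ref{FHC-Taylor-series}, use the $\Delta_2$ condition on $\vp_\beta$ to pass from $\dlow(E)>0$ to $\dlow_\beta(E)>0$, and then use $\alpha\lesssim\beta$ to get $\dlow_\beta\leq\dlow_\alpha$. Your remark that the relevant implication in Proposition \ref{prop-equi-density-delta2} does not need the monotonicity hypothesis on the ratios $\vp_\beta(n)/\vp_\beta(Cn)$ is accurate and in fact needed, since the corollary does not assume it for $\beta$.
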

	
	We shall now prove that the $\Delta_2$ condition is sharp, under the extra condition that the quotient sequence $(\frac{\vp_{\alpha}(n)}{\vp_{\alpha}(Cn)})_n$ is eventually non-increasing for any $C>1$.
	
	\begin{prop}\label{thm-non-FUTS-alpha-fast}Let $\alpha$ be an admissible weight such that $(\frac{\vp_{\alpha}(n)}{\vp_{\alpha}(Cn)})_n$ is eventually non-increasing for any $C>1$, and  assume that $\vp_{\alpha}$ does not satisfy the $\Delta_2$ condition. Let also $K\subset \C\setminus \overline{\D}$ be a compact set, with connected complement. If $FU_{\alpha}(\D,K)$ is non-empty, then $K$ has the property that any polynomial can be uniformly approximated on $K$ by incomplete polynomials of the form $\sum_{k=\frac{n}{\tau_n}}^{n} a_k z^k$, for some non-increasing sequence $(\tau_n)_n$ with $\tau_n \to 1$. In particular, $K$ has empty interior.
	\end{prop}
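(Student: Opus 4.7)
First, I will use the hypotheses on $\alpha$ to derive that $\vp_{\alpha}(n)/\vp_{\alpha}(Cn)\to 0$ for every $C>1$. Since $(\vp_{\alpha}(n)/\vp_{\alpha}(Cn))_n$ is eventually non-increasing, its limit $L(C)\in[0,1]$ exists. The factorisation
\[
\frac{\vp_{\alpha}(n)}{\vp_{\alpha}(C_1 C_2 n)}=\frac{\vp_{\alpha}(n)}{\vp_{\alpha}(C_1 n)}\cdot \frac{\vp_{\alpha}(C_1 n)}{\vp_{\alpha}(C_1 C_2 n)}
\]
together with the substitution $m=\lfloor C_1 n\rfloor\to \infty$ yields $L(C_1 C_2)=L(C_1)L(C_2)$. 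Moreover, $L$ is non-increasing in $C$ since $\vp_{\alpha}$ is non-decreasing. As noted in the proof of Proposition \ref{prop-equi-density-delta2}, the failure of $\Delta_2$ combined with the monotonicity hypothesis forces some $C_0>1$ with $L(C_0)=0$. Multiplicativity then gives $L(C_0^{1/k})^k=L(C_0)=0$ for every $k\geq 1$, and as $C_0^{1/k}\searrow 1$, monotonicity of $L$ yields $L(C)=0$ for every $C>1$.

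Next, I fix $f\in FU_{\alpha}(\D,K)$ and, given a polynomial $\vp\in A(K)$ and parameters $\varepsilon,\eta>0$, set $A=\{n:\Vert S_n^{(K)}(f)\Vert_K<\varepsilon/2\}$ and $B=\{n:\Vert S_n^{(K)}(f)-\vp\Vert_K<\varepsilon/2\}$. Both sets have positive lower $\alpha$-density, say bounded below by some $\delta>0$. Writing $C=1+\eta$, for all $N$ sufficiently large,
\[
\sum_{k\in A,\,k\in (N,CN]}\alpha_k\geq \tfrac{\delta}{2}\vp_{\alpha}(CN)-\vp_{\alpha}(N),
\]
which is strictly positive by the first paragraph. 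Hence $A\cap (N,(1+\eta)N]\neq\emptyset$ and, analogously, $B\cap (N,(1+\eta)N]\neq\emptyset$. Choosing $m\in A$ and $n\in B$ in this common interval, the difference $Q=S_{\max(m,n)}^{(K)}(f)-S_{\min(m,n)}^{(K)}(f)$ is an incomplete polynomial whose valuation-to-degree ratio is at least $1/(1+\eta)$, and by the triangle inequality either $Q$ (if $m<n$) or $-Q$ (if $n<m$) lies within $\varepsilon$ of $\vp$ on $K$.

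To build a single sequence $(\tau_n)_n$ as required, I enumerate a countable family $\{(\vp_p,1/k):p,k\in\N\}$ dense in $A(K)\times\Q^+$ (via Mergelyan) as $(q_j,\varepsilon_j)_j$. For each $j$ I invoke the previous step with $\vp=q_j$, $\varepsilon=\varepsilon_j$, $\eta=1/j$, imposing that the resulting maximum $M_j:=\max(m_j,n_j)$ exceeds $M_{j-1}$ and is larger than any prescribed threshold. Setting $\rho_j:=M_j/\min(m_j,n_j)\leq 1+1/j$ and, after re-indexing so that $(\rho_j)$ is non-increasing (possible since $\rho_j\to 1$), defining $\tau_n=\rho_j$ for $n\in [M_j,M_{j+1})$ produces a non-increasing sequence tending to $1$ such that each $q_j$ is approximated on $K$ within $\varepsilon_j$ by an incomplete polynomial of the form $\sum_{k=n/\tau_n}^{n}a_k z^k$ with $n=M_j$. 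This establishes the approximation statement.

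Finally, if $K$ contained a disc $\overline{D(a,r)}$ with $r>0$, then for every fixed $\tau>1$ and all large $n$ one has $\tau_n<\tau$, so $\lfloor n/\tau_n\rfloor\geq \lfloor n/\tau\rfloor$, meaning the approximating polynomials above also have the shape $\sum_{k=n/\tau}^{n}(\cdot)z^k$. Hence the set of all polynomials of this form is dense in $A(\overline{D(a,r)})$, and the reformulation of Corollary 2.3 in \cite{PritskerVarga1998} quoted after Corollary \ref{cor-incomplete} forces $r\leq a(\tau-1)/(\tau+1)$ for every $\tau>1$, so $r=0$, a contradiction. The main obstacle is the first paragraph, namely the passage from the failure of $\Delta_2$ at a single $C_0$ to $L(C)=0$ for \emph{every} $C>1$, which is exactly the point at which the multiplicativity of $L$ and the monotonicity hypothesis on $\alpha$ must interlock; the rest of the argument is combinatorial bookkeeping together with the Pritsker--Varga obstruction for discs.
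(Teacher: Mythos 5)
Your proof is correct and follows essentially the same route as the paper's: both extract, from the positive lower $\alpha$-densities of $\{n:\Vert S_n^{(K)}(f)\Vert_K<\varepsilon/2\}$ and $\{n:\Vert S_n^{(K)}(f)-\vp\Vert_K<\varepsilon/2\}$ together with $\vp_{\alpha}(N)/\vp_{\alpha}(CN)\to 0$, indices $m<l$ with $l/m\to 1$ whose difference of partial sums gives the incomplete approximant, and both invoke the Pritsker--Varga disc obstruction for the empty-interior conclusion (the paper packages the interval-hitting step as Lemma \ref{lemma-non-FU-alpha}, and takes the limit $\vp_{\alpha}(n)/\vp_{\alpha}(Cn)\to 0$ for granted where you supply the multiplicativity argument for $L$). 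The only rough spot is the ``re-indexing so that $(\rho_j)$ is non-increasing,'' which would scramble the pairing with the degrees $M_j$; the standard fix is to set $\tau_n=\sup_{i\geq j}\rho_i$ for $n\in[M_j,M_{j+1})$, which is non-increasing, tends to $1$, and still dominates $\rho_j$ at $n=M_j$.
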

	
	For the proof, we will need the following easy lemma.
	
	\begin{lemma}\label{lemma-non-FU-alpha}Let $\alpha$ be an admissible weight such that $(\frac{\vp_{\alpha}(n)}{\vp_{\alpha}(Cn)})_n$ is non-increasing for any $C>1$. Let $(n_k)_k$ be an increasing sequence of positive intergers. If $\vp_{\alpha}$ does not satisfy the condition $\Delta _2$, then, for any $C>1$ and any $k_0\in \N$,
		\[
		\dup_{\alpha}\left(\N\cap \bigcup_{k \geq k_0} \left[\lfloor \frac{n_k}{C}\rfloor,n_k \right]\right) =1.
		\]
	\end{lemma}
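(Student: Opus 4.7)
The strategy is to reduce the lemma to showing the preliminary claim that $\vp_\alpha(n)/\vp_\alpha(Cn) \to 0$ as $n \to \infty$, for every $C > 1$. Granting this, the conclusion of the lemma follows directly from the following density computation. Set $E_C := \N \cap \bigcup_{k \geq k_0} [\lfloor n_k/C \rfloor, n_k]$. For every $j \geq k_0$, the interval $[\lfloor n_j/C \rfloor, n_j]$ is contained in $E_C$, hence
\[
\frac{1}{\vp_\alpha(n_j)} \sum_{k=1}^{n_j} \alpha_k \indicatrice{E_C}(k) \geq \frac{\vp_\alpha(n_j) - \vp_\alpha(\lfloor n_j/C \rfloor - 1)}{\vp_\alpha(n_j)} \geq 1 - \frac{\vp_\alpha(\lfloor n_j/C \rfloor)}{\vp_\alpha(C \lfloor n_j/C \rfloor)},
\]
where in the last step I used that $\vp_\alpha$ is non-decreasing and that $C \lfloor n_j/C \rfloor \leq n_j$. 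Since $m_j := \lfloor n_j/C \rfloor$ tends to infinity, the claim applied along $(m_j)_j$ shows that the right-hand side tends to $1$, whence $\dup_\alpha(E_C) \geq 1$, and the lemma is proved.

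To establish the claim, I would first handle $C = 2$. The negation of the $\Delta_2$ condition provides, for every $K > 0$, arbitrarily large integers $x$ satisfying $\vp_\alpha(x)/\vp_\alpha(2x) < 1/K$. Because the sequence $(\vp_\alpha(n)/\vp_\alpha(2n))_n$ is (eventually) non-increasing by hypothesis, it must then stay below $1/K$ from some point on, forcing $\vp_\alpha(n)/\vp_\alpha(2n) \to 0$. For $C \geq 2$, this extends for free by monotonicity of $\vp_\alpha$: $\vp_\alpha(n)/\vp_\alpha(Cn) \leq \vp_\alpha(n)/\vp_\alpha(2n)$.

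The case $1 < C < 2$ is the main technical point, and I would handle it by iteration. If $\vp_\alpha(n)/\vp_\alpha(Cn)$ did not tend to $0$, then by the monotonicity hypothesis it would stay bounded below by some $c > 0$ for all large $n$. Composing this estimate $k$ times would give $\vp_\alpha(n)/\vp_\alpha(C^k n) \geq c^k$ for every $k$ and all sufficiently large $n$. Choosing $k$ so that $C^k \geq 2$ and invoking once more the monotonicity of $\vp_\alpha$ yields $\vp_\alpha(n)/\vp_\alpha(2n) \geq c^k > 0$, contradicting the $C = 2$ case. I expect this iteration to be the main subtle step of the proof, although it is quite short; the rest of the argument is essentially a clean density computation combined with a direct exploitation of the negation of $\Delta_2$.
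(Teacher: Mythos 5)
Your proof is correct and follows essentially the same route as the paper: bound the $\alpha$-mass of $\bigcup_k[\lfloor n_k/C\rfloor,n_k]$ along the subsequence $(n_j)_j$ by $1-\vp_\alpha(\lfloor n_j/C\rfloor)/\vp_\alpha(n_j)$ and invoke $\vp_\alpha(n)/\vp_\alpha(Cn)\to 0$. The only difference is that the paper simply asserts this last convergence ``by assumption'' for every $C>1$, whereas you actually derive it from the failure of $\Delta_2$ (which only speaks of $C=2$) via the monotonicity hypothesis and the iteration handling $1<C<2$; your iteration is sound modulo routine bookkeeping with integer parts, and it usefully fills in a step the paper leaves implicit.
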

	
	\begin{proof}By assumption, $\frac{\vp_{\alpha}(n)}{\vp_{\alpha}(Cn)} \to 0$ as $n\to \infty$, for any $C>1$. Then the verification of the lemma is straightforward, since
		\[
		\dup_{\alpha}\left(\N\cap \bigcup_{k \geq k_0} \left[\lfloor \frac{n_k}{C}\rfloor,n_k \right]\right)\geq 1 - \lim_k \frac{\vp_{\alpha}(n_k/C)}{\vp_{\alpha}(n_k)} =1.
		\]
	\end{proof}
	
	\begin{proof}[Proof of Proposition \ref{thm-non-FUTS-alpha-fast}]We shall first prove the first assertion of the statement. Assume that $f=\sum_ka_kz^k$ belongs to $FU_{\alpha}(\D,K) \neq \emptyset$. Let $\vp$ be any non-zero polynomial, and let $\varepsilon_n>0$ with $\varepsilon_n\to 0$ as $n\to \infty$. We may and shall assume that $B(\vp,\varepsilon_n)\cap B(0,\varepsilon_n)=\emptyset$, $n\in \N$, where $B(x,r)$ denotes the ball of $A(K)$ centered at $x$ with radius $r$.
		
		For any $n\in \N$, there exists $E_n \subset \N$ such that $\dlow_{\alpha}(E_n)>0$ and $S_l(f)\in B(\vp,\varepsilon_n)$ for every $l\in E_n$. Similarly, there exist $F_n \subset \N$ with $\dlow_{\alpha}(F_n)>0$ such that $S_l(f)\in B(0,\varepsilon_n)$, $n\in \N$, $l\in F_n$. By assumption, $E_n\cap F_n =\emptyset$, $n\in \N$, hence by Lemma \ref{lemma-non-FU-alpha}, for any $C>1$ and any $n\in \N$, there exists $l\in E_n$ and $k\in F_n$ such that $k<l$ and $k\in \{\lfloor \frac{l}{C}\rfloor,\ldots, l\}$. In particular we can build two increasing sequences $(l_n)_n$ and $(k_n)_n$ such that $\lim l_n/k_n=1$ and, for any $n\in \N$,
		\[
		\Vert S_{l_n}(f) - \vp \Vert _{K} \leq \varepsilon_n\quad \text{and}\quad 	\Vert S_{k_n}(f) \Vert _{K} \leq \varepsilon_n.
		\]
		In particular, for any $n\in \N$,	$(\sum_{i=k_n}^{l_n}a_iz^i)_n$ converges to $\vp$ uniformly on $K$, which gives the first part of the theorem.
		
		For the last assertion, let us assume by contradiction that $K$ has non empty interior, and let $D(a,r)$ be a non-empty open disc contained in $K$. We easily deduce from Corollary 2.3 in \cite{PritskerVarga1998} that there exists $\tau_0 >1$ such that for any $\tau \in (1,\tau_0)$, there exist compact sets $K' \subset D(a,r)$ such that the set of all incomplete polynomials of the form $\sum_{k=n/\tau}^na_kz^k$ is not dense in $A(K')$, whence that no set of incomplete polynomials of the form $\sum_{k=n/\tau_n}^{n}a_kz^k$ with $\tau_n\to 1$ can be dense in $A(K')$.
	\end{proof}
	
	\begin{example}{\rm The previous results show that, among the classes of weighted densities given in Example \ref{examples-densities}, $FU_{\alpha}(\D,K)$ is non-empty for some $K\subset \C\setminus \overline{\D}$, with connected complement and non-empty interior, if and only if $\alpha \in \PP_r$, for some $r> -1$, or $\alpha=(1/k)_{k\geq 1}$ (logarithmic density).
		}
	\end{example}
	
	\begin{remark}{\rm Proposition \ref{thm-non-FUTS-alpha-fast} exhibits a strong contrast with Ernst and Mouze's result asserting that sequences of operators that satisfy the frequent universality criterion are frequently universal with respect to all reasonable weighted densities \cite{ErnstMouze2019,ErnstMouze2021}. In particular, for compact sets $K$ with non-empty interior, this shows that the sequence $(S_n^{(K)})_n$ does not satisfy the frequent universality criterion, although it is universal.}
	\end{remark}
	
	Let us now focus on the case of the logarithmic density and prove Theorem \hyperref[thma]{A}. The logarithmic density offers a bit more flexibility than the natural density, as it allows to choose sets $E_p$ with $\dlow_{\log}(E_p)>0$, that are increasingly separated from each other. This allows us to use Corollary \ref{cor-incomplete} instead of Corollary \ref{cor-incomplete-1}. Let us restate Theorem restate \hyperref[thma]{A}. We recall that we denote by $\MM$ the set of all compact sets in $\C\setminus \D$, with connected complement.
	
	\begin{theorem}\label{thm-log-density-FUTS}The set $\bigcap_{K\in \MM}FU_{\log}(\D,K)$ is dense in $H(\D)$.
	\end{theorem}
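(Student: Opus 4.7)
The plan is to mirror the proof of Theorem \ref{FHC-Taylor-series} with two essential modifications: (i) replace Lemma \ref{lemma-FHC-sets} by a logarithmic-density analogue producing pairwise disjoint sets $E_q \subset \N$ with $\dlow_{\log}(E_q)>0$ and mutual super-linear separation (for instance $n > m^2$ whenever $n \in E_q$, $m \in E_{q'}$, $q \neq q'$, $n > m$), constructible as unions of intervals of the form $I_{u,a}^{\varepsilon} = [2^{(1-\varepsilon)a^{2u}},2^{(1+\varepsilon)a^{2u}}]$ along the lines of the constructions in \cite{ErnstMouze2019}; and (ii) invoke Corollary \ref{cor-incomplete} (with $\tau_n \to +\infty$) in place of Corollary \ref{cor-incomplete-1}. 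To obtain common universality over all $K \in \MM$, the index $q$ will enumerate triples $(K_m,\vp_p,r_p)$ where $(K_m)_m$ is an exhaustion of $\MM$ (every $K\in\MM$ is contained in some $K_m$), $(\vp_p)_p$ is a countable family of polynomials dense in each $A(K_m)$, and $r_p \in \Q^+$; density in $H(\D)$ will then follow from the invariance of $\bigcap_K FU_{\log}(\D,K)$ under addition of polynomials together with the density of polynomials in $H(\D)$.

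Fix an exhaustion $L_k = \overline{D(0,1-1/k)}$ of $\D$ and a sequence $\tau_n \to +\infty$ growing slowly enough that $n/\tau_n > \sqrt{n}$ for all $n$ large (e.g.\ $\tau_n = \log n$). Enumerate $\bigcup_q E_q$ increasingly as $(s_n)_n$ and let $q_n$ denote the unique $q$ with $s_n\in E_q$. The function $f=\sum_n P_n$ will be built inductively: when $q_n = q_{n-1}$, set $P_n \equiv 0$; otherwise, apply Corollary \ref{cor-incomplete} to $K = K_{m_{q_n}}$, $L = L_{k_n}$ (for a slowly growing sequence $(k_n)_n$) and $\vp = \vp_{p_{q_n}} - \sum_{j<n} P_j$, yielding a polynomial $P_n$ of the form $\sum_{k=s_n/\tau_{s_n}}^{s_n}a_k z^k$ with $\Vert P_n\Vert_{L_{k_n}}$ and $\Vert P_n - \vp\Vert_{K_{m_{q_n}}}$ both bounded by $\Vert\vp\Vert_{\overline{U}}\theta_n^{s_n}$ for some $\theta_n \in (0,1)$ depending on the current task. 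The super-linear separation between distinct $E_q$'s, together with $s_n/\tau_{s_n} > \sqrt{s_n}$, ensures that whenever $P_n\not\equiv 0$ the valuation of $P_n$ exceeds $\text{deg}(P_{n-1})$ (and that of every previously constructed non-zero polynomial), so that $\sum_{j\leq n}P_j = S_{s_n}(f)$; the construction is tailored so that $\Vert S_{s_n}(f) - \vp_{p_{q_n}}\Vert_{K_{m_{q_n}}} < r_{p_{q_n}}$ for all $n$ large.

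The main technical hurdle is to guarantee $f \in H(\D)$, that is, locally uniform convergence on $\D$, given that $K_{m_{q_n}}$ is allowed to touch $\partial \D$ (which rules out $L = \overline{\D}$, as used in Theorem \ref{FHC-Taylor-series}). Since the exhaustion level $L_{k_n}$ progressively retreats into $\D$, one must show, for each fixed compact $L_{k_0} \subset \D$, the convergence of $\sum_n \Vert P_n\Vert_{L_{k_0}}$. For the infinitely many tasks with $k_n \geq k_0$ this is immediate from the geometric estimate on $L_{k_n} \supset L_{k_0}$; for the finitely many tasks with $k_n < k_0$, one applies Bernstein's lemma to transfer the geometric estimate from $L_{k_n}$ to $L_{k_0}$ at the cost of an exponential factor $G_{k_n,k_0}^{s_n}$, choosing the thresholds $N_q$ (below which the construction is inactive) large enough so that the resulting series is summable. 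Once $f \in H(\D)$ is established, the frequent universality $\dlow_{\log}(\{\ell : S_\ell(f)\in B(\vp_p,r_p)\text{ on }K_m\}) \geq \dlow_{\log}(E_{(m,p)}) > 0$ follows directly from the construction, and hence holds for every $K \subset K_m$, $K \in \MM$.
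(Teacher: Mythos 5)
Your overall strategy is the same as the paper's: a logarithmic-density analogue of Lemma \ref{lemma-FHC-sets} with quadratic separation $n>m^2$ (the paper's Lemma \ref{lemma-log-FHC-sets}, built from the same intervals $I_{u,a}^{\varepsilon}$), Corollary \ref{cor-incomplete} with $\tau_n\to\infty$ in place of Corollary \ref{cor-incomplete-1}, an enumeration of triples $(K_i,\vp_p,r_p)$, a growing exhaustion $(L_k)_k$ of $\D$ for the simultaneous smallness, and the observation that the valuation bound $\mathrm{val}(P_n)\geq s_n/\tau_{s_n}>\sqrt{s_n}>s_{n-1}$ makes the blocks non-overlapping so that $S_l(f)$ is a partial sum of the $P_j$'s. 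The treatment of $f\in H(\D)$ is also essentially the paper's (for a fixed compact subset of $\D$ only finitely many blocks are not directly controlled, and a finite number of terms never obstructs convergence).

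There is, however, one step you gloss over which is where the real work lies, namely the phrase ``the construction is tailored so that $\Vert S_{s_n}(f)-\vp_{p_{q_n}}\Vert_{K}<r_{p_{q_n}}$.'' Corollary \ref{cor-incomplete} only gives the bound $\Vert\vp\Vert_{\overline{U}}\,\theta_n^{s_n}$ with $\vp=\vp_{p_{q_n}}-\sum_{j<n}P_j$, and $\overline{U}$ is a neighbourhood of $K_{m_{q_n}}$, hence lies \emph{outside} $\overline{\D}$. Your inductive hypotheses control the accumulated sum $\sum_{j<n}P_j$ only on compact subsets of $\D$ and on various earlier compact sets $K_{m}$; on $\overline{U}$ its norm is a priori unbounded, so the factor $\Vert\vp\Vert_{\overline{U}}$ could destroy the estimate. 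The paper resolves this by a second application of Bernstein's lemma (you invoke it only for the $f\in H(\D)$ part, where it is in fact not needed): since $\deg\bigl(\sum_{j<n}P_j\bigr)\leq s_{n-1}\leq\sqrt{s_n}$ by the quadratic separation, one gets $\Vert\sum_{j<n}P_j\Vert_{\overline{U}}\leq\Vert\sum_{j<n}P_j\Vert_{\overline{D(0,1/2)}}G^{\sqrt{s_n}}$ with $G=\sup_{\overline{U}}e^{g_{\overline{\C}\setminus\overline{D(0,1/2)}}(\cdot,\infty)}$, and the sub-exponential factor $G^{\sqrt{s_n}}$ is then absorbed by $\theta_n^{s_n}$, provided the thresholds $N_p(i)$ are chosen so that $s_n G^{\sqrt{s_n}}\theta_n^{s_n}\leq r_{p_n}/s_n^2$. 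You have all the ingredients for this (the separation and Bernstein's lemma), but as written the step is missing, and without it the claimed bound in item \eqref{item3-log-FHC-proof}-type estimates does not follow.
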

	
	To prove Theorem \ref{thm-log-density-FUTS}, we shall first state a lemma analogous to Lemma \ref{lemma-FHC-sets}, that is more or less already contained in \cite{ErnstMouze2021}. For the sake of clarity and completeness, we will give some details of this proof.
	
	\begin{lemma}\label{lemma-log-FHC-sets}Let us fix a family $(N_p(i))_p$, $i\in \N$, of increasing sequences of positive integers. There exists a family $(E_p(i))_{p}$, $i\in \N$, of sequences of subsets of $\N$ such that:
		\begin{enumerate}
			\item for any $p,i\in\N$, the set $E_{p}(i)$ has positive $\log$-density;
			\item for any $(p,i)\neq (q,j)\in\N\times\N$ and any $(n,m)\in E_p(i)\times E_q(j)$ with $n>m$, one has $n> m^2$;
			\item for any $p,i\in\N$, we have $\min\{k\in E_p(i)\}\geq N_p(i)$.
		\end{enumerate}
	\end{lemma}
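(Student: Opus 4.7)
The plan is to adapt the construction used in the proof of Lemma 4.3 in \cite{ErnstMouze2019} combined with Lemma 2.2 in \cite{CharpentierErnstMestiriMouze2022}. Compared with Lemma \ref{lemma-FHC-sets}, the essential new feature is the \emph{quadratic} separation $n > m^2$ in condition (2), which forces the use of doubly exponentially spaced intervals rather than merely geometrically spaced ones. The resulting sets will have zero natural density but will retain positive logarithmic density, which is exactly what is needed to produce $\log$-frequently universal Taylor series in Theorem \ref{thm-log-density-FUTS}.

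The construction would proceed as follows. First I would partition $\N$ into a countable family $(A_p(i))_{(p,i) \in \N \times \N}$ of pairwise disjoint subsets, each with bounded gaps. Then I would fix $0 < \varepsilon < 1/5$ and $a > 1$ such that $a^2 > 2\frac{1+\varepsilon}{1-\varepsilon}$, and for each integer $u \geq 0$ define the interval
\[
I_u := \left[2^{(1-\varepsilon)a^{2u}},\, 2^{(1+\varepsilon)a^{2u}}\right].
\]
Upon discarding finitely many initial elements of each $A_p(i)$, I may assume that whenever $u \in A_p(i)$ the length of $I_u$ exceeds $2N_p(i)$, so that $I_u \cap N_p(i)\N$ is non-empty. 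Then I would set
\[
E_p(i) := \bigcup_{u \in A_p(i)} \left(I_u \cap N_p(i)\N\right).
\]
Condition (3) then holds by construction. Condition (2) follows directly from the choice of $a$: for $n \in I_u$ and $m \in I_v$ with $u > v$, a straightforward computation gives
\[
\frac{n}{m^2} \geq 2^{(1-\varepsilon)a^{2u} - 2(1+\varepsilon)a^{2v}} = 2^{(1-\varepsilon)a^{2v}\left(a^{2(u-v)} - 2\frac{1+\varepsilon}{1-\varepsilon}\right)} \geq 1.
\]

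The main obstacle, and the only non-trivial part of the argument, is verifying condition (1), that $\dlow_{\log}(E_p(i)) > 0$ for every $(p,i)$. The idea is that within each interval $I_u$ with $u \in A_p(i)$, a positive proportion (of order $1/N_p(i)$) of integers belongs to $E_p(i)$, and such an interval contributes a logarithmic mass of order $2\varepsilon a^{2u} \log 2$, while the total logarithmic mass up to the right endpoint of $I_u$ is of order $(1+\varepsilon)a^{2u} \log 2$. Because $A_p(i)$ has bounded gaps, the indices $u$ for which these intervals contribute appear often enough on the scale of $u$ to maintain a uniform positive lower bound on the logarithmic averages. The precise accounting of these partial sums, carried out in the proof of Lemma 4.3 in \cite{ErnstMouze2019}, would then yield the desired lower bound on $\dlow_{\log}(E_p(i))$.
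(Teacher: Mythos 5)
Your construction is identical to the paper's: the same doubly exponential intervals $I_u=[2^{(1-\varepsilon)a^{2u}},2^{(1+\varepsilon)a^{2u}}]$ indexed over pairwise disjoint bounded-gap sets $A_p(i)$, intersected with $N_p(i)\N$, with the same choice of $a$ and $\varepsilon$ and the same computation for condition (2), and the paper likewise delegates the verification of positive logarithmic density to Lemma 4.3 of Ernst--Mouze. The proposal is correct and takes essentially the same approach.
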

	
	\begin{proof}The proof is a combination of that of  Lemma 4.2 and Lemma 4.3 in \cite{ErnstMouze2021}, and Lemma 2.2 in \cite{CharpentierErnstMestiriMouze2022}. Let $a>1$ and $0<\varepsilon<1/5$ be given such that 
		\begin{equation}\label{eq1-lemma-FHC-log}
			a^2>2\frac{1+\varepsilon}{1-\varepsilon}.
		\end{equation}
		Let $A_p(i)$, $p,i\in \N$, be pairwise disjoints subsets of $\N$ with bounded gaps (that is, for any $p,i\in \N$, there exists $M_{p}(i)>0$ such that for any $n<m$ consecutive in $A_{p}(i)$, one has $n-m\leq M_{p}(i)$). Upon removing finitely many elements of each set $A_{p}(i)$, we may and shall assume that for any $u\in A_{p}(i)$,
		\begin{equation}\label{eq2-lemma-FHC-log}
			2^{(1+\varepsilon)a^{2u}}>2N_p(i).
		\end{equation}
		Now, for any $u\in \N$, we set
		\[
		I_{u,a}^{\varepsilon} = \left[2^{(1-\varepsilon) a^{2u}},2^{(1+\varepsilon) a^{2u}}\right].
		\]
		Using \eqref{eq2-lemma-FHC-log} and $\varepsilon<1/5$, we easily get that, for any $p,i\in \N$ and any $u\in A_p(i)$, the  length of the interval $I_{u,a}^{\varepsilon}$ is greater than $2N_p(i)$, and then that $I_{u,a}^{\varepsilon}$ contains at least one multiple of $N_p(i)$.
		
		Finally, let us define, for any $(p,i)\in \N\times \N$,
		\[
		E_p(i):=\bigcup_{u\in A_p(i)}I_{u,a}^{\varepsilon} \cap N_p(i)\N,
		\]
		where $N_p(i)\N$ denotes the set of all multiples of $N_p(i)$.
		It is clear that, for any $p,i\in \N$, the set $E_p(i)$ satisfies (3). Further, the proof of \cite[Lemma 4.3]{ErnstMouze2019} yields $\underline{d}_{\log}(E_p(i))>0$, that is (1). To check (2), let us fix $(p,i)\neq (q,j)$ and any $(n,m)\in E_p(i)\times E_q(j)$ with $n>m$. There exist $u>v$ such that $n\in I_{u,a}^{\varepsilon}$ and $m\in I_{v,a}^{\varepsilon}$. Then
		\[
		m^2\leq 2^{2(1+\varepsilon)a^{2v}}\quad \text{and} \quad n\geq 2^{(1-\varepsilon)a^{2u}}
		\]
		whence
		\[
		\frac{n}{m^2}\geq 2^{(1-\varepsilon)a^{2u} - 2(1+\varepsilon)a^{2v}} = 2^{(1-\varepsilon)a^{2v}\left(a^{2(u-v)}-2\frac{1+\varepsilon}{1-\varepsilon}\right)}  \geq 1,
		\]
		where the last inequality holds true since $a^2\geq 2\frac{1+\varepsilon}{1-\varepsilon}$ by \eqref{eq1-lemma-FHC-log}.
		
	\end{proof}
	
	Let us now turn to the proof of Theorem \ref{thm-log-density-FUTS}.
	
	\begin{proof}[Proof of Theorem \ref{thm-log-density-FUTS}]Since $f+P$ belongs to $\bigcap_{K\in \MM}FU_{\log}(\D,K)$ whenever $f$ belongs to $\bigcap_{K\in \MM}FU_{\log}(\D,K)$ and $P$ is a polynomial, it is enough to prove that $\bigcap_{K\in \MM}FU_{\log}(\D,K)$ is non-empty.
		
		Let $(\vp_p,r_p)_p$ be an enumeration of all pairs $(\vp,r)$ where $\vp$ runs over the set of polynomials whose coefficients have rational coordinates (note that this set is dense in $A(K)$ for any $K\in\MM$), and where $r$ runs over $\Q^+:=\Q\cap\{x>0\}$. Let $(K_i)_i$ be a sequence of compact subsets of $\C\setminus \D$, with connected complement, such that any compact set $K\subset \C\setminus \D$ that has connected complement, is contained in some $K_i$. The construction of such a sequence $(K_i)_i$ is standard in the theory of universality; we refer to Lemma 2.1 in \cite{Nestoridis1996}. Let $U_{i}\subset\C\setminus D(0,2/3)$ be a bounded open set containing $K_i$, $i\in \N$. For any $i\in \N$, let also $(L^i_n)_n$ be an increasing exhaustion of $\D$ such that $L^i_0 =\overline{D(0,1-\frac{1}{i+2})}$.
		For every $i,n\in \N$, let $\theta_{i,n} \in (0,1)$ and $N_{i,n}\in \N$ be given by Corollary \ref{cor-incomplete}, applied to $K_i$, $L_n^i$ and $U_i$. We define a sequence $(N_p(i))_p$, $i\in \N$, such that for any $i,p\in\N$ and any $n\geq N_p(i)$, one has
		\begin{equation}\label{eq-assumpt-log-0}
			N_p(i) \geq \max\left\{\Vert \vp_p \Vert_{\overline{U_i}},N_{i,0}\right\} \quad \text{and}\quad 2nG_i^{\sqrt{n}}\theta_{i,0}^{n}\leq \frac{r_p}{n^2},
		\end{equation}
		where $G_i=\sup_{z\in \overline{U_i}}e^{g_{\overline{\C}\setminus \overline{D(0,1/2)}}(z,\infty)}$ and $g_{\overline{\C}\setminus \overline{D(0,1/2)}}(\cdot,\infty)$ stands for the Green function of $\overline{\C}\setminus \overline{D(0,1/2)}$ at $\infty$.
		
		Upon enumerating $(L^i_n)_n$ in such a way that each $L^i_n$ is repeated enough times (finitely many), we may and shall assume that for every $i,p\in \N$ and every $n\geq 0$,
		\begin{equation}\label{eq-assumpt-log-1}
			N_p(i)+n \geq N_{i,n} \quad\text{and}\quad 2(N_p(i)+n)G_i^{\sqrt{N_p(i)+n}}\theta_{i,n}^{N_p(i)+n} \leq \frac{r_p}{(N_p(i)+n)^2}.
		\end{equation}

		\medskip
		
		Then let $E_p(i)$, $i,p\in \N$, be pairwise disjoint subsets of $\N$ with positive logarithmic densities, given by Lemma \ref{lemma-log-FHC-sets}, applied for the above family of sequences $(N_p(i))_p$, $i\in \N$. Let us denote by
		$(s_n)_n$ an increasing enumeration of $\cup_{i,p} E_p(i)$. For every $n\in \N$, we denote by $(i_n,p_n)\in \N^2$ the (unique) pair of integers such that $s_n \in E_{p_n}(i_n)$.
		
		As in the proof of Theorem \ref{FHC-Taylor-series}, the desired function $f$ will be written as a sum of polynomials $P_n$, $n\in \N$, that we shall now build by induction on $n$. Note that $s_0\in E_{p_0}(i_0)$ satisfies $s_0 \geq N_{p_0}(i_0)\geq \max\{p, N_{i_0,0}\}$.	Thus, by Corollary \ref{cor-incomplete}, there exists a polynomial $P_0$ with $\sqrt{s_0}\leq \text{val}(P_0)\leq \text{deg}(P_0)\leq s_0 $ such that 
		\begin{equation*}
			\Vert P_0 \Vert _{L^{i_0}_{0}}\leq \Vert \vp_{p_0} \Vert _{\overline{U_{i_0}}}\theta_{i_0,0}^{s_0}\quad \text{and} \quad\Vert P_0 - \vp_{p_0} \Vert _K \leq \Vert \vp_{p_0} \Vert _{\overline{U_{i_0}}}\theta_{i_0,0}^{s_0}.
		\end{equation*}
		Note that, by \eqref{eq-assumpt-log-0}, the previous inequalities become
		\begin{equation}\label{eq1-FHC-proof}
			\Vert P_0 \Vert _{L^{i_0}_{0}}\leq N_{p_0}(i_0)\theta_{i_0,0}^{s_0}\leq \frac{r_{p_0}}{s_0^2}\quad \text{and} \quad\Vert P_0 - \vp_{p_0} \Vert _K \leq N_{p_0}(i_0) \theta_{i_0,0}^{s_0}\leq \frac{r_{p_0}}{s_0^2}.
		\end{equation}
		We will now build the polynomials $P_n$, $n\geq 1$. Let us assume that $P_0,\ldots,P_{n-1}$ have been built, for some $n\geq 1$. In order to define $P_n$, we distinguish two cases. If $(i_n,p_n)=(i_{n-1},p_{n-1})$ (meaning that $s_n$ and $s_{n-1}$ both belongs to $E_{p_{n-1}}(i_{n-1})=E_{p_n}(i_{n})$), then we set $P_n = 0$. If not, we will see that we can apply  Corollary \ref{cor-incomplete} to get a polynomial $P_n$ that satisfies the following properties:
		\begin{enumerate}[(a)]
			\item \label{item1-log-FHC-proof}$\sqrt{s_n}\leq \text{val}(P_n)\leq \text{deg}(P_n)\leq s_n$;
			\item \label{item2-log-FHC-proof}$\Vert P_n \Vert_{L_{s_n-N_{p_n}(i_n)}^{i_n}}\leq \frac{r_{p_n}}{s_n^2}$;
			\item \label{item3-log-FHC-proof}$\left\Vert P_n - \left(\vp_{p_n} - \sum_{j=0}^{n-1}P_j\right)\right\Vert _{K_{i_n}} \leq \frac{r_{p_n}}{s_n^2}$.
		\end{enumerate}
		Indeed, writing $s_n=s_n-N_{p_n}(i_n) + N_{p_n}(i_n)$ with $N_{p_n}(i_n)\geq \max\{p_n,N_{i_n,0}\}$ and in view of \eqref{eq-assumpt-log-1},Corollary \ref{cor-incomplete} gives us a polynomial $P_n$ satisfying \eqref{item1-log-FHC-proof} such that the quantities
		\[
		\Vert P_n \Vert_{L_{s_n-N_{p_n}(i_n)}^{i_n}}\quad \text{and}\quad \left\Vert P_n - \left(\vp_{p_n} - \sum_{j=0}^{n-1}P_j\right)\right\Vert _{K_{i_n}}
		\]
		are both less than
		\[
		\left\Vert \vp_{p_n} - \sum_{j=0}^{n-1}P_j\right\Vert _{\overline{U_{i_n}}}\theta_{i_n,s_n-N_{p_n}(i_n)}^{s_n}.
		\]
		Let us estimate $\left\Vert \vp_{p_n} - \sum_{j=0}^{n-1}P_j\right\Vert _{\overline{U_{i_n}}}$. Observe that
		\begin{eqnarray*}
			\left\Vert \vp_{p_n} - \sum_{j=0}^{n-1}P_j\right\Vert _{\overline{U_{i_n}}} & \leq & \Vert \vp_{p_n}\Vert _{D(0,3i_n)} + \left\Vert \sum_{j=0}^{n-1}P_j\right\Vert _{\overline{U_{i_n}}}\\
			& \leq & N_{p_n}(i_n) + \left\Vert \sum_{j=0}^{n-1}P_j\right\Vert _{\overline{D(0,\frac{1}{2})}}G_{i_n}^{s_{n-1}}\\
			& \leq & N_{p_n}(i_n) +nG_{i_n}^{\sqrt{s_n}}\\
			& \leq & 2s_nG_{i_n}^{\sqrt{s_n}},
		\end{eqnarray*}
		where we have used Bernstein's lemma for the second inequality and, for the third one, the fact that $\Vert P_j \Vert _{\overline{D(0,1/2)}} \leq \Vert P_j \Vert _{L^{i_{n-1}}_{s_{n-1}-N_{p_{n-1}}(i_{n-1})}}$ and that $s_{n-1}\leq \sqrt{s_n}$. By \eqref{eq-assumpt-log-1}, we get
		\[
		\left\Vert \vp_{p_n} - \sum_{j=0}^{n-1}P_j\right\Vert _{\overline{U_{i_n}}}\theta_{i_n,s_n-N_{p_n}(i_n)}^{s_n} \leq 2s_nG_{i_n}^{\sqrt{s_n}}\theta_{i_n,s_n-N_{p_n}(i_n)}^{s_n}\leq \frac{r_{p_n}}{s_n^2},
		\]
		which gives \eqref{item2-log-FHC-proof} and \eqref{item3-log-FHC-proof}.
		
		Let us set $f=\sum_n P_n$. Observe that any compact set $L\subset \D$ is contained in all the sets $L_{s_n-N_{p_n}(i_n)}^{i_n}$, except at most finitely many of them. Now, by \eqref{item2-log-FHC-proof}, we have for any $N\in \N$,
		\[
		\sum_{n\geq N}\Vert P_n \Vert _{L_{s_n-N_{p_n}(i_n)}^{i_n}}\leq \sum _{n\geq N}\frac{r_{p_n}}{s_n^2}<\infty.
		\]
		This proves that $f\in H(\D)$. Let us now check that $f$ belongs to $FU_{\log}(\D,K_i)$ for any $i\in \N$, which will be enough to conclude, by definition of $(K_i)_i$. Let us fix $i\in \N$, $p\in \N$ and $l\in E_p(i)$. Let $m:=\min\{n\in \N:\,\{s_{n},s_{n+1},\ldots,l\}\subset E_p(i)\}$. Now, the $E_p(i)$ have been chosen in such a way that, for any $u,v\in \N$ and any $(p,i)\neq (q,j)$ such that $s_v\in E_q(i)$, $s_u\in E_p(i)$ and $s_{v}>s_{u}$, one has $s_{v}>(s_{u})^2$. Therefore, we deduce from items \eqref{item1-log-FHC-proof} and \eqref{item3-log-FHC-proof} that
		\[
		\Vert S_l(f) - \vp_p \Vert _{K_i} = \left\Vert \sum_{j=0}^{m}P_j - \vp _p\right\Vert _{K_i} \leq \frac{r_p}{s_m^2}\leq r_p,
		\]
		This completes the proof.
	\end{proof}
	
	\begin{remark}{\rm (1) Let us observe that this result is optimal in the scale of weighted densities associated to weights of the form $(k^r)_k$ with $r\geq -1$, in the sense that
			\[
			\bigcap_{K\in \MM}FU_{\alpha}(\D,K)=\emptyset,
			\]
			whenever $(k^r)_k \lesssim \alpha$ for some $r>-1$. The reason is that, if $\alpha =(k^r)_k$ for some $r>-1$, it turns out that for any set $E\subset \N$, $\dlow_{\alpha}(E)>0$ is equivalent to $\dlow(E)>0$. We refer to the introduction for the case $r\geq 0$. For the case $ r\in (-1,0)$, we first observe that if $(n_k)_k$ is an increasing enumeration of $E$, then $\dlow_{\alpha}(E)>0$ implies $\vp_{\alpha}(n_k)\leq M \vp_{\alpha}(k)$ for some $M\geq 1$. Hence, since $\vp_{\alpha}(n)$ is equivalent to $\frac{n^{r+1}}{r+1}$, we deduce that $n_k \leq M' k$ for any $k\in\N$ and some $M'\geq 1$ (see Lemma 2.10 in \cite{ErnstMouze2019}).
			
			\medskip	
			
			(2) As in Remark \ref{remark-papad}, notice that the previous proof gives us a bit more than the conclusion of Theorem \ref{thm-log-density-FUTS}. In fact it tells us that there exists $f\in H(\D)$ such that, for any $K\in\MM$ and any $p\in \N$, there exists an increasing sequence $(n_k)_k$, with positive logarithmic density, such that $S_{n_k}(f) \to \vp_p$ uniformly on $K$.
		}
	\end{remark}
	
	\medskip
	
	To conclude this section, let us observe that combining Corollary \ref{cor-incomplete-1} with the proof of Theorem \ref{thm-log-density-FUTS}, we can obtain a slight improvement of Theorem \ref{FHC-Taylor-series}. For $1<r\leq R<\infty$ and $\tau>1$, let us denote by $\MM(\tau,r,R)$ the set consisting of all compact sets $K\in \MM$, contained in the annulus $\{z\in \C:\, r\leq |z|\leq R\}$, with the property that for any  open set $U$ containing $K$,  there exist $\theta\in (0,1)$ and $N\in \N$ such that, for any polynomial $\varphi$ and any $n\geq N$, there exists a polynomial $P_n$ of the form $\sum_{k=n/\tau}^n a_kz^k$, such that
	\[
	\Vert P_n \Vert _{\D} \leq \Vert \varphi \Vert _{\overline{U}}\theta ^n\quad \text{and}\quad \Vert P_n - \varphi \Vert _K \leq \Vert \varphi \Vert _{\overline{U}}\theta ^n.
	\]
	It is clear that for any $1<r\leq R<\infty$ and any $\tau\geq 1$, the set $\MM(\tau,r,R)$ is non-empty. Moreover, it is not difficult to see that  if $K\in \MM(\tau,r,R)$, then  $ \MM(\tau,r,R)$ contains all images of $K$ by rotations centered at $0$. A combination of the above theorems allows us to obtain the following result, whose detailed proof is left to the reader.
	
	\begin{theorem}\label{thm-3-FUTS}For any $\tau \geq 2$ and any $1<r\leq R<\infty$, the set $\bigcap _{K\in \MM(\tau,r,R)}FU(\D,K)$ is dense in $H(\D)$.
	\end{theorem}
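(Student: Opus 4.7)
The plan is to transcribe the construction of Theorem~\ref{thm-log-density-FUTS} with $\tau$ held fixed throughout. Since $\tau$ does not grow with $n$, Corollary~\ref{cor-incomplete-1} (equivalently, the defining property of $\MM(\tau,r,R)$) is the relevant approximation tool, and the frequency windows of successive building blocks need only be separated by the linear factor $\tau+1$ rather than the quadratic one used in the logarithmic case; this is precisely what Lemma~\ref{lemma-FHC-sets} provides for countably many pairwise disjoint subsets of $\N$ of positive lower natural density.

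First, I would construct a countable family $(K_i)_i$ inside $\MM(\tau,r,R)$ such that every $K \in \MM(\tau,r,R)$ is contained in some $K_i$. Such an exhaustion can be arranged by taking a dense countable subfamily of $\MM(\tau,r,R)$, e.g.\ finite unions of closed disks and arcs with rational parameters lying in the open annulus $\{r<|z|<R\}$, keeping only those with connected complement and for which the fixed-$\tau$ approximation property is directly verified (using that $\MM(\tau,r,R)$ contains all rotations of its members). For each $i$ I would fix a bounded open neighbourhood $U_i$ of $K_i$ in $\C \setminus \overline{\D}$, taken small enough so that the associated $\theta_i \in (0,1)$ from the definition of $\MM(\tau,r,R)$ satisfies $G_i^{1/\tau} \theta_i < 1$, where $G_i := \sup_{z \in \overline{U_i}} e^{g_{\overline{\C}\setminus \overline{\D}}(z,\infty)}$ (this is the analogue of the enlargement of $\tau$ imposed at the outset of the proof of Theorem~\ref{FHC-Taylor-series}).

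Next, I would enumerate the countable set of pairs $(\vp_p, r_p)$, where $\vp_p$ ranges over polynomials with rational coefficients and $r_p$ over positive rationals, and apply Lemma~\ref{lemma-FHC-sets} to the index set $\N \times \N$ with $\kappa = \tau + 1$ and thresholds $N_{p,i}$ large enough to dominate the constants from the defining property of $\MM(\tau,r,R)$ for $(K_i,U_i)$ and to force $2 n (G_i^{1/\tau} \theta_i)^n \leq r_p / n^2$ for $n \geq N_{p,i}$. This yields pairwise disjoint sets $E_{p,i} \subset \N$ of positive lower natural density with $n > (\tau+1) m$ whenever $n \in E_{p,i}$, $m \in E_{q,j}$, $(p,i) \neq (q,j)$, $n > m$. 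Letting $(s_n)_n$ be the increasing enumeration of $\bigcup_{p,i} E_{p,i}$ with $s_n \in E_{p_n, i_n}$, I would build polynomials $P_n$ by induction as in Theorem~\ref{thm-log-density-FUTS}: set $P_n \equiv 0$ when $(p_n, i_n) = (p_{n-1}, i_{n-1})$, and otherwise apply the defining property of $\MM(\tau,r,R)$ to the polynomial target $\vp_{p_n} - \sum_{j<n} P_j$ on $U_{i_n}$ to get $P_n = \sum_{k=s_n/\tau}^{s_n} a_k z^k$ with the two estimates from the definition of $\MM(\tau,r,R)$. The separation $s_n > (\tau+1) s_{n-1}$ gives $\text{val}(P_n) \geq s_n/\tau > s_{n-1} \geq \deg(P_{n-1})$, so the Taylor coefficients do not overlap; a Bernstein's-lemma bound $\Vert \sum_{j<n} P_j \Vert_{\overline{U_{i_n}}} \leq \Vert \sum_{j<n} P_j \Vert_{\overline{\D}} G_{i_n}^{s_{n-1}}$, together with $s_{n-1} < s_n/\tau$ and the choice of the $N_{p,i}$, propagates the induction and ensures $f := \sum_n P_n \in A(\D) \subset H(\D)$.

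Finally, for any $K \in \MM(\tau,r,R)$ pick $i$ with $K \subset K_i$; then for each $p$ and each $l \in E_{p,i}$, the non-overlap of coefficients gives $S_l(f) = \sum_{j \leq m} P_j$ for the largest $m$ with $s_m \leq l$, and the construction yields $\Vert S_l(f) - \vp_p \Vert_K \leq \Vert S_l(f) - \vp_p \Vert_{K_i} < r_p$, whence $\dlow(\{l : \Vert S_l(f) - \vp_p \Vert_K < r_p\}) \geq \dlow(E_{p,i}) > 0$. Density of the resulting family in $H(\D)$ follows from the standard observation that $f + P$ lies in the same intersection for every polynomial $P$. The main obstacle is the exhaustion step: verifying that $\MM(\tau,r,R)$ admits a countable subfamily with the required covering property takes some care, since membership in $\MM(\tau,r,R)$ is a non-trivial approximation condition that is not obviously preserved under enlargement of the compact set. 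Once the exhaustion is in hand, the rest of the argument is a direct transcription of the proof of Theorem~\ref{thm-log-density-FUTS}, with Corollary~\ref{cor-incomplete-1} and Lemma~\ref{lemma-FHC-sets} replacing Corollary~\ref{cor-incomplete} and Lemma~\ref{lemma-log-FHC-sets}.
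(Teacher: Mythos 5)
The paper gives no detailed proof of this theorem (it is explicitly ``left to the reader''), so your proposal has to stand on its own. Its skeleton --- the inductive construction of $f=\sum_n P_n$, the non-overlap of coefficients coming from the separation $s_n>(\tau+1)s_{n-1}$, Lemma~\ref{lemma-FHC-sets} in place of Lemma~\ref{lemma-log-FHC-sets}, and the final density count --- is exactly the combination the paper advertises, and that part is transcribed correctly. But the two points you gloss over are precisely where the work is, and one of them is treated with a claim that goes in the wrong direction.

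First, the countable cofinal family. You correctly identify this as the main obstacle, but your proposed construction (rational disks and arcs that happen to lie in $\MM(\tau,r,R)$) does not address cofinality: membership in $\MM(\tau,r,R)$ is an approximation property that becomes \emph{harder} on larger sets, so covering an arbitrary $K\in\MM(\tau,r,R)$ by rational pieces produces a superset that has no reason to remain in $\MM(\tau,r,R)$ (indeed, by the Pritsker--Varga obstruction a modest enlargement can already violate the necessary condition for the fixed $\tau$). Note also that you cannot sidestep the issue by asking the approximants at stage $l\in E_{p}$ to work for all $K$ simultaneously, since $\bigcup_{K\in\MM(\tau,r,R)}K$ is essentially the whole annulus. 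Second, the condition $G_i^{1/\tau}\theta_i<1$. Shrinking $U_i$ does not achieve it: by the Bernstein--Walsh mechanism, shrinking the neighbourhood pushes $\theta_i$ \emph{towards} $1$, and $G_i=\sup_{\overline{U_i}}|z|\geq\max_{K_i}|z|\geq r>1$ no matter how small $U_i$ is. In the proof of Theorem~\ref{FHC-Taylor-series} the analogous inequality \eqref{eq-assumpt-tau-FHC} is secured by \emph{enlarging} $\tau$, which is forbidden here. The available lever is instead the separation constant of Lemma~\ref{lemma-FHC-sets}: one may take $\kappa$ much larger than $\tau+1$ (keeping positive natural lower density), so that the Bernstein factor $G_i^{s_{n-1}}\leq G_i^{s_n/\kappa}$ is beaten by $\theta_i^{s_n}$. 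But a single $\kappa$ must serve all pairs of indices (pairwise multiplicative separations tending to infinity are incompatible with positive natural lower density), so this requires $\sup_i\theta_i<1$ over the whole countable family --- a uniformity that the definition of $\MM(\tau,r,R)$ does not supply and that your write-up does not establish. Until the cofinal family is produced \emph{together with} such uniform constants, the induction does not close: with $\kappa=\tau+1$ and $G_i^{1/\tau}\theta_i\geq1$, no choice of thresholds $N_{p,i}$ makes $2n\bigl(G_i^{1/\tau}\theta_i\bigr)^{n}$ small.
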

	
	\subsection{Open problems}
	
	Several problems arise from the results discussed in the previous subsection.
	\subsubsection{$\alpha$-frequently universal Taylor series for \textit{large} $\alpha$}
	Looking at Proposition \ref{thm-non-FUTS-alpha-fast}, it is natural to wonder whether there exist compact sets supporting $\alpha$-frequently universal Taylor series for weights $\alpha$ such that $\vp_{\alpha}$ does not satisfy the $\Delta_2$ condition. Examples of such $\alpha$ are given by weights in the classes $\EE_{\varepsilon}$, $\varepsilon\in [0,1]$, $\DD_s$, $s\in \N\cup \{\infty\}$ and $\LL_l$, $l\geq 1$, see Example \ref{examples-densities}.

	\medskip
	
	In view of Theorem E and of the general strategy described to construct frequenty universal Taylor series, it is tempting to make the following conjecture.
	\begin{conjecture}For any polar compact set $K\subset \C \setminus \overline{\D}$, there are weights $\alpha$, that do not satisfy the $\Delta_2$ condition, for which the set $FU_{\alpha}(\D,K)$ is not empty.
	\end{conjecture}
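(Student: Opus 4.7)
The strategy is to combine Theorem \hyperref[thme]{E} with the general blueprint used to prove Theorem \ref{FHC-Taylor-series}. Crucially, the gap condition required on the supports of the approximating polynomials now takes the form $s_n > \tau_{s_n} s_{n-1}$ with $\tau_n \to 1$, rather than a fixed multiplicative gap $s_n > \kappa s_{n-1}$. This very mild separation should allow one to build disjoint index sets $E_p$ with positive $\alpha$-density for weights $\alpha$ growing well beyond the $\Delta_2$ regime.

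First, I would apply Theorem \hyperref[thme]{E} to $K$ to obtain sequences $(\tau_n)_n$ with $\tau_n \to 1$ and $(\eta_n)_n$ with $\eta_n \to 0$ (which I may assume non-increasing). Second, I would construct an admissible weight $\alpha$ tailored to $(\tau_n)$, by partitioning $\N$ into consecutive blocks $I_u = [a_u, b_u]$ with $a_{u+1}/b_u$ slightly larger than $\tau_{a_{u+1}}$ (just enough to meet the valuation/degree constraint of Theorem \hyperref[thme]{E}), and choosing $\alpha$ so that each $I_u$ carries a fixed positive fraction of $\vp_\alpha(b_u)$. Distributing the indices $u$ into countably many disjoint subfamilies $A_p$ of positive lower density in $\N$ would then yield sets $E_p := \bigcup_{u \in A_p} I_u$ with $\dlow_\alpha(E_p) > 0$ satisfying the required separation. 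Since the ratios $a_u/b_{u-1}$ can be made arbitrarily close to $1$, within any scale $[x, Cx]$ only finitely many blocks fit while each block contributes comparable weight, forcing $\vp_\alpha(Cx)/\vp_\alpha(x) \to \infty$ and hence the failure of $\Delta_2$.

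Third, with $(E_p)_p$ at hand, I would reproduce the inductive construction of Theorem \ref{FHC-Taylor-series}. Fix an enumeration $(\vp_p, r_p)_p$ of a dense subset of $A(K) \times \Q^+$, let $(s_n)_n$ be the increasing enumeration of $\bigcup_p E_p$, and let $p_n$ be the unique index with $s_n \in E_{p_n}$. Build $P_n$ inductively: if $p_n = p_{n-1}$, set $P_n = 0$; otherwise apply Theorem \hyperref[thme]{E} to $\vp_{p_n} - \sum_{j<n} P_j$ to produce a polynomial $P_n$ of the form $\sum_{k=s_n/\tau_{s_n}}^{s_n} a_k z^k$ whose distance to $\vp_{p_n} - \sum_{j<n} P_j$ on $K$ is bounded by $\|\vp_{p_n} - \sum_{j<n}P_j\|_{\overline U}\, \eta_{s_n}^{s_n}$. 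The gap condition $s_n/\tau_{s_n} > s_{n-1}$ guarantees non-overlap of supports, so that $S_{s_n}(f) = \sum_{j \leq n}P_j$ where $f := \sum_n P_n$. A Bernstein-lemma estimate bounds $\|\sum_{j<n}P_j\|_{\overline U}$ polynomially in $n$, which is absorbed by the super-geometric decay $\eta_{s_n}^{s_n}$, ensuring $f \in H(\D)$. On each $E_p$, the partial sums $S_{s_n}(f)$ approximate $\vp_p$ on $K$, whence $\dlow_\alpha(\{l : S_l(f) \in U_p\}) \geq \dlow_\alpha(E_p) > 0$.

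The main obstacle is the joint calibration in Step 2: the weight $\alpha$ has to be strong enough to violate $\Delta_2$ (blocks with sub-geometric growth), yet weak enough that the $E_p$'s retain positive $\dlow_\alpha$-density, while the block endpoints are constrained from below by the sequence $(\tau_n)$ inherited from $K$. The speed at which $\tau_n$ converges to $1$ is governed by the sequence $G_n$ in the proof of Theorem \hyperref[thme]{E}, itself controlled by the Green functions of $\overline\C \setminus K^{1/n}$, and thus depends finely on the potential-theoretic geometry of $K$. Establishing the existence of a suitable $\alpha$ for every polar compact set $K$ is therefore likely to require quantitative control over $(\tau_n)$ in terms of $K$, and the compatible weight $\alpha$ will itself depend on $K$.
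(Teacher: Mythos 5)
This statement is an open conjecture in the paper, not a proven theorem, and your proposal does not close the gap that the authors themselves identify as the main obstruction. The fatal step is your use of Theorem \hyperref[thme]{E} as the approximation engine in the inductive construction. Theorem \hyperref[thme]{E} only controls $\Vert P_n-\vp\Vert_K$ on the polar set $K$ (it is stated with $L=\emptyset$, and for good reason): it gives \emph{no} bound on $P_n$ on compact subsets of $\D$, nor on $\overline{U}$. Consequently (i) you cannot conclude that $f=\sum_n P_n$ converges in $H(\D)$ --- a polynomial that is tiny on a polar set can be arbitrarily large on every neighbourhood of $0$, and Bernstein's lemma is useless here because $c(K)=0$ makes the relevant Green function of $\overline{\C}\setminus K$ degenerate; and (ii) your claim that ``a Bernstein-lemma estimate bounds $\Vert\sum_{j<n}P_j\Vert_{\overline{U}}$ polynomially in $n$'' fails for the same reason: in the proof of Theorem \ref{FHC-Taylor-series} that estimate works only because the earlier $P_j$ are controlled on $\overline{\D}$, a non-polar set of capacity $1$, and that control is exactly what is missing here. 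What is needed is a \emph{simultaneous} version of Theorem \hyperref[thme]{E} in which $P_n$ is also small on $\overline{\D}$ (or on an exhaustion of $\D$) --- and this cannot be obtained by feeding $L=\overline{\D}$ into Theorem \ref{thm-BW-incomplete}, because then $K\cup L$ is non-polar, the quantity $G$ is a fixed constant rather than a sequence tending to $0$, and with $\tau_n\to1$ the factor $\bigl(M_{K\cup L}/(m_K/2)\bigr)^{n/\tau_n}$ grows geometrically while $(G\theta)^{n-\lfloor n/\tau_n\rfloor}$ decays only sub-geometrically, so the bound blows up.

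Your Step 2 (calibrating a weight $\alpha$ violating $\Delta_2$ against block-separated sets $E_p$ of positive $\dlow_\alpha$-density with gaps $n>\tau_n m$) is the part the authors regard as plausibly reachable, and your sketch of it is reasonable in spirit though entirely unquantified; but without the simultaneous approximation ingredient the construction of $f$ never gets off the ground. As written, the proposal does not prove the conjecture.
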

	Proving this conjecture would require two tasks: firstly, to obtain a \textit{simultaneous approximation} version of Theorem \hyperref[thme]{E}, that is where the polynomials $P_n$ can be chosen arbitrarily small on $\overline{\D}$; secondly, to build disjoint sets $E_p$, $p\in \N$, with $\dlow_{\alpha}(E_p)>0$ and such that, for any $(n,m)\in E_p\times E_q$ with $p\neq q$ and $n>m$, one has $n > \tau_n m $. It is not clear to us how to attack the first task, since $\overline{\D}\cup K$ is not polar, whatever the compact set $K$. The second task seems to us more reachable. A strategy could consist in adapting the construction of sets $E_p$ done in \cite{ErnstMouze2019,ErnstMouze2021} for very small weighted densities, by taking into account the above separation condition. Subsequently, this could shed some light on the following intuition: while the degree of incompleteness of the polynomial approximants is related to the geometry of the compact set $K$, the degree of incompleteness of the approximants is also linked to the extent of pairwise separations of the sets $E_n$, which itself depends on the weight of the weighted density.
	
	\subsubsection{Frequently universal Taylor series on more compact sets}A second problem comes from the fact that Theorem \ref{FHC-Taylor-series} is stated only for compact sets contained in $\C\setminus \overline{\D}$. This leads to the following:
	\begin{question}\label{question-FUTS-partialD}Do we have $FU(D,K)\neq \emptyset$ for any compact set $K\subset \C\setminus \D$, with connected complement?
	\end{question}
	The difficulty comes from the fact that, in order to build a function in $FU(\D,K)$ for a compact set $K$ with connected complement and such that $K\cap \T\neq \emptyset$, we obviously cannot use Corollary \ref{cor-incomplete-1} for $K$ and $L=\overline{\D}$, as in the proof of Theorem \ref{FHC-Taylor-series}. Rather, an idea would consist in applying inductively this latter to $K$ and $L_n$, where $(L_n)_n$ is an increasing exhaustion of $\D$, as we do in Theorem \ref{thm-log-density-FUTS}. But, doing this way, it is not clear that we can choose a constant $\tau>1$ uniformly with respect to $n$.

	\medskip
	
	Moreover, in view of Theorem \ref{thm-3-FUTS}, it is natural to wonder if it is really crucial to have a uniform control on $\tau$, as in the definition of the set $\MM(\tau)$. The proof of $\bigcap_{K\in \MM}FU(\D,K)=\emptyset$ is based in a crucial way on the fact that approximating anything by the partial sums of some $f\in H(\D)$ on compact sets with diameters that increase to $\infty$ imposes that \textit{too many} (in the sense of the upper density) partial sums of $f$ behave the same on any given compact subset of $\C$. Hence one could think that if we consider approximation on compact sets in $\MM$ that are uniformly bounded, the latter phenomenon may not occur. More precisely, let $1\leq r <R < \infty$ and let us denote by $\MM(r,R)$ the set of all polynomials in $\MM$ that are contained in the annulus $A(r,R):=\{z\in \C:\, r\leq |z|\leq R\}$. Is it true that $\bigcap _{K\in \MM(r, R)}FU(\D,K)\neq \emptyset$? Although we do not have an answer, we believe that this is not. Indeed, it seems to us reasonable to think that, in order to approximate anything in $A(K)$ for compact sets $K$ that are closer and closer to a full annulus $A(r,R)$, it is necessary to have at our disposal polynomials that are less and less incomplete. Then, building frequently universal Taylor series would require more and more spacing between the disjoint sets $E_p$, preventing them from having positive natural lower density. All in all, we make the following conjecture.
	\begin{conjecture}For any $1\leq r< R <\infty$, the set $\bigcap_{K\in \MM(r,R)}FU(\D,K)$ is empty.
	\end{conjecture}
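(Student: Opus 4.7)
The plan is to suppose that there exists $f=\sum_k a_k z^k \in \bigcap_{K\in \MM(r,R)} FU(\D,K)$ and derive a contradiction by using the Pritsker--Varga obstruction to incomplete polynomial approximation on a closed disc lying in the annulus.

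First, fix $\tau\in(1,R/r)$ and choose a closed disc $K=\overline{D(a,\rho)}\subset \{z:\,r\leq|z|\leq R\}$ with $a=(r+R)/2$ and $\rho$ slightly below $(R-r)/2$, so that $(a+\rho)/(a-\rho)=R/r>\tau$, which is equivalent to $\rho>a(\tau-1)/(\tau+1)$. Then $K\in\MM(r,R)$, and the reformulation of Pritsker--Varga's Corollary 2.3 stated after Corollary \ref{cor-incomplete} supplies a polynomial $\vp\in A(K)$ and $\delta>0$ such that
\[
\Vert P-\vp\Vert_K\geq \delta
\]
for every $n\in\N$ and every polynomial $P$ of the form $\sum_{k=\lfloor n/\tau\rfloor}^{n}a_kz^k$.

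Second, since $K\in\MM(r,R)$, the standing assumption gives $f\in FU(\D,K)$. Applying frequent universality to the open balls of radius $\delta/3$ around $\vp$ and around $0$ in $A(K)$, one obtains disjoint subsets $E,F\subset\N$ of positive natural lower density with $\Vert S_n(f)-\vp\Vert_K<\delta/3$ for $n\in E$ and $\Vert S_n(f)\Vert_K<\delta/3$ for $n\in F$. The core step is then to extract $m\in F$ and $n\in E$ with $m<n$ and $m+1\geq \lfloor n/\tau\rfloor$; for such a pair, $P:=S_n(f)-S_m(f)=\sum_{k=m+1}^{n}a_kz^k$ has valuation at least $\lfloor n/\tau\rfloor$, and by the triangle inequality $\Vert P-\vp\Vert_K<2\delta/3<\delta$, contradicting the first step.

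The main obstacle lies in this combinatorial extraction, which is surprisingly subtle for the natural density. A direct counting argument shows that $\bigcup_{n\in E}[\lfloor n/\tau\rfloor,n)$ has full lower density as soon as $\tau>1/\dlow(E)$, in which case $F$ must meet one of these intervals and the desired pair emerges. However, $E\cap F=\emptyset$ forces $\dlow(E)+\dlow(F)\leq 1$, so generically $\dlow(E)\leq 1/2$ and this naive approach only works for $\tau\geq 2$; worse, $\dlow(E)$ may be arbitrarily small with no \emph{a priori} lower bound. The Mouze--Munnier route via Lemma \ref{lemma-non-FU-alpha} does not help here because $\vp_{\alpha}(x)=x$ satisfies the $\Delta_2$ condition, so $\dup_{\alpha}\big(\bigcup_k[\lfloor n_k/C\rfloor,n_k]\big)$ need not be $1$. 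Closing this gap appears to require either (a) replacing the single target $\vp$ by a finite $\delta$-separated family of polynomials in $A(K)$, pooling the associated disjoint sets $E_i$ into a union whose lower density is close to $1$ and arguing by pigeonhole, or (b) establishing a bounded-diameter analogue of the Gehlen--Luh--M\"uller Ostrowski-gap theorem, tailored to common $K$-frequent universality on $\MM(r,R)$ rather than on all of $\MM$. Either route would constitute the substantive content of the conjecture.
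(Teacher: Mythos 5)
You should first be aware that the paper offers no proof of this statement: it is the closing conjecture of the article, motivated only by a heuristic (``it seems to us reasonable to think that\ldots''), so there is no argument of the authors to compare yours against. What you have written is an attack on an open problem, and you have correctly located where it breaks down.

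Your setup is sound: choosing $\overline{D(a,\rho)}\subset\{r\leq|z|\leq R\}$ with $(a+\rho)/(a-\rho)>\tau$ does activate the Pritsker--Varga obstruction, and reducing frequent universality on that disc to the existence of a pair $m\in F$, $n\in E$ with $\lfloor n/\tau\rfloor\leq m+1\leq n$ is exactly the mechanism of the paper's Proposition \ref{thm-non-FUTS-alpha-fast}. There, however, the extraction is supplied by Lemma \ref{lemma-non-FU-alpha}, which requires $\vp_{\alpha}$ to fail the $\Delta_2$ condition --- precisely what the natural density does not do. The gap you flag is genuine and not merely technical: take $\tau>2$, $E=\N\cap\bigcup_k[\tau^{2k},2\tau^{2k}]$ and $F=\N\cap\bigcup_k\left(2\tau^{2k},\tau^{2k+1}-2\right)$. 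Both sets have positive natural lower density (roughly $\frac{1}{\tau^2-1}$ and $\frac{\tau-2}{2(\tau^2-1)}$), they are disjoint, and $F$ avoids $\bigcup_{n\in E}[\lfloor n/\tau\rfloor-1,n]\subseteq\bigcup_k[\tau^{2k-1}-1,2\tau^{2k}]$ entirely, so no admissible pair exists. Hence the density information extracted in your second step cannot by itself force the contradiction; one would have to exploit more of the hypothesis, e.g.\ that $f$ is frequently universal for \emph{every} $K\in\MM(r,R)$ or for infinitely many targets simultaneously. Your proposed repair (a) also does not close on its own: the disjoint sets $E_i$ attached to a $\delta$-separated family of targets do satisfy $\dlow\left(\bigcup_iE_i\right)\geq\sum_i\dlow(E_i)$, but there is no a priori lower bound on the individual densities, so a finite union need not approach full density. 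The extraction problem you isolate is, in effect, the open content of the conjecture.
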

	If this conjecture was valid, it would be a strong improvement of the classical result $\bigcap_{K\in \MM}FU(\D,K)=\emptyset$.

	\bibliographystyle{amsplain}
	\bibliography{refs}
	
\end{document}